\newcommand{\Tr}{\mathop{\mathrm{Tr}}\nolimits}
\newcommand{\tr}{\mathop{\mathrm{Tr}}\nolimits}
\newcommand{\E}{\mathbb{E}}
\newcommand{\EE}{\mathbb{E}}
\newcommand{\PP}{\mathbb{P}}
\newcommand{\prob}{\mathbb{P}}
\newcommand{\e}{\mathrm{e}}
\newcommand{\RR}{\mathbb{R}}
\newcommand{\R}{\mathbb{R}}
\newcommand{\cov}{\mathop{\mathrm{Cov}}\nolimits}
\newcommand{\var}{\mathop{\mathrm{Var}}\nolimits}
\newcommand{\id}{\mathrm{Id}}
\newcommand{\eps}{\varepsilon}
\renewcommand{\epsilon}{\varepsilon}
\newcommand{\vphi}{\varphi}
\renewcommand{\phi}{\varphi}
\newcommand{\cC}{\mathcal C}
\renewcommand{\tilde}{\widetilde}
\newtheorem{theorem}{Theorem}[section]
\newtheorem{definition}[theorem]{Definition}
\newtheorem{lemma}[theorem]{Lemma}
\newtheorem{proposition}[theorem]{Proposition}
\newtheorem{corollary}[theorem]{Corollary}
\theoremstyle{definition}
\newtheorem{remark}[theorem]{Remark}
\newcommand{\cG}{\mathcal{G}}
\newcommand{\cH}{\mathcal{H}}
\newcommand{\cX}{\mathcal{X}}
\begin{document}

\title{Thin-shell bounds via parallel coupling}
\author{Boaz Klartag and Joseph Lehec}
\maketitle
\abstract{We prove that for any log-concave random vector $X$ 
in $\mathbb R^n$ with mean zero and identity covariance,
\[
\var ( |X| ) \leq \mathbb E (|X| - \sqrt{n})^2 \leq C 
\] 
where $C > 0$ is a universal constant, and 
$|X|$ denotes the Euclidean norm of the random vector $X$. 
Thus, most of the mass of $X$ is concentrated in a 
thin spherical shell, whose width is only
$C / \sqrt{n}$ times its radius. This confirms the thin-shell conjecture in high dimensional convex geometry.
Our method relies on the construction of a certain coupling between
log-affine perturbations of the law of $X$ related to Eldan's stochastic localization and to the theory of non-linear filtering.
Another ingredient is a recent breakthrough technique by Guan that was previously used in our proof of Bourgain's slicing conjecture, which is known to be implied by the thin-shell conjecture.
}

\section{Introduction}

A probability density $\rho$ in $\RR^n$ is log-concave if its support $K = \{x \in \RR^n \, ; \, \rho(x) > 0 \}$ is a convex
set, and $\log \rho$ is a concave function on $K$. 
A probability measure $\mu$ on $\RR^n$ is log-concave if it is absolutely-continuous with a log-concave density, or more generally, if it is supported in an affine subspace of $\RR^n$ and has a log-concave density in that subspace.
For example, the uniform probability measure
on any convex body in $\RR^n$ is log-concave, as well as all  Gaussian measures. The class of log-concave probability
measures is closed under convolutions, weak limits and push-forwards under linear maps, as follows from the Pr\'ekopa-Leindler
inequality (e.g. \cite[Theorem 1.2.3]{BGVV}).

\medskip
A log-concave probability measure has moments of all orders (e.g. \cite[Lemma 2.2.1]{BGVV}).
The covariance matrix of the log-concave probability measure $\mu$ is the matrix $\cov(\mu) = (\cov_{ij}(\mu))_{i,j=1,\ldots,n} \in \RR^{n \times n}$ where
$$ \cov_{ij}(\mu) = \int_{\RR^n} x_i x_j \, d \mu(x) - \int_{\RR^n} x_i \, d \mu(x) \cdot \int_{\RR^n} x_j \, d \mu(x). $$
The barycenter of $\mu$ is the vector $ \int_{\RR^n} x \, d \mu(x) \in \RR^n$. The probability measure $\mu$ is {\it centered} when its barycenter lies at the origin,
and it is  {\it isotropic} if it is centered and $$ \cov(\mu) = \id. $$
For a random vector $X$ in $\RR^n$ with law $\mu$ we denote $\cov(X) = \cov(\mu)$. 
We say that $X$ is log-concave (respectively, isotropic) if its law $\mu$ is log-concave (respectively, isotropic).
It is well-known that for any random vector $X$  with finite second moments whose support affinely spans $\RR^n$,
 there exists an affine map $T: \RR^n \rightarrow \RR^n$ such that $T(X)$ is isotopic (see, e.g. \cite[Section 2.3]{BGVV}). Our main result is the following:

\begin{theorem}\label{thm1}
Let $X$ be an isotropic, log-concave random vector in $\RR^n$. Then,
\begin{equation} \var(|X|^2) = \EE \left( |X|^2 - n \right)^2 \leq C n, \label{eq_1516} \end{equation}
where $C > 0$ is a universal constant.
\end{theorem}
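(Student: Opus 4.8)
The plan is to run Eldan's stochastic localization and read off the variance of $|X|^2$ as a time integral along the process. Let $\rho$ be the log-concave density of $X$, let $(W_t)_{t\ge 0}$ be a standard Brownian motion in $\RR^n$, and let $\mu_t$ be the random log-concave measure with density proportional to $\rho(x)\exp(\langle c_t,x\rangle-t|x|^2/2)$, where $c_t=W_t+\int_0^t a_s\,ds$, $a_t:=\int_{\RR^n}x\,d\mu_t(x)$ and $A_t:=\cov(\mu_t)$; then $d\rho_t=\rho_t\langle x-a_t,dW_t\rangle$, and equivalently $\mu_t$ is the conditional law of $X$ given the observations $Y_s=\int_0^s X\,du+W_s$ for $s\le t$. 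One has $a_0=0$, $A_0=\id$, $a_\infty$ has law $\mu$ and $A_\infty=0$. By the filtering innovation formula the process $M_t:=\EE[\,|X|^2\mid\cF_t\,]=\int_{\RR^n}|x|^2\,d\mu_t(x)$ is a martingale with $dM_t=\langle v_t,dW_t\rangle$ and $v_t:=\cov_{\mu_t}(x,|x|^2)$; since $M_0=n$ and $M_\infty\stackrel{d}{=}|X|^2$, we get $\var(|X|^2)=\EE\langle M\rangle_\infty=\EE\int_0^\infty|v_t|^2\,dt$ (all quantities are finite thanks to the crude a priori bound $\EE|X|^4\lesssim n^2$).

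Expanding $|x|^2$ about $a_t$ gives $v_t=2A_ta_t+\tau_t$, where $\tau_t:=\int_{\RR^n}(x-a_t)|x-a_t|^2\,d\mu_t(x)$ is exactly the martingale vector of $\tr(A_t)$. Hence $\var(|X|^2)\le 8\,\EE\int_0^\infty|A_ta_t|^2\,dt+2\,\EE\int_0^\infty|\tau_t|^2\,dt$. Two observations organize the rest. First, integrating $d\tr(A_t)=\langle\tau_t,dW_t\rangle-\tr(A_t^2)\,dt$ over $[0,\infty)$ gives $\int_0^\infty\tr(A_t^2)\,dt=n+\int_0^\infty\langle\tau_t,dW_t\rangle$, so that $\EE\int_0^\infty|\tau_t|^2\,dt=\var\!\big(\int_0^\infty\tr(A_t^2)\,dt\big)$: the $\tau$-term simply measures how concentrated the total decrease of $\tr(A_t)$ is. Second, centring $|x-a_t|^2$ at its $\mu_t$-mean $\tr(A_t)$ and applying Cauchy--Schwarz yields $|\tau_t|^2\le\|A_t\|_{\mathrm{op}}\cdot\var_{\mu_t}(|x-a_t|^2)$; rescaling $\mu_t$ to an isotropic measure turns the right-hand side into a variant of the very functional we are bounding, so the estimate is self-referential.

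What remains is (i) concentration --- not merely control in expectation --- for $\|A_t\|_{\mathrm{op}}$ and for $\int_0^\infty\tr(A_t^2)\,dt$, and (ii) a device to close the self-reference. A second-order It\^o computation gives $\EE A_t^{-1}\succeq(1+t)\id$, which already suggests the scale $\|A_t\|_{\mathrm{op}}\sim 1/(1+t)$; to upgrade this I would couple the stochastic localizations attached to the log-affine perturbations $\mu^{(\theta)}\propto\rho(x)e^{\langle\theta,x\rangle}$ of $\mu$, all driven by the same Brownian motion --- a ``parallel'' coupling --- so that the perturbation parameter $\theta$ is transported along the flow. Since the Jacobian of the barycenter map $\theta\mapsto b(\theta)$ is precisely the covariance, differentiating the coupled family in $\theta$ produces evolution estimates for $A_t$ and for its fluctuations in terms of data at later times, in the spirit of the recent proof of Bourgain's slicing conjecture. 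One then closes the recursion with Guan's self-improving technique, which upgrades the resulting recursive inequality for the dimensional constant $\sigma_n^2:=\sup\{\var(|X|^2)/n\}$ --- supremum over isotropic log-concave $X$ in $\RR^n$ --- all the way down to a bound $\sigma_n\le C$.

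I expect steps (i) and (ii) to be the genuine difficulty. Already the bookkeeping is tight: with the typical scales $\|A_t\|_{\mathrm{op}}\sim 1/(1+t)$ and $\tr(A_t^2)\sim n/(1+t)^2$ the bound $|\tau_t|^2\le\|A_t\|_{\mathrm{op}}\var_{\mu_t}(|x-a_t|^2)$ only barely integrates to the desired $O(\sigma_n^2\,n)$, so everything rests on genuinely controlling the large deviations of $\|A_t\|_{\mathrm{op}}$ --- essentially the obstacle that kept the thin-shell and KLS problems open --- and on arranging the bootstrap so that the self-improvement collapses to a universal constant rather than to a slowly growing function of the dimension, which is exactly what Guan's argument delivers. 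I would also expect the short-time window $t\approx 0$, where $\mu_t\approx\mu$ and no smoothing has yet occurred, to require separate treatment before being patched to the $t\gtrsim 1$ regime.
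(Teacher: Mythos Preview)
Your plan assembles the right ingredients---stochastic localization, parallel coupling of tilts, Guan's growth-regularity technique---but the way you organize them has a structural gap. Your decomposition funnels everything through the single Cauchy--Schwarz bound $|\tau_t|^2\le\|A_t\|_{\mathrm{op}}\cdot\var_{\mu_t}(|x-a_t|^2)$, which places the entire burden on the \emph{operator norm} of $A_t$. But a uniform-in-$t$ estimate of the type $\|A_t\|_{\mathrm{op}}\le C$ is essentially the KLS conjecture, not thin-shell, and is \emph{not} what Guan's method delivers; indeed the paper notes explicitly (just after~\eqref{eq_1738}) that such an operator-norm bound ``cannot be true in general''. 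What Guan's technique actually yields (Proposition~\ref{prop_vnis} and Corollary~\ref{cor_main}) is control on each eigenvalue separately: $\PP(\tau_k\le t)\le C(n/k)\exp(-t^{-\alpha})$, where $\tau_k$ is the first time the $k$-th eigenvalue reaches $3$. For $k=1$ this gives only $\EE\,\tau_1^{-2}\le C(\log n)^\beta$, so any route through $\|A_t\|_{\mathrm{op}}$ alone is stuck at a polylogarithmic bound on $\sigma_n$. Your self-referential bootstrap does not escape this: after rescaling $\mu_t$ to isotropic position the weighted thin-shell quantity you obtain still carries $\|A_t\|_{\mathrm{op}}$, and Guan's argument is not a recursion on $\sigma_n$ but a direct eigenvalue analysis via carefully tuned test functions (Lemma~\ref{lem_key}).

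The paper sidesteps the trap by first \emph{linearizing}: the Bochner-type inequality~\eqref{eq_1020} gives $\var(|X|^2)\le 4\sum_i\|x_i\|_{H^{-1}(\mu)}^2$, replacing the quadratic functional by linear ones and eliminating the self-reference entirely. The $H^{-1}$-norms are then bounded via parallel coupling by the Hilbert--Schmidt norm of the flow derivative $M_t=G'_{t,B}(0)$ (Corollary~\ref{cor_4701}), and a product-integral inequality (Proposition~\ref{prop_1813}) shows
\[
|M_t|^2\le\sum_{i=1}^n\exp\Bigl(2\int_0^t\lambda_i(s)\,ds\Bigr),
\]
which engages the \emph{entire spectrum} of $A_t$, not only its top eigenvalue. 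Guan's estimate then bounds the expectation of the $i$-th summand by $C(1+\log(n/i))^\beta$, and summing over $i$ gives $O(n)$. The conceptual step you are missing is precisely this passage from ``top eigenvalue'' to ``all eigenvalues, summed'': it is what lets the logarithmic loss at $k=1$ be absorbed by the gain at $k\approx n$.
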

 Theorem \ref{thm1} is tight, up to the value of the universal constant. Indeed, if $X$ is a standard
 Gaussian random vector in $\RR^n$ or if $X$ is distributed uniformly in
 the cube $[-\sqrt{3}, \sqrt{3}]^n \subseteq \RR^n$, then $X$ is isotropic and log-concave with
\begin{equation}\label{eq_zzz}
\var(|X|^2) = C n, 
\end{equation} 
where $C = 2$ in the Gaussian case and $C = 4/5$ in the case of the cube.
In fact, whenever $X = (X_1,\dotsc,X_n)$ is a random vector having 
i.i.d. coordinates with expectation $0$, variance $1$ and finite fourth 
moment, then $X$ is isotropic and (\ref{eq_zzz}) holds with $C = \E X_1^4 -1$. 
In a sense, Theorem \ref{thm1} shows that as far as the variance of the norm 
is concerned, log-concave random vectors behave just as well as 
random vectors having independent coordinates.
  
The inequality
$$ \EE \left( |X| - \sqrt{n} \right)^2 \leq \EE \left( |X| - \sqrt{n} \right)^2 \frac{\left( |X| + \sqrt{n} \right)^2}{n}
= \frac{1}{n} \cdot \EE (|X|^2 - n)^2  $$
shows that Theorem \ref{thm1} admits the following consequence. 
\begin{corollary} \label{cor_zzz}
If $X$ is log-concave and isotropic then 
\begin{equation}\label{abs}
\var ( \vert X\vert ) \leq \E ( \vert X\vert - \sqrt n)^2 \leq C , 
\end{equation} 
where $C$ is a universal constant. 
\end{corollary} 
In other words, the standard deviation of the norm of an isotropic 
log-concave random vector is of constant order, which is roughly 
$1/\sqrt n$ times its average. 
Moreover, one can push this concentration property a little 
further. Indeed, reverse H\"older inequalities for polynomials of a random vector distributed uniformly in a convex body were established by Bourgain \cite{bourgain}.
By using the version of these inequalities from Nazarov, Sodin and Volberg \cite{nsv} together with Theorem \ref{thm1} we obtain the following: 
\begin{corollary}  
For any isotropic, log-concave random vector $X$ in $\RR^n$ and any $t > 0$,
\begin{equation}
\PP \left( \big| \hspace{0.3pt} |X| - \sqrt{n}  \hspace{0.5pt} \big| \geq t \right) \leq \PP \left( \left| \frac{|X|^2 - n}{\sqrt{n}} \right| \geq t \right) \leq C \exp(-c \sqrt{t}).
\label{eq_1737_}
\end{equation}
\end{corollary}
Inequality (\ref{eq_1737_}) is known to be suboptimal for large values of $t$
(e.g., Paouris \cite{paouris} or \cite[Section 8.2]{KLbull}). Nevertheless,  it is a {\it thin-shell bound}, since for $1 \ll t \ll \sqrt{n}$ inequality (\ref{eq_1737_}) implies that
with overwhelming probability, the random vector $X$ belongs to the thin spherical shell $\{ x \in \RR^n \, ; \, \sqrt{n} -t \leq |x| \leq \sqrt{n} + t\}$, whose width $t$ is much smaller than its radius $\sqrt{n}$.

\medskip
The equivalence between thin-shell bounds and the Gaussian approximation property
of typical marginal distributions goes back to Sudakov \cite{sudakov} and to Diaconis and Freedman \cite{diaconis_freedman}.
See e.g. Bobkov, Chistyakov and G\"otze \cite{BCG1}
or \cite{K_euro} for more information; in particular, a thin-shell bound lies at the heart
of the proof of the central limit theorem for convex bodies.

\medskip Under convexity assumptions, thin-shell bounds in the spirit of  (\ref{eq_1737_}) were  conjectured
by Anttila, Ball and Perissinaki \cite{ABP} in the context of the central limit problem for convex bodies.
In the case where $X$ is distributed uniformly in a convex body,
the precise form of Theorem \ref{thm1}  was posed as an open problem by Bobkov and Koldobsky \cite{BobKol},
who also observed that an affirmative answer would follow from the Kannan-Lov\'asz-Simonovits (KLS) conjecture.

\medskip The thin-shell conjecture (i.e., the statement of Theorem \ref{thm1}) is sometimes referred to as the {\it variance conjecture}
and it is related  to Bourgain's slicing problem.
In fact, Eldan and Klartag \cite{EK} used the logarithmic Laplace transform and the Bourgain-Milman inequality \cite{bourgain_milman}
in order to show that the thin-shell conjecture implies an affirmative
answer to Bourgain's slicing problem. Thus, for quite some time, the thin-shell conjecture was considered ``harder'' than the slicing
problem but ``easier'' than the KLS conjecture. Bourgain's slicing problem was resolved in the affirmative in \cite{KL_slicing}
by using a recent bound by Guan \cite{guan}. Guan's technique is also a crucial ingredient in the proof of Theorem \ref{thm1} presented below.

\medskip In the case where the random vector $X$ is distributed uniformly on a suitably-scaled $\ell_p^n$-ball, the conclusion of
Theorem \ref{thm1} follows from the work of Ball and Perissinaki \cite{ball_perissinaki}. In the case
where $X$ is distributed uniformly in a convex body $K \subseteq \RR^n$ with coordinate symmetries
(i.e., $(x_1,\ldots,x_n) \in K \iff (\pm x_1,\ldots,\pm x_n) \in K$), the conclusion of Theorem \ref{thm1}
was proven in \cite{K_ptrf}. The thin-shell conjecture was proven under symmetry assumptions of various types by Barthe and Cordero-Erausquin \cite{BCE},
and for Schatten class bodies by Radke and Vritsiou \cite{RV} and 
Dadoun, Fradelizi, Gu\'edon and Zitt \cite{DFGZ}. The stronger KLS conjecture was established for Orlicz balls by Kolesnikov and Milman 
\cite{KolesMil} and  Barthe and Wolff \cite{BW}. 

\medskip In the general case, the first non-trivial upper bound for the left-hand
side of (\ref{abs}) was given in the proof of the central limit theorem for convex sets in \cite{K_clt}, which was influenced by the earlier work of Paouris \cite{paouris}.
The bound obtained was that for an isotropic, log-concave random vector $X$ in $\RR^n$,
$$ \EE \left(|X| - \sqrt{n} \right)^2 \leq \sigma_n^2 $$
with $\sigma_n \leq C \sqrt{n} / \log n$. This bound was improved
to $\sigma_n \leq C n^{2/5 + o(1)}$ in \cite{K_poly}, to $\sigma_n \leq C n^{3/8}$ in Fleury \cite{fleury}
and to $\sigma_n \leq C n^{1/3}$ in Gu\'edon and Milman \cite{guedon_milman}. Roughly speaking, the proofs of these bounds
relied on concentration of measure on the high-dimensional sphere.
 Eldan's stochastic localization was then used by Lee and Vempala \cite{LV_focs} in order to show that in fact $\sigma_n \leq C n^{1/4}$.
 Thanks to Eldan and Klartag \cite{EK}, this yielded another proof
 of the $n^{1/4}$-bound for Bourgain's slicing problem, which was the state of the art at the time, and was speculated by some to be optimal. However, the methods of Lee and Vempala were extended  in a breakthrough work by Chen \cite{chen} who came up with a clever growth regularity estimate and proved  the bound
 $$ \sigma_n \leq C \exp \left( ( \log n)^{1/2 + o(1)} \right) = n^{o(1)}. $$
 This was improved to $\sigma_n \leq C \log^4 n$ in \cite{KL} by combining Chen's work with spectral analysis, and then
 to $\sigma_n \leq C \log^{2.23} n$ in Jambulapati, Lee and Vempala
 \cite{JLV} by refining the method from \cite{KL}. The bound $\sigma_n \leq C \sqrt{\log n}$ was then obtained in \cite{K_root} by replacing the use of growth regularity estimates with an improved Lichnerowicz inequality. This inequality was then used in an extremely intricate bootstrap analysis in Guan \cite{guan}, which we discuss in great detail below, for proving $\sigma_n \leq C \log \log n$. Note that the bound $\sigma_n \leq C$ follows from our main result, see Corollary \ref{cor_zzz}. 
 
\medskip Our proof of Theorem \ref{thm1} employs an idea from the proof of the thin-shell conjecture under coordinate symmetries in \cite{K_ptrf}, which is 
also the starting point of the most recent estimates in the 
general case \cite{KL,JLV,guan}. 
Let $\mu$ be a log-concave probability measure in $\RR^n$. As in \cite{barthe_klartag}, we define the space $H^1(\mu)$ to be the collection of all functions $f \in L^2(\mu)$ with weak partial derivatives in $L^2(\mu)$,  equipped with the norm
$$ \| f \|_{H^1(\mu)} = \sqrt{ \int_{\RR^n} |f|^2 \, d \mu + \int_{\RR^n} |\nabla f|^2 \, d \mu}. $$
In particular, the space $H^1(\mu)$ contains all locally-Lipschitz functions $f \in L^2(\mu)$ with $\partial_1 f, \ldots, \partial_n f \in L^2(\mu)$.
It is proven in Barthe and Klartag \cite{barthe_klartag} that the space $\mathcal C_c^{\infty}(\RR^n)$ of smooth, compactly-supported functions in $\RR^n$, is a dense subspace of the Hilbert space $H^1(\mu)$.
For a function $f \in L^2(\mu)$ with $\int f d \mu = 0$ we define
\begin{align}
	\nonumber
	\| f \|_{H^{-1}(\mu)}
& = 	\sup \left \{ \int_{\RR^n} f g \, d \mu \, ; \, g \in H^1(\mu), \ \int_{\RR^n} |\nabla g|^2 \, d \mu \leq 1 \right \}
	\\ &= \sup \left \{ \int_{\RR^n} f g \, d \mu \, ; \, g \in \mathcal C_c^{\infty}(\RR^n), \ \int_{\RR^n} |\nabla g|^2 \, d \mu \leq 1 \right \}.
	\label{eq_1528}
\end{align}
The $H^{-1}(\mu)$-norm is related to infinitesimal Optimal Transport, see e.g. Villani \cite[Section 7.6]{Vil} or the Appendix of \cite{K_ptrf}.
It was shown
in \cite{barthe_klartag} and \cite{K_ptrf}
by using the {\it Bochner formula} that
 for any smooth function $f \in H^1(\mu)$ with $\int f d \mu = 0$ and $\int \nabla f d \mu = 0$,
\begin{equation}
\| f \|_{L^2(\mu)}^2 \leq \| \nabla f \|_{H^{-1}(\mu)}^2 :=
\sum_{i=1}^n \| \partial_i f \|_{H^{-1}(\mu)}^2. \label{eq_1020}
\end{equation}
Let us apply (\ref{eq_1020}) in the particular case where the log-concave probability measure $\mu$ is isotropic
and where $f(x) = |x|^2 - n$. In this case, $\int f d \mu = 0, \nabla f(x) = 2x$ and $\int \nabla f d \mu = 0$.
 It therefore follows from (\ref{eq_1020}) then if $X$ is a random vector with law $\mu$, then
\begin{equation}
\var(|X|^2) = \EE (|X|^2 - n)^2 \leq 4 \sum_{i=1}^n \| x_i \|^2_{H^{-1}(\mu)}. \label{eq_1028}
\end{equation}
Consequently, as in \cite{K_ptrf}, Theorem \ref{thm1} would follow from (\ref{eq_1028}) once we prove the following:

\begin{theorem} Let $\mu$ be an isotropic, log-concave probability measure in $\RR^n$. Then,
	$$ \sum_{i=1}^n \| x_i \|^2_{H^{-1}(\mu)} \leq C n, $$
where $C > 0$ is a universal constant.	\label{thm2}
\end{theorem}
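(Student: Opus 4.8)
The starting point is the transportation meaning of the $H^{-1}(\mu)$-norm behind \eqref{eq_1528}: by the Benamou--Brenier formula, $\|x_i\|_{H^{-1}(\mu)}$ equals the Wasserstein speed at the origin of the curve $\eps\mapsto\mu_\eps^{(i)}$ of \emph{log-affine perturbations} of $\mu$, where $\mu_\eps^{(i)}$ has density proportional to $e^{\eps x_i}$ with respect to $\mu$; indeed $\tfrac{d}{d\eps}\big|_{\eps=0}\mu_\eps^{(i)} = x_i\,\mu$ because $\mu$ is centered. Hence Theorem~\ref{thm2} is the statement that the squared Wasserstein speeds at $\eps = 0$ of the $n$ tilt curves $\eps\mapsto\mu_\eps^{(i)}$ sum to $O(n)$.

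To bound these speeds I would run Eldan's stochastic localization for $\mu$ --- equivalently, the nonlinear filter for a Brownian observation $dY_t = X\,dt + dB_t$ of a $\mu$-distributed signal $X$ --- and, driven by the same observation noise, run it simultaneously for each perturbed measure $\mu_\eps^{(i)}$. This synchronous, or \emph{parallel}, coupling is the heart of the matter, because a log-affine tilt intertwines with the localization dynamics: under the coupling, the localization of $\mu_\eps^{(i)}$ is, to first order in $\eps$, the log-affine $x_i$-tilt of the localized measure $\mu_t \propto \exp(\langle\theta_t,x\rangle - \tfrac t2|x|^2)\,\mu$, with barycenter $a_t$ and covariance $A_t = \cov(\mu_t)$. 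Differentiating the coupled families in $\eps$ produces a formula for the evolution along the localization of the $H^{-1}$-energy $\sum_i\|x_i - \EE_{\mu_t}x_i\|_{H^{-1}(\mu_t)}^2$; since this energy vanishes as $t\to\infty$, where $\mu_t$ degenerates to a point mass, one is led to an identity
\begin{equation*}
\sum_{i=1}^n \|x_i\|_{H^{-1}(\mu)}^2 \;=\; \EE\!\left[\,\sum_{i=1}^n \|x_i - \EE_{\mu_T}x_i\|_{H^{-1}(\mu_T)}^2\right] + \EE\!\left[\int_0^T \mathcal D_t\,dt\right]
\end{equation*}
for every $T>0$, where $\mathcal D_t$ gathers the drift contributions of the filter. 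The first term is handled by the fact that $\mu_T$ is $T$-uniformly log-concave, so by the Brascamp--Lieb inequality its Poincar\'e constant is at most $1/T$ and $\|x_i - \EE_{\mu_T}x_i\|_{H^{-1}(\mu_T)}^2 \le (A_T)_{ii}/T$; since $t\mapsto\EE[\tr A_t]$ is non-increasing with $\tr A_0 = n$, this term is at most $n/T$. Choosing $T$ of order one reduces everything to the bound $\EE\int_0^1 \mathcal D_t\,dt \le Cn$.

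This small-time estimate is where the difficulty concentrates --- a crude bound on $\mathcal D_t$ cannot work --- and it is where Guan's contribution enters. Freezing the localization at a small time $t_0$ and rescaling to isotropic position, $\mu_{t_0}$ is, conditionally, an affine image of an isotropic log-concave measure in $\RR^n$, so (by the known comparison between the thin-shell parameter and the $H^{-1}$-energy) its contribution is controlled by Guan's bound $\sigma_n \le C\log\log n$ from \cite{guan}; inserting this into the coupling identity, together with the growth-regularity control from Guan's method which keeps $A_t \preceq C\,\id$ over the window, yields a self-improving inequality for $a_n := \sup\tfrac1n\sum_i\|x_i\|_{H^{-1}(\mu)}^2$, the supremum over isotropic log-concave measures in dimension at most $n$ --- schematically $a_n \le C\bigl(1 + \varphi(n)\,a_{m(n)}\bigr)$ with $m(n) < n$ and a small gain $\varphi(n)$. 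Iterating this recursion, the delicate Guan-style bootstrap, drives the $(\log\log n)^2$ seed down to a universal constant, which is Theorem~\ref{thm2}.

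The hard part is accordingly the production and exploitation of the dissipation identity in the small-time window: quantifying how fast a log-affine perturbation is regularized by the filter before $\mu_t$ turns uniformly log-concave, and doing so sharply enough that the constant it contributes to the recursion is strictly better than the one it consumes. This is exactly where the parallel-coupling identity must be combined with Guan's growth-regularity technique, and where essentially all the technical weight resides; the remaining pieces --- the transportation description of $H^{-1}$, the intertwining of tilts with localization, the supermartingale property of $\tr A_t$, and the Brascamp--Lieb bound at time $T$ --- are comparatively routine.
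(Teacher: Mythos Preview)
Your high-level picture is right: the paper does use the transport interpretation of $\|x_i\|_{H^{-1}(\mu)}$ as an infinitesimal Wasserstein speed of the tilt curve, it does construct the parallel coupling of the stochastic-localization flows started from nearby tilts, and Guan's growth-regularity technique is indeed the crucial analytical input. But the actual mechanism is quite different from what you outline, and two of your central steps are not the route taken and are not clearly justified as written.

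First, there is no ``dissipation identity'' of the form
\[
\sum_{i=1}^n \|x_i\|_{H^{-1}(\mu)}^2 \;=\; \EE\!\left[\sum_{i=1}^n \|x_i-\EE_{\mu_T}x_i\|_{H^{-1}(\mu_T)}^2\right] + \EE\!\int_0^T \mathcal D_t\,dt,
\]
and no Brascamp--Lieb bound on a terminal $H^{-1}(\mu_T)$ term. Instead, the coupling yields directly the \emph{inequality}
\[
\sum_{i=1}^n \|x_i\|_{H^{-1}(\mu)}^2 \;\le\; \frac{1}{t^2}\,\EE\,|M_t|^2, \qquad M_t = G_{t,B}'(0),
\]
where $M_t$ solves the deterministic linear ODE $M_t' = A_t M_t$, $M_0=\id$, along each Brownian path. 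The monotonicity in $t$ (coming from $A_t \le t^{-1}\id$) is what lets one pass to the limit and identify the Wasserstein distance; there is no evolution of the $H^{-1}$-energy itself. The problem is thus reduced to a \emph{pointwise, deterministic} estimate on the product integral, namely
\[
|M_t|^2 \;\le\; \sum_{i=1}^n \exp\!\left(2\int_0^t \lambda_i(s)\,ds\right),
\]
proved by an elementary but sharp majorization argument (Lemma~\ref{lem_2230} and Proposition~\ref{prop_1813}). This inequality is the pivot of the whole proof and has no analogue in your sketch.

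Second, there is no self-improving recursion $a_n \le C(1+\varphi(n)\,a_{m(n)})$ and no dimension reduction; Guan's $\log\log n$ bound is not used as a seed. Instead one proves directly, via Guan's growth-regularity lemma applied to the family $f_{D,r}$, the tail estimate
\[
\sum_{i=1}^n \PP\bigl(\lambda_i(t\wedge\tau)\ge 3\bigr) \;\le\; Cn\,e^{-t^{-\alpha}}
\]
for every stopping time $\tau$ (Proposition~\ref{prop_vnis}). Defining $\tau_k = \inf\{t:\lambda_k(t)\ge 3\}$ one deduces $\EE\,\tau_k^{-2} \le C(1+\log(n/k))^{\beta}$, and since $\int_0^1\lambda_k(s)\,ds \le 3 - \log(\tau_k\wedge 1)$, the product-integral bound at $t=1$ gives
\[
\EE\left[\sum_{i=1}^n \exp\!\left(2\int_0^1 \lambda_i(s)\,ds\right)\right] \;\le\; C\sum_{k=1}^n\bigl(1+\log\tfrac{n}{k}\bigr)^{\beta} \;\le\; C'n.
\]
So the argument is a single direct estimate rather than a bootstrap; the ``hard part'' you locate in a recursive gain is replaced by the product-integral inequality together with the eigenvalue tail bound.
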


In order to prove Theorem \ref{thm2}, we use a different approach than 
the spectral method introduced in \cite{KL}, which is also used in \cite{JLV,guan}. Instead, we first consider, in Section \ref{sec2}, 
the family of {\it exponential tilts} or 
{\it log-affine perturbations} of the measure $\mu$, 
and we construct certain couplings between these tilts, 
which are related to Eldan's stochastic localization. 
In Section \ref{sec3} we use the optimal transport interpretation
of the $H^{-1}(\mu)$-norm as well as the log-concavity assumption, and show that these couplings allow to bound the $H^{-1}(\mu)$-norm
by the growth of the covariance process $(A_t)_{t \geq 0}$ of the stochastic localization, see Corollary \ref{cor_311}. These first two steps are the main 
novelty of the paper. 
In Section~\ref{sec4} we analyze the eigenvalues of the process $(A_t)$ by using a variant of Guan's technique. This allows us to prove in Section
 \ref{sec6} a new estimate for the covariance of the stochastic localization, Theorem \ref{thm_guantype}, 
 which, together with Corollary \ref{cor_311}, yield Theorem \ref{thm2}. 
Section~\ref{digress} is not directly relevant to the proof of the thin-shell conjecture; it is a digression on a natural stochastic process of martingale diffeomorphisms associated with the measure~$\mu$, which stems from our construction.

\medskip Our notation is fairly standard. We write $x \cdot y = \langle x, y \rangle = \sum_i x_i y_i$ for the
scalar product between $x,y \in \RR^n$, and $|x| = \sqrt{\langle x, x \rangle}$ is the Euclidean norm.
For a matrix $A \in \RR^{n \times n}$ we write $A^*$ for its transpose.
For two symmetric matrices $A, B \in \RR^{n \times n}$ we write $A \leq B$ if $B-A$ is positive
semi definite.
For $x \in \RR^n$ we write $$ x \otimes x = (x_i x_j)_{i,j=1,\ldots,n} \in \RR^{n \times n}. $$
A smooth function or a diffeomorphism are  $\mathcal C^{\infty}$-smooth, unless stated otherwise.
For a smooth map $F: \RR^n \rightarrow \RR^n$ we write $F'(x) \in \RR^{n \times n}$ for the derivative matrix of $F$ at the point $x \in \RR^n$. That is,
$\partial_v F(x) = F'(x) v$, where $\partial_v F$ is the directional derivative of $F$ in direction $v \in \RR^n$.
For a smooth function $f: \RR^n \rightarrow \RR$ we write $\nabla^2 f(x) \in \RR^{n \times n}$
for its Hessian matrix at the point $x \in \RR^n$. A map $f: \RR^n \rightarrow \RR^n$
is expanding if $|f(x) - f(y)| \geq |x-y|$ for all $x,y \in \RR^n$, and it is $L$-Lipschitz
if $$ |f(x) - f(y)| \leq L \cdot |x-y| \qquad \qquad \qquad \textrm{for all} \ x,y \in \RR^n. $$
The support of a Borel measure $\mu$ on $\RR^n$ is the closed set whose complement is the union of all open sets of zero $\mu$-measure.
We write $\overline{A}$ for the closure of the set $A \subseteq \RR^n$.
We write $C, c, C', \tilde{c}, \bar{C}$ etc. to denote various positive universal constants whose value may change from one line to the next.

\medskip {\it Acknowledgements.} We are grateful to Qingyang Guan for valuable discussions, and to Ramon van Handel and Ofer Zeitouni
for information on the theory of non-linear filtering. BK was supported by a grant from the Israel Science Foundation (ISF).

\section{Coupling of tilts}
\label{sec2}

Let $\mu$ be a compactly-supported probability measure whose support
affinely spans $\RR^n$. For $t \geq 0$ and $\theta \in \RR^n$ we consider
the {\it logarithmic Laplace transform}
$$ \Lambda_t(\theta) = \log \int_{\RR^n} \exp \left( \langle \theta , x \rangle - \frac t2 |x|^2 \right) \, d \mu (x) . $$
The logarithmic Laplace transform $\Lambda_t$ is a smooth, convex function in $\RR^n$, and its derivatives are expressed below via the probability measure
$\mu_{t,\theta}$ defined by
\begin{equation}
\label{eq_1045}
\frac{d \mu_{t,\theta}}{d\mu} (x) = \exp \left( \langle x,\theta\rangle -\frac t2 \vert x\vert^2 - \Lambda_t ( \theta ) \right).
\end{equation}
By the definition of $\Lambda_t$, the measure $\mu_{t,\theta}$
is indeed a probability measure. We abbreviate $$ \mu_{\theta} := \mu_{0,\theta}. $$ The family of measures $(\mu_\theta)_{\theta \in \RR^n}$ 
is the family of  \emph{log-affine perturbations} or 
\emph{exponential tilts} of the measure $\mu$. These measures were used in a similar context already in \cite{K_quarter}.
In this section we construct couplings between different tilts
of the measure $\mu$.
Our construction draws heavily from the theory of non-linear filtering \cite{chigansky} and Eldan's stochastic localization~\cite{eldan1}.

%
%\subsection{Parallel coupling of exponential tilts}
%
\medskip We begin by differentiating $\Lambda_t$ under the integral sign.
We see that $\nabla \Lambda_t(\theta)$ equals the barycenter of $\mu_{t,\theta}$,
which we shall denote by 
\begin{equation}
a(t,\theta) = \nabla \Lambda_t(\theta) = \int_{\RR^n} x \, d \mu_{t, \theta} (x)  \in \RR^n .  \label{eq_1329}
\end{equation}
Similarly, the second derivative coincides with the covariance
matrix, denoted by 
\begin{equation}
	A(t, \theta) = \nabla^2 \Lambda_t(\theta) = \int_{\RR^n}
	x \otimes x \, d \mu_{t,\theta} (x) - a(t, \theta) \otimes a(t, \theta) \in \RR^{n \times n}.	\label{eq_1122}
\end{equation}
The covariance matrix $A(t, \theta) \in \RR^{n \times n}$ is symmetric and positive-definite. Since $\mu$ is compactly-supported and the measures $\mu_{t,\theta}$ have the same support
as $\mu$, there exists a constant $C_\mu > 0$ depending only on $\mu$ such that for any $t\geq 0$ and $\theta \in \R^n$,
\begin{equation}\label{eq_1123_}
  |a(t, \theta)| = |\nabla \Lambda_t(\theta)| \leq C_{\mu} \end{equation}
while
\begin{equation} 
0 \leq A(t,\theta) = \nabla^2 \Lambda_t (\theta) \leq C_\mu \cdot \id.
\label{eq_727} \end{equation}
We conclude that $a(t,\cdot)= \nabla \Lambda_t(\cdot)$ is a $C_{\mu}$-Lipschitz map, i.e., 
\begin{equation}\label{eq_1123}
\vert a(t,\theta_1) - a(t,\theta_2) \vert \leq C_\mu \vert \theta_1 -\theta_2\vert , \qquad  t \geq 0 , \, \theta_1,\theta_2\in \R^n.
\end{equation}
Throughout this paper, we write $\mathcal C ( [0,\infty) , \R^n )$
for the space of all continuous paths $(w_t)_{t \geq 0}$ in $\R^n$. 
We equip this space with the
topology of uniform convergence on compact intervals, and with the
corresponding Borel $\sigma$-algebra.

\begin{lemma} \label{lem_1800}
Fix $w = (w_t)_{t \geq 0} \in \cC([0, \infty), \RR^n)$. Then
for any $x\in \R^n$ there exists a unique solution
$(\theta_t)_{t \geq 0}$ to the integral equation
\begin{equation}
\theta_t = x + w_t + \int_0^t a(s,  \theta_s) ds ,
\qquad t \geq 0.
\label{eq_1801} \end{equation}
The solution $\theta_t = \theta_t (x)$ is continuous in $(t,x) \in [0, \infty) \times \RR^n$
and is smooth in $x \in \RR^n$ for any fixed $t \geq 0$. 

\medskip Moreover, the derivative $M_t (x) = \theta_t'(x) \in \RR^{n \times n}$
of the smooth map $\theta_t: \RR^n \rightarrow \RR^n$ satisfies the following: the matrix $M_t(x)$ is continuous
in $(t,x) \in [0, \infty) \times \RR^n$ and $\mathcal C^1$-smooth in $t > 0$, and it is the unique solution of the
linear differential equation
\begin{equation} \label{eq_1802}
\begin{cases}
M_0 (x) = \id \\
\frac d{dt} M_t(x) = A(t,\theta_t(x)) M_t(x),  & \phantom{abc}t\geq 0.
\end{cases}
\end{equation}
\end{lemma}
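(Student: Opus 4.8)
\medskip\noindent\textbf{Proof plan.}
The idea is to read \eqref{eq_1801} as a non-autonomous ODE, $\dot\theta_t = \dot w_t + a(t,\theta_t)$, whose drift $a$ is bounded and globally Lipschitz in the space variable, uniformly in time, and to run the classical Picard--Lindel\"of argument together with the classical theorem on smooth dependence of solutions on initial data. It is convenient to record first that the drift is in fact jointly smooth: since $\mu$ is compactly supported, over any compact subset of $[0,\infty)\times\RR^n$ the integrand $\exp(\langle\theta,x\rangle-\tfrac t2|x|^2)$ appearing in $\Lambda_t(\theta)$ and each of its partial derivatives in $(t,\theta)$ are bounded uniformly in $x\in\mathrm{supp}(\mu)$, so differentiation under the integral sign shows that $\Lambda_t(\theta)$ is jointly $\mathcal C^\infty$ in $(t,\theta)$, and hence so are $a=\nabla_\theta\Lambda$ and $A=\nabla^2_\theta\Lambda$. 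Combined with \eqref{eq_1123_}, \eqref{eq_1123} and \eqref{eq_727}, the map $a$ is thus continuous, bounded by $C_\mu$, $C_\mu$-Lipschitz in $\theta$ uniformly in $t$, and $A=\nabla_\theta a$ satisfies $0\le A\le C_\mu\,\id$.

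\medskip For existence and uniqueness, fix $T>0$ and consider the map $\Phi$ on $\cC([0,T],\RR^n)$ sending $\theta$ to $t\mapsto x+w_t+\int_0^t a(s,\theta_s)\,ds$. Equipping $\cC([0,T],\RR^n)$ with the norm $\sup_{0\le t\le T}e^{-2C_\mu t}|\theta_t|$, the bound $|a(s,\theta_s)-a(s,\tilde\theta_s)|\le C_\mu|\theta_s-\tilde\theta_s|$ makes $\Phi$ a contraction with factor $1/2$, so $\Phi$ has a unique fixed point on $[0,T]$; since $T$ is arbitrary and two solutions coincide on the overlap of their intervals, we obtain a unique global solution $(\theta_t)_{t\ge 0}$ of \eqref{eq_1801}, visibly continuous in $t$. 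For dependence on the initial point, subtracting the integral equations for solutions started at $x$ and $x'$ gives $|\theta_t(x)-\theta_t(x')|\le|x-x'|+C_\mu\int_0^t|\theta_s(x)-\theta_s(x')|\,ds$, whence $|\theta_t(x)-\theta_t(x')|\le e^{C_\mu t}|x-x'|$ by Gr\"onwall's inequality; together with continuity in $t$ that is uniform over $x$ in compacts, this yields the joint continuity of $(t,x)\mapsto\theta_t(x)$.

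\medskip For smoothness in $x$ and the identification of $M_t$, fix $x$ and note that $t\mapsto A(t,\theta_t(x))$ is continuous and bounded in operator norm by $C_\mu$, so the linear equation \eqref{eq_1802} has a unique solution $(N_t)_{t\ge0}$, which is $\mathcal C^1$ in $t$ (only $\mathcal C^1$, since its coefficient is merely continuous in $t$, as $w$ is). To show $\theta_t$ is differentiable in $x$ with derivative $N_t$, expand, using the $\mathcal C^1$ regularity of $a$ in $\theta$ together with the estimate $|\theta_s(x+h)-\theta_s(x)|\le e^{C_\mu s}|h|$ already obtained,
\[
a(s,\theta_s(x+h))-a(s,\theta_s(x)) = A(s,\theta_s(x))\big(\theta_s(x+h)-\theta_s(x)\big) + o(|h|)
\]
uniformly for $s\in[0,T]$, subtract from this the integrated form of \eqref{eq_1802} applied to the vector $h$, integrate, and apply Gr\"onwall once more to get $|\theta_t(x+h)-\theta_t(x)-N_t h| = o(|h|)$. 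Hence $M_t(x):=\theta_t'(x)$ exists and solves \eqref{eq_1802}, and by uniqueness for linear ODEs it is the only solution; continuity of $(t,x)\mapsto M_t(x)$ follows from the continuous dependence of solutions of linear ODEs on continuously varying coefficients, and $\mathcal C^1$-regularity in $t$ from \eqref{eq_1802} itself. Higher smoothness in $x$ is then obtained by bootstrapping: since $A=\nabla_\theta a$ is $\mathcal C^\infty$ and $x\mapsto\theta_t(x)$ is now known to be $\mathcal C^1$, the coefficient $x\mapsto A(t,\theta_t(x))$ of \eqref{eq_1802} is $\mathcal C^1$ in $x$, so $x\mapsto M_t(x)$ is $\mathcal C^1$ and $x\mapsto\theta_t(x)$ is $\mathcal C^2$; iterating shows both are $\mathcal C^\infty$ in $x$.

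\medskip I do not expect a serious obstacle: essentially all of this is textbook ODE theory, and the only input beyond it is the \emph{uniform}-in-$t$ control provided by \eqref{eq_1123_}, \eqref{eq_1123} and \eqref{eq_727}, which is what allows every local estimate to be carried out on an arbitrary compact interval $[0,T]$ with non-degenerating constants, so that the local existence, uniqueness and smoothness statements patch together globally on $[0,\infty)$. The two places that call for a little care are the justification of differentiating $\Lambda_t$ under the integral sign (where compact support of $\mu$ enters) and the Gr\"onwall bookkeeping establishing $\theta_t'(x)=M_t(x)$.
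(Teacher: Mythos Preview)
Your argument is correct, though note a slip in the opening line: since $w$ is only continuous you cannot literally read \eqref{eq_1801} as $\dot\theta_t=\dot w_t+a(t,\theta_t)$. Fortunately you never actually use that formulation---your contraction-mapping and Gr\"onwall arguments work directly with the integral equation, which is fine.

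The paper takes a slightly different route: it substitutes $y_t=\theta_t-w_t$, turning \eqref{eq_1801} into a genuine ODE $\dot y_t=a(t,w_t+y_t)$ with continuous right-hand side that is smooth and globally Lipschitz in the space variable. This lets the paper invoke standard textbook results (Picard--Lindel\"of and the smooth-dependence theorems in Hartman's book) rather than redo the Picard iteration and the differentiability-via-Gr\"onwall argument by hand. Your approach is more self-contained and avoids the substitution, at the cost of reproducing standard ODE theory; the paper's is shorter but relies on the reader accepting the citations. Both are sound, and the substitution $y_t=\theta_t-w_t$ is worth knowing as the clean way to handle the non-differentiable forcing term $w_t$.
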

\begin{proof} 
Fix a continuous path $w = (w_t)_{t \geq 0}$.
Observe that $(\theta_t)$ satisfies (\ref{eq_1801})
if and only if the path $(y_t)$ given by $y_t = \theta_t - w_t$  satisfies
\begin{equation} \label{eq_1803}
y_t = x + \int_0^t a(s, w_s + y_s) \, ds , \qquad t >0 .
\end{equation}
The vector $a(t,x) \in \RR^n$ depends continuously on $(t,x) \in [0, \infty) \times \RR^n$ while $(w_t)$
is a continuous path. Hence the map $(t,x) \mapsto a(t,w_t +x) $ is
 continuous as well. An equivalent formulation
of the integral equation (\ref{eq_1803}) is that the path $(y_t)$ needs to solve the ordinary differential
equation
\begin{equation} \label{eq_1804}
\begin{cases}
y_0 = x \\
\frac d{dt} y_t = a( t  ,w_t + y_t) , \qquad t > 0 .
\end{cases}
\end{equation}
From (\ref{eq_1123_}) and (\ref{eq_1123}) we know that 
$x \mapsto a(t,w_t+x)$ is bounded and Lipschitz continuous, uniformly in $t \in [0, \infty)$.
By the Cauchy-Lipschitz theorem, which is also called the Picard-Lindel\"of theorem, equation \eqref{eq_1804} has a unique solution
(see e.g. Hartman \cite[Theorem 1.1]{hartman}). This shows that (\ref{eq_1801}) has a unique solution.

\medskip Moreover, for any fixed $t \geq 0$, the map $x\mapsto a(t,w_t+x)$ is smooth. 
A slightly more advanced version of the Picard-Lindel\"of theorem from \cite[Chapter V]{hartman}
then shows that $y_t(x)$ is continuous in $(t,x) \in [0, \infty) \times \RR^n$ and
$\mathcal C^{\infty}$-smooth in $x \in \RR^n$ for any fixed $t \geq 0$. 
Let $\theta_t = \theta_t (x)$ be the unique solution of (\ref{eq_1801}) and consider the unique solution $y_t = y_t(x)$ of (\ref{eq_1804}). Recalling that
$$
\theta_t (x) = w_t + y_t ( x ) , \qquad t \geq 0 ,\; x \in \R^n,
$$ 
we conclude that $\theta_t (x)$ is continuous in $(t,x) \in [0, \infty) \times \RR^n$
and $\mathcal C^{\infty}$-smooth in $x \in \RR^n$ for any fixed $t \geq 0$. 

\medskip Furthermore, according to
 \cite[Theorem 3.1]{hartman}, the spatial derivative $y_t'$ 
of $y_t$ is $\mathcal C^1$-smooth in $t$, and we can differentiate
(\ref{eq_1804}) with respect to $x$. The derivative $y_t'$ is the unique solution to the ordinary 
differential equation obtained by differentiating (\ref{eq_1804}) with respect to $x$,
and it is jointly continuous in $(t,x) \in [0, \infty) \times \RR^n$. Thus equation (\ref{eq_1802})
holds true and $$ M_t(x) = \theta_t'(x) = y_t'(x) \in \RR^{n \times n} $$ is
$\mathcal C^1$-smooth in $t > 0$ and continuous in $(t,x) \in [0, \infty) \times \RR^n$.
\end{proof}
\begin{definition} We denote by $G = (G_{t,w})_{t \geq 0}$ the flow associated with the integral equation (\ref{eq_1801}). That is, 
for any $t \geq 0, x\in \RR^n$ and $w \in \cC([0, \infty), \RR^n)$, the vector $G_{t,w}(x) \in \RR^n$  is the value at time $t$ of the unique solution $\theta = (\theta_t)_{t \geq 0}$ of (\ref{eq_1801}).
\label{def_1607}
\end{definition}
Next we investigate the dependence on $w$ of the flow $G$.
We let $(\mathcal F_t)_{t \geq 0}$ be the natural filtration of the coordinate
process on $\mathcal C ([0,\infty) , \R^n )$. In other words,
$\mathcal F_t$ is the smallest $\sigma$-algebra with respect to which the
map $w \mapsto w_s$ is measurable for any $s \in [0,t]$. It is
well-known that the $\sigma$-algebra generated by
$\bigcup_{t>0} \mathcal F_t$ coincides with the Borel $\sigma$-algebra
of $\mathcal C ([0,\infty), \R^n)$.
\begin{lemma} \label{lem_1300}
The map $(x,w ) \mapsto (G_ {t,w} (x))_{t\geq 0} \in \mathcal C ([0,\infty) , \R^n )$
is continuous. Moreover, for every fixed $t\geq 0$ and $x\in \R^n$,
the map $w \mapsto G_{t,w} (x)$ is $\mathcal F_t$-measurable.
\end{lemma}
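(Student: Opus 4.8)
\emph{Overview.} The plan is to prove a quantitative, locally Lipschitz form of the continuity statement by a Gr\"onwall estimate, and then to obtain the $\mathcal F_t$-measurability by analysing the Picard iteration that underlies Lemma \ref{lem_1800}.

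\emph{Continuity.} First I would fix $T > 0$ and compare two solutions $\theta_t = G_{t,w}(x)$ and $\tilde \theta_t = G_{t,\tilde w}(\tilde x)$ of (\ref{eq_1801}). Subtracting the two integral equations and invoking the uniform Lipschitz bound (\ref{eq_1123}) on $a(s,\cdot)$ gives, for $0 \le t \le T$,
$$ |\theta_t - \tilde \theta_t| \le |x - \tilde x| + \sup_{0 \le s \le T} |w_s - \tilde w_s| + C_\mu \int_0^t |\theta_s - \tilde \theta_s| \, ds . $$
Gr\"onwall's inequality then yields $\sup_{0 \le t \le T} |G_{t,w}(x) - G_{t,\tilde w}(\tilde x)| \le e^{C_\mu T}\bigl( |x - \tilde x| + \sup_{0 \le s \le T} |w_s - \tilde w_s| \bigr)$. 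Since $T$ is arbitrary and $\cC([0,\infty),\RR^n)$ carries the topology of uniform convergence on compact intervals, this shows that $(x,w) \mapsto (G_{t,w}(x))_{t \ge 0}$ is continuous — in fact locally Lipschitz — and in particular Borel measurable in $w$.

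\emph{Measurability.} To upgrade Borel measurability of $w \mapsto G_{t,w}(x)$ to $\mathcal F_t$-measurability, I would fix $t \ge 0$ and $x \in \RR^n$ and consider the Picard iterates defined by $\theta^{(0)}_s(w) = x + w_s$ and $\theta^{(k+1)}_s(w) = x + w_s + \int_0^s a(r, \theta^{(k)}_r(w)) \, dr$. By induction on $k$ one verifies simultaneously that $s \mapsto \theta^{(k)}_s(w)$ is continuous for every $w$ (the integrand is continuous in $r$, so the integral is of class $\mathcal C^1$ in $s$), and that $w \mapsto \theta^{(k)}_s(w)$ is $\mathcal F_s$-measurable for every $s \ge 0$ (true for $k = 0$ since $w \mapsto w_s$ is $\mathcal F_s$-measurable by definition). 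The uniform bound (\ref{eq_1123}) gives the standard estimate $\sup_{0 \le s \le T} |\theta^{(k+1)}_s(w) - \theta^{(k)}_s(w)| \le \frac{(C_\mu T)^k}{k!} \sup_{0 \le s \le T} |\theta^{(1)}_s(w) - \theta^{(0)}_s(w)|$, so for each $w$ the iterates converge uniformly on compact time intervals to a continuous path which, by passing to the limit in the recursion, solves (\ref{eq_1801}); by the uniqueness in Lemma \ref{lem_1800} this limit is $(G_{s,w}(x))_{s \ge 0}$. Hence $w \mapsto G_{t,w}(x)$ is a pointwise limit of $\mathcal F_t$-measurable maps and is therefore $\mathcal F_t$-measurable.

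\emph{Main point.} Given Lemma \ref{lem_1800}, neither step is deep, and I do not expect a genuine obstacle. The one place that needs care is the inductive claim that $w \mapsto \int_0^s a(r, \theta^{(k)}_r(w)) \, dr$ is $\mathcal F_s$-measurable: here I would use that $r \mapsto a(r, \theta^{(k)}_r(w))$ is continuous, so the integral is the pointwise limit of the Riemann sums $\frac{s}{m} \sum_{j=0}^{m-1} a\bigl( \tfrac{js}{m}, \theta^{(k)}_{js/m}(w) \bigr)$, each of which is a finite combination of $\mathcal F_{js/m} \subseteq \mathcal F_s$-measurable quantities because $js/m \le s$ and $a$ is continuous; everything else is the standard ODE theory already invoked in the proof of Lemma \ref{lem_1800}.
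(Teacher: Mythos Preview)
Your proof is correct. The continuity step is essentially identical to the paper's: both subtract the two integral equations, use the Lipschitz bound (\ref{eq_1123}), and apply Gr\"onwall to obtain a bound of the form
\[
|G_{t,w}(x) - G_{t,\tilde w}(\tilde x)| \le e^{C_\mu t}\Bigl( |x-\tilde x| + \sup_{s\in[0,t]} |w_s-\tilde w_s| \Bigr).
\]

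Where you diverge from the paper is in the measurability step. You run the Picard iteration, verify inductively that each iterate $w\mapsto \theta^{(k)}_t(w)$ is $\mathcal F_t$-measurable (via Riemann sums for the integral term), and then pass to the limit. This is perfectly valid. The paper instead reads the $\mathcal F_t$-measurability directly off the same Gr\"onwall estimate: taking $x=\tilde x$ in the displayed inequality shows that if $w_s=\tilde w_s$ for all $s\le t$ then $G_{t,w}(x)=G_{t,\tilde w}(x)$, so $G_{t,\cdot}(x)$ factors through the restriction $w\mapsto w|_{[0,t]}$; since this factored map inherits (Lipschitz) continuity, it is Borel, and hence $G_{t,\cdot}(x)$ is $\mathcal F_t$-measurable. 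The paper's route is shorter and avoids re-proving convergence of the Picard scheme, while yours has the merit of being entirely self-contained and not relying on the Doob--Dynkin type factorization argument.
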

\begin{proof} Since we were not able to find this result in the literature
on ordinary differential equations, we provide an ad-hoc argument.
Fix $x,\tilde x \in \R^n$ and $w,\tilde w \in \mathcal C ([0,\infty) ,\R^n)$,
and let $\theta_t = G_{t,w} (x)$ and $\tilde \theta_t = G_{t,\tilde w} (\tilde x)$. We use (\ref{eq_1801}), the triangle inequality
and the fact that $x\mapsto a(t,x)$ is $C_\mu$-Lipschitz to obtain
\[
\begin{split}
\vert \theta_t -\tilde \theta_t \vert
\leq \vert x-\tilde x \vert +
\vert  w_t - \tilde w_t \vert +
C_\mu \int_0^t  \vert \theta_s -\tilde \theta_s \vert \, ds .
\end{split}
\]
Solving this differential inequality (Gronwall's lemma) we get
\[
\vert \theta_t - \tilde \theta_t \vert
\leq \e^{C_\mu t} \vert x-\tilde x\vert
+ \vert w_t - \tilde w_t \vert + C_\mu \int_{0}^t \e^{ C_\mu (t-s) } \vert w_s - \tilde w_s \vert \, ds .
\]
This implies that for all $t \geq 0, x,\tilde x \in \R^n$ and $w,\tilde w \in \mathcal C ([0,\infty), \R^n)$,
\begin{equation}\label{eq_1301}
\vert G_{t,w} (x) - G_{t,\tilde w} (\tilde x) \vert \leq \e^{C_\mu t} \left( \vert x-\tilde x\vert
+ \sup_{s\in [0,t]} \{ \vert w_s - \tilde w_s \vert \} \right). 
\end{equation}
This inequality clearly yields the first statement of the lemma.
Moreover, it also
implies that if $w_s = \tilde w_s$ for all $s\leq t$,
then $G_{t, w}(x) = G_{t, \tilde w}(x)$. This
is a reformulation of the fact that $w \mapsto G_{t,w} (x)$ is
$\mathcal F_t$-measurable.
\end{proof}

In the course of the proof of Lemma \ref{lem_1300},
and more specifically in 
equation (\ref{eq_1301}), we actually proved the following:
\begin{lemma}\label{lem_1400}
For  $t\geq0$ and  $w\in \mathcal C ([0,\infty), \R^n)$, the map
$G_{t,w}: \RR^n \rightarrow \RR^n$ is $\e^{C_\mu t}$-Lipschitz.
\end{lemma}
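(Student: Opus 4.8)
The plan is to simply specialize inequality (\ref{eq_1301}), which was established inside the proof of Lemma \ref{lem_1300} and is valid for all $t \geq 0$, all $x, \tilde x \in \RR^n$, and all $w, \tilde w \in \mathcal C([0,\infty), \RR^n)$. Setting $\tilde w = w$ there, the term $\sup_{s \in [0,t]} \{ |w_s - \tilde w_s| \}$ vanishes, and what remains is
$$ |G_{t,w}(x) - G_{t,w}(\tilde x)| \leq \e^{C_\mu t} \, |x - \tilde x|, $$
which is exactly the statement that $G_{t,w}$ is $\e^{C_\mu t}$-Lipschitz. So there is essentially nothing new to prove; the lemma is recorded separately only because this particular consequence of (\ref{eq_1301}) will be invoked repeatedly later.

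If one prefers an argument that does not refer back to (\ref{eq_1301}), one can instead use the variational equation (\ref{eq_1802}) for the derivative matrix $M_t(x) = \theta_t'(x) = G_{t,w}'(x)$. Fix $x, v \in \RR^n$ and put $\psi(t) = |M_t(x) v|^2$. Since $\frac{d}{dt} M_t(x) = A(t, \theta_t(x)) M_t(x)$ and $A(t, \cdot)$ is symmetric, we get $\psi'(t) = 2 \langle A(t, \theta_t(x)) M_t(x) v, \, M_t(x) v \rangle \leq 2 C_\mu \, \psi(t)$ by the bound $A(t,\theta) \leq C_\mu \cdot \id$ from (\ref{eq_727}). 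As $\psi(0) = |v|^2$ because $M_0(x) = \id$, Gronwall's lemma gives $|M_t(x) v| \leq \e^{C_\mu t} |v|$ for every $v$, so $M_t(x)$ has operator norm at most $\e^{C_\mu t}$ uniformly in $x$; integrating $G_{t,w}'$ along segments then yields the asserted Lipschitz bound.

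There is no real obstacle here. The one minor technical point, present only in the second route, is that Lemma \ref{lem_1800} guarantees $M_t(x)$ to be $\mathcal C^1$ in $t$ only for $t > 0$ and merely continuous up to $t = 0$; so one runs the Gronwall estimate on $[\eps, t]$ and lets $\eps \downarrow 0$, using continuity of $t \mapsto M_t(x)$ at the origin together with $M_0(x) = \id$. The first route via (\ref{eq_1301}) avoids even this, and is the one I would actually write down.
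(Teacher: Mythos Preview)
Your proposal is correct and matches the paper's approach exactly: the paper simply records Lemma~\ref{lem_1400} as a direct consequence of inequality~(\ref{eq_1301}) from the proof of Lemma~\ref{lem_1300}, with no additional argument. Your first route (setting $\tilde w = w$ in~(\ref{eq_1301})) is precisely what the paper intends, and your second route via the variational equation~(\ref{eq_1802}) is a correct alternative that the paper does not give.
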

Next we inject randomness into the construction. 
Let $(\Omega,\mathcal F,\prob)$ be a probability space
and let $B = (B_t)_{t \geq 0}$ be a standard Brownian motion in $\RR^n$ defined
on this probability space with $B_0 = 0$. We assume that the probability space 
is sufficiently large so that there exists a standard Gaussian random variable 
defined on this space which is independent of $(B_t)$. 

\medskip Note that $B \in \cC([0, \infty), \RR^n)$ almost surely. For $x\in \R^n$ consider the stochastic process
$(\theta_t^x)_{t \geq 0}$ given by $$ \theta_t^x = G_{t,B} (x), \qquad t \geq 0. $$
By Lemma~\ref{lem_1800} and Lemma~\ref{lem_1300}, it is a continuous stochastic process,
adapted to the natural filtration of the Brownian motion $(B_t)_{t \geq 0}$.
Equation (\ref{eq_1801}) can be interpreted as a stochastic
differential equation, rewritten as
\begin{equation} 
\theta_0^x =x , \quad d \theta_t^x = d B_t + a(t,\theta_t^x) \, dt, \qquad t \geq 0.
\label{eq_1025} \end{equation}
Although the existence and uniqueness of a solution to (\ref{eq_1025}) is
guaranteed by general results on stochastic differential
equations, this is not the approach we take here. Instead, the less
sophisticated \emph{pathwise approach} provided
by Lemma \ref{lem_1800} seems more convenient for
our purposes. The process $(\theta_t^x)$ has a
a rather explicit description, as we shall see next:
\begin{proposition}
\label{prop_1824}
Fix $x \in \RR^n$, and let $X$ be a random vector with law $\mu_{x}$ that is independent of the process $(B_t)_{t \geq 0}$.
Then the process $(G_{t,B} (x))_{t\geq 0}$
has the same law as the process
\begin{equation}
( x+ B_t + t X)_{t\geq 0} .
\label{eq_1929}
\end{equation}
\end{proposition}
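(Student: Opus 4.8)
The plan is to identify the law of the process $(\theta_t^x)_{t \ge 0} = (G_{t,B}(x))_{t \ge 0}$ by a change of measure argument. On the path space, the process solves the SDE $d\theta_t^x = dB_t + a(t,\theta_t^x)\,dt$ with $\theta_0^x = x$; I want to compare this with the simpler SDE $d\xi_t = dW_t$ with $\xi_0 = x$, i.e.\ $\xi_t = x + W_t$, a Brownian motion started at $x$. By Girsanov's theorem, restricted to $\mathcal F_t$, the law of $(\theta_s^x)_{s \le t}$ is absolutely continuous with respect to Wiener measure started at $x$, with density (as a functional of the path $(\xi_s)_{s \le t}$) equal to the exponential martingale
\[
Z_t = \exp\left( \int_0^t a(s,\xi_s) \cdot dW_s - \frac12 \int_0^t |a(s,\xi_s)|^2 \, ds \right),
\]
where $W_s = \xi_s - x$. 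The boundedness of $a$ from \eqref{eq_1123_} guarantees Novikov's condition, so $Z_t$ is a genuine martingale and this is legitimate.

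The key computation is to recognize $Z_t$ explicitly. Since $a(s,\cdot) = \nabla \Lambda_s(\cdot)$ and the drift comes from the log-Laplace transform, I expect $Z_t$ to telescope: writing everything in terms of $\Lambda_s(\xi_s)$ and applying It\^o's formula to $s \mapsto \Lambda_s(\xi_s)$, using $\partial_t \Lambda_t(\theta) = -\frac12 \int |x|^2 d\mu_{t,\theta}$, $\nabla \Lambda_t = a$, and $\nabla^2 \Lambda_t = A$, one should get
\[
d\bigl(\Lambda_s(\xi_s)\bigr) = a(s,\xi_s)\cdot dW_s + \left(\partial_s \Lambda_s(\xi_s) + \tfrac12 \Tr A(s,\xi_s)\right) ds,
\]
and one checks that $\partial_s \Lambda_s(\xi_s) + \frac12 \Tr A(s,\xi_s) + \frac12 |a(s,\xi_s)|^2 = \frac12\left(\int |y|^2 d\mu_{s,\xi_s} - |a|^2\right) + \partial_s\Lambda_s + \frac12|a|^2$... more cleanly, the natural guess is
\[
Z_t = \exp\Bigl(\Lambda_t(x + W_t + \tfrac t2\,\cdot\, ??) \dots\Bigr),
\]
so instead of pushing symbols I would verify directly that
\[
Z_t = \exp\left( \Lambda_t\!\left(\textstyle x + W_t\right) - \Lambda_0(x) + \tfrac12 \int_0^t |\xi_s|^2 \, ds \cdot 0 \dots\right)
\]
— rather, the robust route is: define $\Phi_t(\xi) := \Lambda_t(\xi) - \Lambda_0(x)$ and compute $dZ_t/Z_t = a(s,\xi_s)\cdot dW_s$ and $d e^{\Phi_t}/e^{\Phi_t}$ via It\^o, matching the two to pin down $Z_t = \exp(\Lambda_t(x+W_t) - \Lambda_0(x) + \frac t2 c)$ for the right correction; the cleanest statement is that under the new measure, for a bounded functional $F$,
\[
\E\bigl[F(\theta^x_\cdot)\bigr] = \E_{\text{Wiener}}\bigl[Z_t \, F(\xi_\cdot)\bigr]
= \int_{\RR^n} \E_{\text{Wiener}}\bigl[F(x + W_\cdot)\big| \text{tilted by } e^{\langle x', \cdot\rangle}\bigr] d\mu(x') ,
\]
which after unwinding the definition \eqref{eq_1045} of $\mu_{t,\theta}$ exhibits the extra Radon–Nikodym factor $\exp(\langle x', W_t\rangle - \frac t2 |x'|^2 + \langle x',x\rangle - \Lambda_0(x))$. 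Recognizing $\frac{d\mu_x}{d\mu}(x') = e^{\langle x',x\rangle - \Lambda_0(x)}$, this says precisely that conditionally on $X = x'$ drawn from $\mu_x$, the process $(\theta^x_s)_{s}$ is a Brownian motion $x + W_s$ reweighted by $e^{\langle x', W_s\rangle - \frac s2 |x'|^2}$ — and by the Cameron–Martin theorem this reweighting is exactly the law of $(x + W_s + sx')_s$. Integrating over $x' \sim \mu_x$ gives the claimed representation \eqref{eq_1929}.

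Alternatively, and perhaps more transparently, I would avoid Girsanov entirely and argue as follows: let $X \sim \mu_x$ be independent of $B$, set $\eta_t := x + B_t + tX$, and verify that $(\eta_t)$ solves the same SDE \eqref{eq_1025} driven by a Brownian motion adapted to a suitable filtration. Compute, using the innovation/Kallianpur–Striebel formula from nonlinear filtering, that $\E[X \mid \eta_s, s \le t] = a(t, \eta_t)$: indeed conditioning the signal $X$ on the observation path $\{x + B_s + sX : s \le t\}$ is a classical Gaussian-channel estimation problem whose posterior is exactly $\mu_{t, \eta_t - ??}$, and its mean is $a(t,\cdot)$ evaluated at the natural parameter. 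Then $\tilde B_t := \eta_t - x - \int_0^t a(s,\eta_s)\,ds = B_t + tX - \int_0^t \E[X \mid \mathcal F^\eta_s]\,ds$ is an $\mathcal F^\eta$-Brownian motion (the innovations process), so $(\eta_t)$ satisfies $d\eta_t = d\tilde B_t + a(t,\eta_t)\,dt$. By the pathwise uniqueness established in Lemma~\ref{lem_1800} — more precisely, since $(G_{t,\tilde B}(x))_t$ is the unique solution of this equation driven by $\tilde B$, and it is a measurable functional of the driving path by Lemma~\ref{lem_1300} — we conclude $\eta_t = G_{t,\tilde B}(x)$ for all $t$ almost surely, and since $\tilde B$ is a standard Brownian motion, $(G_{t,\tilde B}(x))_t \overset{d}{=} (G_{t,B}(x))_t$, which is the assertion.

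The main obstacle is the computation of the posterior mean $\E[X \mid \mathcal F^\eta_t] = a(t,\eta_t)$: one must carefully check that observing the trajectory $(x + B_s + sX)_{s \le t}$ carries exactly the same information about $X$ as observing the single statistic $W_t := B_t + tX$ together with the time parameter (by a sufficiency/Brownian-bridge decomposition — conditionally on the endpoint, $(B_s + sX)_{s\le t}$ is a Brownian bridge independent of $X$), and then that the conditional law of $X$ given $W_t$ is, by Bayes' rule against the Gaussian density of $W_t$, proportional to $e^{\langle x', W_t\rangle - \frac t2 |x'|^2}\, \mu_x(dx') = e^{\langle x', x + B_t\rangle - \frac t2|x'|^2 - \Lambda_0(x)}\,\mu(dx')$, i.e.\ it is $\mu_{t, x + B_t}$, whose mean is $a(t, x+B_t)$ by \eqref{eq_1329}. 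Wait — we need $a(t,\eta_t)$, with $\eta_t = x + B_t + tX$, not $a(t, x+B_t)$; so one actually has to track that the natural parameter in the posterior shifts to $\eta_t$, which happens because $\eta_t$ is itself a function of the observed path. Sorting out this bookkeeping between $x + B_t$ and $\eta_t$ — equivalently, verifying the fixed-point structure linking the drift and the filtration — is where care is needed; everything else (adaptedness, Brownian-bridge independence, Novikov/Cameron–Martin) is routine.
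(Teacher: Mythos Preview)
Your second approach (the filtering/innovation argument) is exactly the paper's proof: set $\eta_t = x + B_t + tX$, compute the conditional mean $\E[X\mid \mathcal F^\eta_t] = a(t,\eta_t)$, form the innovation $\tilde B_t = \eta_t - x - \int_0^t a(s,\eta_s)\,ds$, verify it is a Brownian motion via L\'evy's characterization, and conclude by the pathwise uniqueness of Lemma~\ref{lem_1800}. The paper also notes that the Girsanov route works and refers to \cite[Proposition~6.7]{KLbull} for it.

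Your ``main obstacle'' at the end is a self-inflicted algebra slip, not a genuine difficulty. You defined $W_t := B_t + tX$, so when you expand $e^{\langle x', W_t\rangle}\mu_x(dx')$ using $\mu_x(dx') = e^{\langle x',x\rangle - \Lambda_0(x)}\mu(dx')$ you get the tilt parameter $W_t + x = x + B_t + tX = \eta_t$, \emph{not} $x + B_t$ as you wrote. The posterior is therefore $\mu_{t,\eta_t}$ on the nose, with mean $a(t,\eta_t)$, and there is no fixed-point bookkeeping to sort out. (The paper phrases this as $\E[X\mid X_t] = a(t, x + X_t)$ with $X_t = tX + B_t$, citing the Cameron--Martin computation; your Brownian-bridge sufficiency argument for reducing $\mathcal F^\eta_t$ to $\sigma(\eta_t)$ is a fine way to see it.)
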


\begin{proof}
In the case where $x=0$, this is proved e.g. in~\cite[Proposition 6.7]{KLbull},
using the Girsanov change of measure formula. That proof can easily
be adapted to the case of general $x \in \RR^n$, but we prefer to provide
here an alternative proof, relying on ideas from non-linear filtering theory.
For $t \geq 0$ denote
\begin{equation} \label{eq_defXt}
X_t  = t X + B_t.
\end{equation}
Let $(\mathcal G_t)_{t \geq 0}$ be the natural filtration of the process $(X_t)_{t \geq 0}$,
that is, $\mathcal G_t$ is the $\sigma$-algebra generated by the collection
of random variables $(X_s)_{0 \leq s\leq t}$. 
We think of $X_t$ (or rather $X_t/t$)
as a noisy observation of  $X$, and of the $\sigma$-algebra $\mathcal G_t$ as representing the total information available to the observer at time $t$.
A basic computation, going back to Cameron and Martin \cite{CM} in the 1940s,
and discussed in detail also in Chiganski \cite[Example 6.15]{chigansky} and in Klartag and Putterman \cite[Section 4]{KP} yields
\begin{equation}
\EE \left[ X \mid \mathcal G_t  \right]
= \EE \left[ X \mid X_t  \right] = a(t, x + X_t).
\label{eq_1136}
\end{equation}
Define
\begin{equation}  \label{eq_1709}
\tilde B_t = X_t - \int_0^t  \EE[ X | \mathcal G_s ] \, ds
 = X_t - \int_0^t a(s, x + X_s ) \, ds,
\end{equation}
and note that almost surely  $(\tilde{B}_t)_{t \geq 0} \in \cC([0,\infty), \RR^n)$.
By setting 
\[
\theta_t = x + X_t = x + tX + B_t,
\]
we may rewrite (\ref{eq_1709}) as
\begin{equation} \label{eq_1106} 
\theta_t
= x + \tilde B_t + \int_0^t a(s, \theta_s ) \, ds , \qquad \forall t \geq 0.
\end{equation}
From (\ref{eq_1106}) we see that 
\begin{equation} 
\theta_t = G_{ t , \tilde B} (x), \qquad \forall t \geq 0. 
\label{eq_1110} \end{equation}
Our goal is to prove that $( \theta_t)_{t \geq 0}$
has the same law as the process $(G_{t, B} (x))_{t \geq 0}$.
Thanks to (\ref{eq_1110}), this would follow once we prove that $(\tilde B_t)_{t \geq 0}$ coincides in law with $(B_t)_{t \geq 0}$. 

\medskip 
In other words, it suffices to prove that $(\tilde B_t)$ is a standard
Brownian motion. This is a basic result in non-linear filtering theory, in
which $(\tilde B_t)$ is called the \emph{innovation process} of $(X_t)$.
We provide the argument for completeness. Observe first that
\[
B_t-\tilde B_t = - t X + \int_0^t a(s, \theta_s ) \, ds
\]
is almost surely an absolutely-continuous function of $t$.
This already implies that
 $(B_t)$
and $(\tilde B_t)$ have the same quadratic covariation,
namely
\[
[ B]_t = [\tilde B]_t = t \cdot  \id , \quad t >0 .
\]
Recall that $(\tilde B_t)_{t \geq 0}$ is a continuous stochastic process with $\tilde{B}_0 = 0$. 
By L\'evy's characterization of the standard Brownian motion (e.g. \cite[Section 5.3.1]{legall}),
all that remains is to prove that $(\tilde B_t)$ is a martingale. 
We see from (\ref{eq_1709}) that $\tilde B_t$ is $\mathcal G_t$-measurable, and we need to prove that 
for fixed $0 \leq s \leq t$, 
\begin{equation}
\E [ \tilde B_t \mid \mathcal G_s ] = \tilde B_s.
\label{eq_1206}
\end{equation}
To this end, we recall (\ref{eq_defXt}) and (\ref{eq_1709}), and write
\begin{equation}\label{eq_ipo567}
\begin{split}
\E [ \tilde B_{t} \mid \mathcal G_{s} ]
& = \E [ B_{t} \mid \mathcal G_s ] + t \cdot \E [ X \mid \mathcal G_s ]
-\int_0^t  \E [ X \mid \mathcal G_{r\wedge s} ] \, dr \\
& = \E [ B_t \mid \mathcal G_s ] + s \cdot \E [ X \mid \mathcal G_s ]
- \int_0^s \E [ X \mid \mathcal G_r ] \, dr  ,
\end{split}
\end{equation}
where $r \wedge s = \min \{r,s \}$ and we used that for $r,s > 0$, 
\[
\E \left[ \E [ X \mid \mathcal G_{r} ] \mid \mathcal G_{s} \right]
= \E [ X \mid \mathcal G_{r \wedge s} ] .
\]
Observe that the random vector $B_t - B_s$ has mean zero and
is independent of $\mathcal G_s$, hence it also
has mean zero conditionally on $\mathcal G_s$. Consequently,
\[
\E [ B_{t} \mid \mathcal G_s ] = \E [ B_s \mid \mathcal G_s ].
\]
By substituting this back into~(\ref{eq_ipo567}), and using the fact that $X_s$ is
$\mathcal G_s$-measurable, we obtain
\[
\E [ \tilde B_t \mid \mathcal G_s ]
= \E [ X_s \mid \mathcal G_s ] - \int_0^s \E [ X \mid \mathcal G_r ] \, dr
=  X_s - \int_0^s \E [ X \mid \mathcal G_r ] \, dr  = \tilde B_s ,
\]
proving (\ref{eq_1206}). This completes the proof of the proposition.
\end{proof}
\begin{corollary} \label{cor_1000}
For any $x\in \R^n$, the random vector $G_{t,B}(x) / t$ converges
almost surely as $t$ tends to $+\infty$, and the limit has
law $\mu_x$.
\end{corollary}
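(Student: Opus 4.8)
The plan is to deduce the corollary directly from Proposition~\ref{prop_1824}, combined with the elementary fact that Brownian motion grows sublinearly. First I would record that, by Proposition~\ref{prop_1824}, if $X$ is a random vector with law $\mu_x$ that is independent of $(B_t)_{t \geq 0}$, then the two $\cC([0,\infty),\RR^n)$-valued random elements $(G_{t,B}(x))_{t \geq 0}$ and $(x + B_t + tX)_{t \geq 0}$ have the same law.

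Next I would analyze the explicit process $x + B_t + tX$. For $t > 0$,
$$ \frac{x + B_t + tX}{t} = \frac{x}{t} + \frac{B_t}{t} + X. $$
The first term tends to $0$ deterministically as $t \to \infty$, and $B_t/t \to 0$ almost surely; the latter is classical and follows, for instance, from the time-inversion principle, since $s \mapsto s B_{1/s}$ (extended by $0$ at $s=0$) is again a standard Brownian motion, whence $B_t/t$ has the same law as $B_{1/t} \to 0$ and, more to the point, equals the value at time $1/t$ of a continuous process vanishing at the origin. Therefore $(x + B_t + tX)/t \to X$ almost surely as $t \to \infty$, and this limit has law $\mu_x$.

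Finally I would transfer this conclusion back to $G_{t,B}(x)$ through the equality in law. The set $\mathcal L$ of paths $w \in \cC([0,\infty),\RR^n)$ for which $w_t/t$ has a limit as $t \to \infty$ is a Borel subset of $\cC([0,\infty),\RR^n)$, since by path-continuity it can be described by a Cauchy criterion along rational times and hence in terms of countably many coordinate evaluations; moreover $w \mapsto \lim_{t \to \infty} w_t/t$ is Borel-measurable on $\mathcal L$. Since $(G_{t,B}(x))_{t \geq 0}$ and $(x + B_t + tX)_{t \geq 0}$ induce the same probability measure on $\cC([0,\infty),\RR^n)$, and the latter lies in $\mathcal L$ almost surely with limit distributed according to $\mu_x$, the same is true of the former: $G_{t,B}(x)/t$ converges almost surely as $t \to +\infty$, with limit law $\mu_x$.

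I do not anticipate any real obstacle here; the only points needing a modicum of care are the Borel-measurability of the convergence event and of the limit functional on path space — so that almost-sure convergence may legitimately be carried across the equality in law supplied by Proposition~\ref{prop_1824} — together with the standard fact that $B_t/t \to 0$ almost surely.
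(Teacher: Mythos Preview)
Your proof is correct and follows essentially the same approach as the paper: invoke Proposition~\ref{prop_1824}, use $B_t/t \to 0$ a.s.\ to see that $(x+B_t+tX)/t \to X$, and transfer back. The paper's own proof is terser and simply asserts that ``it suffices'' to analyze the explicit process, whereas you make the measurability step (that almost-sure convergence and the law of the limit pass through an equality of path-laws) explicit.
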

\begin{proof}
By Proposition \ref{prop_1824} it suffices to show that
$(x + B_t + t X )/t$ converges almost surely and that the limit has law $\mu_x$. 
This simply follows from the fact that $B_t / t \to 0$ almost surely, and hence 
$(x + B_t + t X )/t \longrightarrow  X$ as $t \longrightarrow  \infty$, while $X$ has law $\mu_x$.
\end{proof}

Recall that a pair of random variables $X_1,X_2$ is a {\it coupling} 
of the probability measures $\nu_1, \nu_2$ if the two random variables are defined on the same 
probability space and if $X_i$ has law $\nu_i$ for $i=1,2$.
By using the same Brownian motion for different values of $x$, we
construct a coupling between  exponential tilts
of the measure $\mu$. More precisely, for every $x_1,x_2\in \R^n$,
\[
\lim_{t\to \infty} \frac{ G_{t,B} (x_1)}{t} \qquad \textrm{and} \qquad \lim_{t\to \infty} \frac{ G_{t,B} (x_2)}{t} 
\]
is a pair of random vectors in $\RR^n$ which provides a coupling of the measures $\mu_{x_1}$ and $\mu_{x_2}$.
This is called \emph{parallel coupling}, since
the infinitesimal Brownian steps of the two processes
$G_{t,B}(x_1)$ and $G_{t,B}(x_2)$ remain parallel. This
stands in contrast with the more sophisticated \emph{reflection coupling}
of Cranston and Kendall~\cite{LRog}, in which the Brownian
increments of the two processes mirror each other.

\section{A digression: martingale diffeomorphisms}
\label{digress}

Reader interested only in the solution of the
thin-shell problem may skip this section, 
in which we notice that the
above construction yields the existence of a certain stochastic process of a diffeomorphisms associated with the measure $\mu$.
Recall that $\mu$ is a compactly-supported probability measure whose support
affinely spans $\RR^n$. Write $$ K \subseteq \RR^n $$ for
the interior of the convex hull of the support of $\mu$.
The first observation is that the flow maps $(G_{t,w})_{t \geq 0}$ are diffeomorphisms of $\R^n$. 

\begin{proposition} \label{prop_1456}
For any $t \geq 0$ and any $w \in \cC([0, \infty), \RR^n)$, the map $G_{t,w}: \RR^n \rightarrow \RR^n$ is a diffeomorphism that is also an expanding map.
\end{proposition}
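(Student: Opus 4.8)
The plan is to read off both properties from the convexity and positivity that are already built into the construction. Recall from Section~\ref{sec2} that $a(t,\cdot) = \nabla \Lambda_t$ is the gradient of the smooth convex function $\Lambda_t$, hence a monotone map, $\langle a(t,\theta_1) - a(t,\theta_2),\, \theta_1 - \theta_2 \rangle \geq 0$ for all $\theta_1, \theta_2 \in \RR^n$, and that the matrix $A(t,\theta) = \nabla^2 \Lambda_t(\theta)$ appearing in the linear equation (\ref{eq_1802}) is positive semi-definite by (\ref{eq_727}). Everything below rests on these two facts together with Lemma~\ref{lem_1800}.

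\emph{$G_{t,w}$ is expanding.} Fix $x, \tilde x \in \RR^n$ and write $\theta_s = G_{s,w}(x)$, $\tilde\theta_s = G_{s,w}(\tilde x)$ and $\delta_s = \theta_s - \tilde\theta_s$. With the notation of the proof of Lemma~\ref{lem_1800} we have $\theta_s = w_s + y_s(x)$ and $\tilde\theta_s = w_s + y_s(\tilde x)$, so $\delta_s = y_s(x) - y_s(\tilde x)$: the (merely continuous) path $w$ has cancelled, and by (\ref{eq_1804}) the maps $s \mapsto y_s(x), y_s(\tilde x)$ are $\mathcal C^1$ in $s$, so $s \mapsto \delta_s$ is $\mathcal C^1$ with $\dot\delta_s = a(s,\theta_s) - a(s,\tilde\theta_s)$. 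Consequently $\frac{d}{ds}|\delta_s|^2 = 2\langle \delta_s,\, a(s,\theta_s) - a(s,\tilde\theta_s)\rangle \geq 0$ by monotonicity of $a(s,\cdot)$, hence $|\delta_s|$ is non-decreasing and $|G_{t,w}(x) - G_{t,w}(\tilde x)| = |\delta_t| \geq |\delta_0| = |x - \tilde x|$. In particular $G_{t,w}$ is injective.

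\emph{$G_{t,w}$ is a surjective local diffeomorphism.} By Lemma~\ref{lem_1800} the map $G_{t,w}$ is smooth, and its derivative $M_t(x) = \theta_t'(x)$ solves (\ref{eq_1802}). Running the previous positivity computation for $M_s(x)v$ with $v \in \RR^n$ fixed gives $\frac{d}{ds}|M_s(x)v|^2 = 2\langle M_s(x)v,\, A(s,\theta_s(x))M_s(x)v\rangle \geq 0$, whence $|M_t(x)v| \geq |M_0(x)v| = |v|$, so $M_t(x)$ is invertible for every $x$; by the inverse function theorem $G_{t,w}$ is a local diffeomorphism, in particular an open map. For surjectivity I would argue directly with the flow: given $y \in \RR^n$, solve $\dot z_s = a(s, w_s + z_s)$ on $[0,t]$ backwards from the terminal condition $z_t = y - w_t$; since $(s,z) \mapsto a(s, w_s + z)$ is bounded and globally Lipschitz in $z$ uniformly in $s$, by (\ref{eq_1123_}) and (\ref{eq_1123}), this solution exists on all of $[0,t]$ without blowing up. Setting $x := z_0$ and using uniqueness for (\ref{eq_1804}), we get $y_s(x) = z_s$ on $[0,t]$, hence $G_{t,w}(x) = w_t + y_t(x) = w_t + z_t = y$. (Equivalently, up to the translation by $w_t$ the map $G_{t,w}$ is the time-$t$ flow map of a bounded, globally Lipschitz, non-autonomous vector field, hence a diffeomorphism by standard ODE theory, its inverse being obtained by running the flow backward.) Combining the steps, $G_{t,w}$ is a smooth bijection with everywhere invertible differential, hence a diffeomorphism of $\RR^n$, and it is expanding.

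I do not expect a genuine obstacle. The two points needing a little care are that $s \mapsto \delta_s$, and likewise $s \mapsto M_s(x)v$, is $\mathcal C^1$ in $s$ although $w$ is merely continuous --- which is precisely why it matters that $w_s$ drops out of $\delta_s$ --- and that the backward ODE defining the preimage does not escape to infinity in finite time; both follow from the a priori bounds (\ref{eq_1123_}) and (\ref{eq_1123}) of Section~\ref{sec2}.
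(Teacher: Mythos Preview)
Your proof is correct and follows essentially the same route as the paper: monotonicity of $a(t,\cdot)=\nabla\Lambda_t$ gives the expansion property, and running the flow backward gives surjectivity. The only cosmetic difference is in how smoothness of the inverse is obtained: the paper applies the ODE regularity of Lemma~\ref{lem_1800} directly to the reversed equation, whereas you establish that $M_t(x)$ is invertible and invoke the inverse function theorem; both arguments are valid and of comparable length.
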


\begin{proof} One way to show that the map $G_{t,w}$
is one-to-one and onto is to observe that the integral equation (\ref{eq_1801}) can be reversed. Indeed, given $y \in \R^n$, the equation
\begin{equation}\label{eq_118}
\theta_s = y + w_s - w_t - \int_s^t a (r, \theta_r ) \, dr ,
\qquad s \in [0,t]
\end{equation}
also has a unique solution $(\theta_s)_{0 \leq s \leq t}$, for the same reasons that (\ref{eq_1801})
has a unique solution. Equation (\ref{eq_118}) is equivalent to the requirement that $\theta_t = y$ and that for $s \in [0,t]$,
$$ \theta_s = \theta_0 - w_0 + w_s + \int_0^s a(r, \theta_r) dr. $$
 It follows that $x:=\theta_0 - w_0$ is the
unique element of $\R^n$ such that $G_{t,w} (x) = y$. We have thus shown that the map $G_{t,w}: \RR^n \rightarrow \RR^n$ is invertible. 
Moreover, we know that $G_{t,w}$ is smooth by Lemma~\ref{lem_1800}, and the same argument applies to the
reversed equation~(\ref{eq_118}). Therefore the reciprocal of $G_{t,w}$
is also smooth. This shows that $G_{t,w}$ is
a diffeomorphism.

\medskip
For the expansion property, let $\theta_t^{x} = G_{t,w} (x)$ and
note that (\ref{eq_1329}) and (\ref{eq_1801}) imply that given $x_1,x_2\in \R^n$ we have
\[
\theta_t^{x_1} - \theta_t^{x_2} = x_1 - x_2 + \int_0^t \left[ \nabla \Lambda_s( \theta_s^{x_1}) - \nabla \Lambda_s( \theta_s^{x_2} ) \right] ds.
\]
Hence,
\[
\frac{d}{dt} \left\vert \theta_t^{x_1} - \theta_t^{x_2} \right\vert^2
= 2 \langle \nabla \Lambda_t( \theta_t^{x_1}) - \nabla \Lambda_t( \theta_t^{x_2} ) , \theta_t^{x_1}  - \theta_t^{x_2}  \rangle \geq 0,
\]
where the inequality simply follows from the convexity of $\Lambda_t$.
Thus $\vert  \theta_t^{x_1}  - \theta_t^{x_2}   \vert$ is a non decreasing
function of $t$. In particular
$\vert \theta_t^{x_1}  - \theta_t^{x_2}   \vert \geq \vert x_1 -x_2\vert$
and the proof is complete. 
\end{proof}
The next observation is that the flow has a semigroup property.
To formulate it we need to introduce further notation.
\begin{definition}
For $t_1,t_2\geq0$, $w\in \mathcal C ([0,\infty), \R^n)$ and $x\in \R^n$,
we let
\[
G_{t_1,t_2,w} (x) = \theta_{t_2}
\]
where $(\theta_t)_{t \geq 0}$ is the unique solution of
\[
\theta_t = x + w_t + \int_0^t a ( t_1 + s , \theta_s ) \, ds ,\qquad \forall t > 0.
\]
\end{definition}
Thus, the only difference with (\ref{eq_1801}) is that
we replace $a(s, \theta_s )$ by $a(t_1 + s , \theta_s )$
in the integral equation.
This amounts to replacing the reference measure $\mu$ by the
measure $\mu_{t_1,0}$.
\begin{lemma}[Semigroup property] \label{lem_6734}
Fix $w \in \mathcal C ( [0,\infty) , \R^n )$. Then for any $t_1,t_2\geq0$, 
\[
G_{t_1+t_2,w} = G_{t_1, t_2 , \sigma_{t_1}(w)} \circ G_{t_1,w},
\]
where $\sigma_{t_1}$ is the shift operator on $\mathcal C ( [0,\infty) , \R^n )$, defined by
\[
(\sigma_{t_1} (w))_t = w_{t+t_1} - w_{t_1} .
\]
\end{lemma}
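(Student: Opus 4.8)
The plan is to fix $x \in \RR^n$ and a continuous path $w$, set $\theta_t = G_{t,w}(x)$ for all $t \geq 0$, and then verify directly that the time-shifted path $s \mapsto \theta_{t_1 + s}$ solves the integral equation defining $G_{t_1, \cdot, \sigma_{t_1}(w)}$ with initial condition $\theta_{t_1} = G_{t_1,w}(x)$. Uniqueness of solutions then closes the argument.

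Concretely, I would first recall that by definition $\theta_t = x + w_t + \int_0^t a(s, \theta_s)\, ds$ for every $t \geq 0$. Writing $\eta_s := \theta_{t_1 + s}$ and subtracting the identity at time $t_1$ from the identity at time $t_1 + s$ gives $\eta_s - \eta_0 = (w_{t_1 + s} - w_{t_1}) + \int_{t_1}^{t_1 + s} a(r, \theta_r)\, dr$. The change of variables $r = t_1 + u$ turns the integral into $\int_0^s a(t_1 + u, \eta_u)\, du$, and by the very definition of the shift operator $w_{t_1 + s} - w_{t_1} = (\sigma_{t_1}(w))_s$. Hence $\eta_s = \eta_0 + (\sigma_{t_1}(w))_s + \int_0^s a(t_1 + u, \eta_u)\, du$ for all $s \geq 0$, which is precisely the integral equation appearing in the definition of $G_{t_1, s, \sigma_{t_1}(w)}$, with starting point $\eta_0 = \theta_{t_1} = G_{t_1, w}(x)$.

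To conclude I would invoke uniqueness: the vector field $(u, z) \mapsto a(t_1 + u, z)$ is still bounded and globally Lipschitz in $z$ uniformly in $u$ by (\ref{eq_1123_}) and (\ref{eq_1123}), so exactly as in Lemma \ref{lem_1800} the integral equation defining $G_{t_1, \cdot, \sigma_{t_1}(w)}$ has a unique solution. Therefore $\eta_s = G_{t_1, s, \sigma_{t_1}(w)}(\eta_0)$ for every $s \geq 0$; evaluating at $s = t_2$ yields $\theta_{t_1 + t_2} = G_{t_1, t_2, \sigma_{t_1}(w)}(G_{t_1, w}(x))$, i.e. $G_{t_1 + t_2, w}(x) = G_{t_1, t_2, \sigma_{t_1}(w)} \circ G_{t_1, w}(x)$, and since $x$ was arbitrary the two maps coincide.

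There is no genuine obstacle here: this is a standard ``flow composition'' identity and the only thing to watch is the bookkeeping — making sure the shifted driving noise is exactly $\sigma_{t_1}(w)$ and that the time argument of $a$ is shifted by $t_1$, which is precisely why the notation $G_{t_1, t_2, w}$ (corresponding to replacing the reference measure $\mu$ by $\mu_{t_1, 0}$) was introduced. If one prefers, the same computation can be run at the level of the ODE (\ref{eq_1804}) rather than the integral equation, but the integral form makes the change of variables most transparent.
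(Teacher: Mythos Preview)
Your proof is correct and follows essentially the same approach as the paper: both arguments hinge on the uniqueness of solutions to the integral equation. The only cosmetic difference is the direction of the verification --- you restrict the global solution $(\theta_t)$ to $[t_1,\infty)$ and check that the shifted path solves the defining equation of $G_{t_1,\cdot,\sigma_{t_1}(w)}$, whereas the paper concatenates the two pieces $(\theta_t)_{t\leq t_1}$ and $(\phi_{t-t_1})_{t>t_1}$ and checks that the resulting path solves the original equation for $G_{\cdot,w}$; either way uniqueness closes the loop.
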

\begin{proof}
Fix $x \in \R^n$, let $y = G_{t_1,w} (x)$ and $z = G_{t_2,\sigma_{t_1} (w)} (y)$. Then $y = \theta_{t_1}$ where
$(\theta_t)$ is the unique solution of
\begin{equation} \label{eq_9001}
\theta_t = x + w_t + \int_0^t a(s,\theta_s) \, ds , \qquad t \geq 0  .
\end{equation}
Similarly $z = \phi_{t_2}$ where $(\phi_{t})$ is the unique solution of
\begin{equation} \label{eq_9002}
\phi_t = y + w_{t_1+t} - w_{t_1} + \int_0^t a(t_1+s,\phi_s) \,ds , \qquad t \geq 0.
\end{equation}
Define $(\psi_t)_{t \geq 0}$ by
\[
\psi_t =
\begin{cases}
\theta_t & \text{if } t \in [0,t_1] \\
\phi_{t-t_1} & \text{if } t >  t_1 .
\end{cases}
\]
From (\ref{eq_9001}) and (\ref{eq_9002}) we  see that
$(\psi_t)$ satisfies
\[
\psi_t = x + w_t + \int_0^t a ( s, \psi_s ) \, ds , \qquad \forall t \geq 0.
\]
Therefore
\[
z = \psi_{t_1+t_2} = G_{t_1+t_2 , w} (x)  ,
\]
which is the desired result.
\end{proof}

We can parameterize the tilted measure by its barycenter rather than
by the tilt itself. The next lemma is standard (see, e.g., \cite[Lemma 2.1]{EK}), and its proof is provided
for completeness.
\begin{lemma} For any $t \geq 0$, the map $\theta \mapsto \nabla \Lambda_t(\theta)$ is a diffeomorphism from $\RR^n$ onto $K$.
\label{lem_1524}
\end{lemma}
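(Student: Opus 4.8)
The plan is to show that $\Phi_t := \nabla \Lambda_t$ is a smooth injective immersion from $\RR^n$ whose image is exactly $K$, the interior of the convex hull of the support of $\mu$. Smoothness is immediate since $\Lambda_t$ is smooth. For injectivity, I would exploit strict convexity of $\Lambda_t$: since $\mu$ (hence $\mu_{t,\theta}$) has support affinely spanning $\RR^n$, the covariance matrix $A(t,\theta) = \nabla^2 \Lambda_t(\theta)$ is strictly positive-definite for every $\theta$, so $\Lambda_t$ is strictly convex on $\RR^n$; consequently $\theta \mapsto \nabla \Lambda_t(\theta)$ is injective, by the standard fact that the gradient of a strictly convex function is injective (if $\nabla \Lambda_t(\theta_1) = \nabla \Lambda_t(\theta_2)$ then $\langle \nabla \Lambda_t(\theta_1) - \nabla \Lambda_t(\theta_2), \theta_1 - \theta_2 \rangle = 0$, forcing $\theta_1 = \theta_2$). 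Since $A(t,\theta)$ is also invertible everywhere, $\Phi_t$ is a local diffeomorphism by the inverse function theorem, so its image is open, and $\Phi_t$ is a diffeomorphism onto its image.

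It remains to identify the image with $K$. The inclusion $\Phi_t(\RR^n) \subseteq K$ is clear: $a(t,\theta) = \int x \, d\mu_{t,\theta}(x)$ is a barycenter of a probability measure supported in $\overline{K}$, and since $\mu_{t,\theta}$ is not supported on any hyperplane (its support affinely spans $\RR^n$), this barycenter lies in the interior $K$. For the reverse inclusion $K \subseteq \Phi_t(\RR^n)$, I would argue that $\Phi_t(\RR^n)$ is closed in $K$; combined with openness and connectedness of $K$, this gives surjectivity. To see closedness in $K$: suppose $\theta_j \in \RR^n$ with $a(t,\theta_j) \to y \in K$. If $(\theta_j)$ were unbounded, pass to a subsequence with $\theta_j/|\theta_j| \to u \neq 0$; then the tilt $\mu_{t,\theta_j}$ concentrates its mass on the face of $\overline{K}$ where $\langle x, u \rangle$ is maximized, so any limit of $a(t,\theta_j)$ lies on $\partial K$, contradicting $y \in K$. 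Hence $(\theta_j)$ is bounded, a subsequence converges to some $\theta_\infty$, and by continuity $a(t,\theta_\infty) = y$, so $y \in \Phi_t(\RR^n)$.

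The main obstacle is the surjectivity argument — specifically making rigorous the claim that as $\theta$ escapes to infinity in direction $u$, the barycenter $a(t,\theta)$ is pushed to the boundary of $K$. One clean way to handle this is via the convex-conjugate (Legendre transform) $\Lambda_t^*$: one shows $\Lambda_t^*$ is finite precisely on the convex hull of the support, essentially smooth, and that $\nabla \Lambda_t^*$ inverts $\nabla \Lambda_t$ on the interior $K$; this is the Rockafellar theory of Legendre-type convex functions. Alternatively, since the excerpt already sets up the flow, one could cite \cite[Lemma 2.1]{EK} directly as the statement indicates; I would instead include the short self-contained argument above, as the boundedness-of-$\theta_j$ step is elementary given compact support of $\mu$. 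The only genuinely delicate point is verifying that the limiting tilt concentrates on a proper face, which follows from a routine Laplace-type estimate: writing $m = \sup_{x \in \overline{K}} \langle x, u \rangle$, the mass $\mu_{t,\theta_j}$ gives to $\{\langle x, u\rangle \le m - \delta\}$ decays exponentially relative to a neighborhood of the maximizing face, so the barycenter converges into that face, which lies in $\partial K$.
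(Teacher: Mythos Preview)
Your proposal is correct. The injectivity and local-diffeomorphism part, as well as the inclusion $\Phi_t(\RR^n)\subseteq K$, match the paper's proof essentially verbatim (the paper argues $\nabla\Lambda_t(\RR^n)\subseteq L$ and then uses openness of the image to land in the interior $K$, which is equivalent to your ``barycenter of a non-degenerate measure'' argument).

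The surjectivity step is where you and the paper diverge. The paper invokes Rockafellar's theory of Legendre-type functions: it identifies $\nabla\Lambda_t(\RR^n)$ with the interior of the domain of the conjugate $\Lambda_t^*$, and then shows $\Lambda_t^*(\xi)<\infty$ for every $\xi\in K$ by a contradiction argument based on the asymptotic $\Lambda_t(\theta_m)/|\theta_m|\to \sup_{x\in L_0}\langle x,v\rangle$. You instead run a purely topological open-and-closed argument in the connected set $K$, proving closedness via the Laplace-type concentration of the tilts on the maximizing face as $|\theta_j|\to\infty$. Both routes rest on the same asymptotic phenomenon (the tilt escapes to the boundary as $|\theta|\to\infty$ along a fixed direction), but yours is more self-contained in that it avoids citing \cite[Theorem 26.5]{roc}, at the cost of having to carry out the concentration estimate for the barycenter directly; the paper's route is shorter once that reference is accepted, since the limit of $\Lambda_t(\theta_m)/|\theta_m|$ is slightly easier to compute than the limit of the barycenter. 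You correctly flag the one delicate point (the Laplace estimate); note that compactness of the support and the fact that the supremum $m=\sup_{x\in L_0}\langle x,u\rangle$ is attained on the closed support make this routine.
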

\begin{proof}
Abbreviate $\Lambda =\Lambda_t$. 
Recall from (\ref{eq_1122}) that the Hessian matrix $\nabla^2 \Lambda(\theta) \in \RR^{n \times n}$ is the covariance matrix of a probability measure whose support spans $\RR^n$,
and is consequently a symmetric, positive definite matrix. Hence the smooth convex function 
$\Lambda: \RR^n \rightarrow \RR$ is in fact strongly convex. This already implies that
$\nabla \Lambda$ is a diffeomorphism from $\R^n$ onto
its image $\nabla \Lambda (\R^n)$ which is necessarily an open set, see e.g. \cite[section 26]{roc}.
It remains to
prove that $$ \nabla \Lambda (\R^n) = K. $$
To this end, let $L_0 \subseteq \RR^n$ be the support of $\mu$, and let $L \subseteq \RR^n$ be the  convex hull of $L_0$.
Recall that $K \subseteq \RR^n$ is the interior of $L$. 
From (\ref{eq_1329}) we see that for any $\theta \in \RR^n$, the vector $\nabla \Lambda(\theta)$ is the barycenter of a probability measure supported in the compact set $L_0$,
and thus belongs to its convex hull $L$. However, $\nabla \Lambda (\R^n)$ is an open set 
and hence it is contained in the interior of $L$. We have thus shown that $$ \nabla \Lambda (\R^n) \subseteq K. $$ For the converse inclusion
we use duality. The open set $\nabla \Lambda (\R^n)$
coincides with the interior of the domain of the Legendre conjugate of
$\Lambda$, denoted by $\Lambda^*$
(see e.g. \cite[Theorem 26.5]{roc}). It thus suffices to show that $K$
is contained in the domain of $\Lambda^*$.
In other words, we need to prove  that for any $\xi \in K$,
\[
\Lambda^* (\xi) = \sup_{\theta\in \R^n} \left[ \langle \theta ,\xi \rangle
 - \Lambda ( \xi ) \right] < + \infty.
\]
Let $\xi \in K$ and suppose by contradiction that
$\Lambda^* ( \xi ) = +\infty$.
Then there exists a sequence $\theta_1,\theta_2,\ldots \in \RR^n$ such that
\begin{equation}\label{eq_1525}
\lim_{m\to \infty}  \left[ \langle \xi , \theta_m \rangle - \Lambda (\theta_m) \right] = +\infty.
\end{equation}
Necessarily $r_m: =\vert \theta_m \vert \longrightarrow  \infty$,
and by passing to a subsequence if needed,
we may assume that $\theta_m / r_m$ converges to some unit
vector $v \in \RR^n$. 

\medskip The crucial observation is that $\Lambda (\theta_m ) /r_m$
converges to the essential supremum (with respect to $\mu$) of
the map $x\mapsto\langle x,v\rangle$. This follows from the definition of the logarithmic 
Laplace transform and a simple limiting argument. 
By the definition of the support of $\mu$,
this essential supremum coincides with $ \sup_{x\in L_0} \langle x , v\rangle$. Consequently,
\begin{equation}
\lim_{m \rightarrow \infty} \Lambda (\theta_m ) /r_m = \sup_{x\in L_0} \langle x , v\rangle.
\label{eq_1356}
\end{equation}
We know that $\langle  \xi  , \theta_m \rangle /r_m \to \langle \xi, v \rangle$
while $r_m \longrightarrow +\infty$. Thus, from (\ref{eq_1525}) and (\ref{eq_1356}), 
\[
\langle \xi , v \rangle \geq \sup_{x\in L_0} \langle x , v \rangle
 = \sup_{x\in L} \langle x , v\rangle.
\]
Thus the linear map $x\mapsto \langle x , v\rangle$ attains its maximum
on $L$ at the point $\xi \in K$. This contradicts the fact that $\xi \in K$ where $K$ is the interior of the compact, convex set $L$.
\end{proof}

By specifying Lemma \ref{lem_1524} to the case $t=0$, we see  that for any $\xi\in K$, there exists a unique $\theta\in \R^n$ for which the corresponding exponential
tilt $\mu_\theta$ has its barycenter at the point~$\xi$. 

\begin{definition}
For  $w \in \mathcal C ( [0,\infty) , \R^n)$ and $t\geq0$ define 
\[
S_{t,w} = \nabla \Lambda_t \circ G_{t,w} \circ \nabla \Lambda_0^{-1} .
\]
More generally, for $t_1,t_2 \geq 0$ we set
\[
S_{t_1,t_2,w} = \nabla \Lambda_{t_1+t_2} \circ G_{t_1,t_2,w} \circ \nabla \Lambda_{t_1}^{-1} .
\] \label{def_1942}
\end{definition}

Recall that $B=(B_t)_{t \geq 0}$ is a standard
Brownian motion in $\R^n$ with $B_0 = 0$. Consider the family of random maps $(S_t)_{t \geq 0}$ given by
\[
S_t = S_{t,B}  , \qquad \forall t \geq 0.
\]
Let us also define $S_{t_1,t_2} = S_{t_1,t_2,B}$.
The properties of the stochastic process $(S_t)_{t \geq 0}$  are summarized in the next theorem.
\begin{theorem} Let $\mu$ be a compactly-supported probability measure whose support
affinely spans $\RR^n$. Write $ K \subseteq \RR^n $ the interior of the convex hull of the support of $\mu$.
Then,
\begin{enumerate}[(a)]
\item Almost surely, for all $t \geq 0$ the random map $S_t: K \rightarrow K$ is a diffeomorphism, and $S_0 = \id$.
\item (Martingale property) For any fixed $\xi \in K$, the
process $(S_t( \xi ))_{t \geq 0}$ is a martingale. Moreover, the limit 
$$ S_{\infty}(\xi) := \lim_{t\to \infty} S_t (\xi) $$
exists almost surely, and the law of $S_{\infty}(\xi)$ is the unique exponential tilt of $\mu$ having its barycenter at the point $\xi \in K$.
\item (Markov property) For any fixed $\xi\in K$, the process
$(S_t (\xi))_{t \geq 0}$ is a time-inhomogeneous Markov process.
More precisely, for any $t_1,t_2 \geq 0$ and 
a bounded, continuous function $f \colon K \to \R$, we have
\begin{equation}\label{eq_3430}
\E [ f ( S_{t_1+t_2} (\xi) ) \mid \mathcal F_{t_1} ] = P_{t_1,t_2} f ( S_{t_1} (\xi )),
\end{equation}
where $(\mathcal F_t)$ is the natural filtration of $(B_t)$ and where
the operator $P_{t_1,t_2}$ is defined by
$$
P_{t_1,t_2}  f ( \xi ) = \E f( S_{t_1,t_2} ( \xi )) .
$$
\end{enumerate}
\label{prop_1623}
\end{theorem}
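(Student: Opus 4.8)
The plan is to deduce each of the three items from the machinery developed in Sections 2 and the present section, essentially by unwinding the definitions and transporting the already-established properties of $G_{t,w}$ and of the process $\theta^x_t = G_{t,B}(x)$ through the fixed diffeomorphisms $\nabla\Lambda_t$. For part (a), I would note that $S_0 = \nabla\Lambda_0 \circ G_{0,B} \circ \nabla\Lambda_0^{-1} = \id$ since $G_{0,w} = \id$ trivially from (\ref{eq_1801}). For fixed $t$, Proposition \ref{prop_1456} says $G_{t,B}$ is (almost surely) a diffeomorphism of $\RR^n$, and Lemma \ref{lem_1524} says $\nabla\Lambda_0 \colon \RR^n \to K$ and $\nabla\Lambda_t \colon \RR^n \to K$ are diffeomorphisms; hence the composition $S_t = \nabla\Lambda_t \circ G_{t,B} \circ \nabla\Lambda_0^{-1}$ is a diffeomorphism of $K$ onto $K$. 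The only subtlety is that we want this simultaneously for all $t \geq 0$ on a single event of full measure: here I would invoke the joint continuity of $(t,x)\mapsto G_{t,B}(x)$ from Lemma \ref{lem_1300} (together with Lemma \ref{lem_1800} for smoothness in $x$) so that the exceptional null set can be chosen independently of $t$.

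For part (b), fix $\xi \in K$ and write $x = \nabla\Lambda_0^{-1}(\xi) \in \RR^n$, so that $S_t(\xi) = \nabla\Lambda_t(G_{t,B}(x)) = a(t, \theta^x_t)$ with $\theta^x_t = G_{t,B}(x)$. The key identity is (\ref{eq_1136}) from the proof of Proposition \ref{prop_1824}: in the representation of $(\theta^x_t)$ as $(x + B_t + tX)$ with $X \sim \mu_x$ independent of $B$, one has $a(t, x + X_t) = \EE[X \mid \mathcal G_t]$ where $X_t = tX + B_t$ and $\mathcal G_t$ is the natural filtration of $(X_t)$. Thus $S_t(\xi)$ equals the conditional expectation $\EE[X\mid\mathcal G_t]$ of a fixed integrable random vector with respect to an increasing filtration, which is manifestly a martingale; the $\mathcal G_t$-martingale property transfers to the natural filtration $\mathcal F_t$ of $B$ because $\tilde B$ from (\ref{eq_1709}) is itself a Brownian motion generating the same filtration (indeed $\mathcal G_t = \sigma(\tilde B_s : s\leq t)$, as $X_t$ and $\tilde B_t$ determine each other via an absolutely-continuous change), but to be careful I would instead argue directly on the process $(\theta^x_t)$ driven by $B$: $S_t(\xi)$ is $\mathcal F_t$-measurable by Lemma \ref{lem_1300}, and the martingale property follows from Corollary \ref{cor_1000} once one knows boundedness (from (\ref{eq_1123_}), $|S_t(\xi)| = |a(t,\theta^x_t)| \leq C_\mu$) — alternatively, differentiating $S_t(\xi) = a(t,\theta^x_t)$ via Itô's formula and checking the drift vanishes, which is exactly the content of the innovation-process computation. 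The almost-sure convergence of $S_t(\xi)$ and the identification of the limit law as $\mu_x$ — the unique tilt with barycenter $\xi$, by Lemma \ref{lem_1524} applied with $t=0$ — is Corollary \ref{cor_1000} composed with $\nabla\Lambda_t$; here one uses that $G_{t,B}(x)/t \to X$ and that $\nabla\Lambda_t(\theta) = \Lambda_t$-barycenter behaves like $\nabla\Lambda_t(t\cdot(\theta/t))$, so $a(t, \theta^x_t) \to X$, which requires a short argument that $a(t, t v + o(t)) \to v$ as $t\to\infty$ for $v$ in the relevant region.

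For part (c), the Markov property, the engine is the semigroup identity of Lemma \ref{lem_6734}, pushed through $\nabla\Lambda$ to give $S_{t_1+t_2,w} = S_{t_1,t_2,\sigma_{t_1}(w)} \circ S_{t_1,w}$. Taking $w = B$ and applying this at $\xi$: $S_{t_1+t_2}(\xi) = S_{t_1,t_2,\sigma_{t_1}(B)}(S_{t_1}(\xi))$. Now $S_{t_1}(\xi)$ is $\mathcal F_{t_1}$-measurable (Lemma \ref{lem_1300}), while the shifted path $\sigma_{t_1}(B) = (B_{t_1 + \cdot} - B_{t_1})$ is a standard Brownian motion independent of $\mathcal F_{t_1}$. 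Conditioning on $\mathcal F_{t_1}$ therefore freezes $S_{t_1}(\xi)$ and averages over an independent fresh Brownian motion, yielding $\EE[f(S_{t_1+t_2}(\xi))\mid\mathcal F_{t_1}] = (\EE f(S_{t_1,t_2,B'}(\cdot)))\big|_{\,\cdot\, = S_{t_1}(\xi)} = P_{t_1,t_2}f(S_{t_1}(\xi))$, where $B'$ is an independent copy; this is a standard freezing/independence lemma for conditional expectations, applicable once one checks the joint measurability of $(\eta, w)\mapsto f(S_{t_1,t_2,w}(\eta))$, which follows from the continuity statement analogous to Lemma \ref{lem_1300} for $G_{t_1,t_2,w}$ (proved the same way) composed with the continuous maps $\nabla\Lambda_{t_1}^{-1}, \nabla\Lambda_{t_1+t_2}$.

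The main obstacle I anticipate is not any single deep step but the bookkeeping around exceptional null sets and filtration identifications: ensuring that "$S_t$ is a diffeomorphism of $K$ for all $t$" holds off a single null set, and making the freezing argument in (c) fully rigorous (measurable selection of the regular conditional distribution, joint measurability of $S_{t_1,t_2,w}(\eta)$). The martingale property in (b) is the most conceptually loaded — it is essentially the statement that the filtering conditional expectation is a martingale, or equivalently that the innovation process is a Brownian motion — but all the needed computation was already carried out in the proof of Proposition \ref{prop_1824}, so here it is mostly a matter of citing it correctly and transporting through $\nabla\Lambda_t$.
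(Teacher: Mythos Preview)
Your proposal is correct and follows essentially the same route as the paper: part (a) via composition of the diffeomorphisms from Proposition~\ref{prop_1456} and Lemma~\ref{lem_1524}, part (c) via the semigroup identity of Lemma~\ref{lem_6734} combined with the independence of the shifted Brownian motion, and part (b) via the identification $S_t(\xi) = a(t,\theta_t^x) = \E[X\mid \mathcal G_t]$ from the filtering computation in Proposition~\ref{prop_1824}. The one place where the paper is slightly cleaner is the limit in (b): rather than arguing directly that $a(t, tv + o(t)) \to v$, the paper (in the auxiliary Lemma~\ref{lem_9999}) invokes martingale convergence to get $b_t \to \E[X\mid \mathcal G_\infty]$ and then observes that $X = \lim_t X_t/t$ is $\mathcal G_\infty$-measurable, so the limit is $X$ itself.
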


\begin{remark} Under mild regularity assumptions, and assuming that $K \subseteq \RR^n$ is strictly-convex, the diffeomorphism $S_t$ extends to a homeomorphism of the closure of $K$ which almost surely satisfies $S_t|_{\partial K} = \id$ for all $t \geq 0$. We do not prove this fact in this article.
\end{remark}

From Theorem  \ref{prop_1623}$(b)$ we see  that $S_{\infty}$ provides a simultaneous coupling of any countable subcollection of the family of exponential tilts $(\mu_x)_{x \in \RR^n}$.
We thus provide a case study
in the theory of  multi-marginal transport;  see \cite{pass} for a survey of this theory.
The proof of Theorem \ref{prop_1623} requires the following:
\begin{lemma} \label{lem_9999}
Fix $x\in \R^n$, and for $t \geq 0$ set $\theta_t = G_{t,B}(x)$ and $a_t = a(t,\theta_t)$. Then $(a_t)_{t \geq 0}$ is a martingale
and its limit as $t \to \infty$ has law $\mu_x$.
\end{lemma}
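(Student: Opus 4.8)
The plan is to recognize that this lemma essentially repackages the computation already carried out in the proof of Proposition~\ref{prop_1824}: the process $(a_t)$ will be the Doob martingale generated by a single $\mu_x$-distributed random variable, and the non-linear filtering identity~\eqref{eq_1136} supplies all the content. Concretely, I would first pass to the explicit model of Proposition~\ref{prop_1824}, which says that $(\theta_t)_{t\ge0} = (G_{t,B}(x))_{t\ge0}$ has the same law, as a random element of $\cC([0,\infty),\R^n)$, as $(x + B_t + tX)_{t\ge0}$ with $X\sim\mu_x$ independent of $(B_t)$. Since $t\mapsto a(t,\theta_t)$ is a continuous functional of the path $(\theta_t)$, and since the martingale property relative to one's own natural filtration, almost-sure convergence, and the law of the limit are all determined by the law of the path, it suffices to prove both assertions in this model, for $a_t := a(t, x + X_t)$ with $X_t := tX + B_t$. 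I would also record that in the construction $\theta_t = G_{t,B}(x)$ the Brownian filtration $\sigma(B_s : s\le t)$ coincides with the natural filtration $\sigma(\theta_s : s\le t)$ of the path: one inclusion is Lemma~\ref{lem_1300}, and the other follows by solving \eqref{eq_1025} for $B_t = \theta_t - x - \int_0^t a(s,\theta_s)\,ds$. Thus a martingale statement obtained in the model, being relative to the natural filtration of the path, transfers to the Brownian filtration used elsewhere in the paper.

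In the model, let $(\mathcal G_t)$ be the natural filtration of $(X_t)$, equivalently of $(\theta_t) = (x + X_t)$ since $x$ is deterministic. Identity~\eqref{eq_1136} reads $a_t = a(t, x + X_t) = \E[X \mid \mathcal G_t]$, so $(a_t)$ is by construction a martingale with respect to $(\mathcal G_t)$, hence also with respect to its own (a priori smaller) natural filtration. For the limit, $(a_t)$ is bounded by~\eqref{eq_1123_}, while $\theta_t/t = x/t + X + B_t/t \to X$ almost surely since $B_t/t \to 0$; thus $X$ is measurable with respect to $\mathcal G_\infty := \sigma\big(\bigcup_{t\ge0}\mathcal G_t\big)$, and L\'evy's upward convergence theorem gives $a_t = \E[X \mid \mathcal G_t] \to \E[X \mid \mathcal G_\infty] = X$ almost surely, with $X\sim\mu_x$. (Alternatively, one may combine the almost-sure convergence of the bounded martingale $(a_t)$ with Corollary~\ref{cor_1000} to identify the limit.) Transferring back through Proposition~\ref{prop_1824} yields the lemma.

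I do not expect a real obstacle here, since the one computation that matters --- the filtering identity~\eqref{eq_1136} --- is already available. The only point requiring a little care is the bookkeeping of filtrations: confirming that the martingale property is the one intended (relative to the Brownian filtration, equivalently to the natural filtration of $(a_t)$), and that the reduction via Proposition~\ref{prop_1824} is applied only to properties that genuinely depend on the law of the path. A slightly more direct route is to observe that $a(t,\theta_t) = \E[X_\infty \mid \mathcal F_t]$, where $X_\infty = \lim_{t\to\infty}\theta_t/t$ is the $\mu_x$-distributed limit of Corollary~\ref{cor_1000} and $(\mathcal F_t)$ is the Brownian filtration --- this being \eqref{eq_1136} transported to the original coordinates --- from which the martingale property and the identification of the limit are immediate.
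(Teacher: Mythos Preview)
Your proposal is correct and follows essentially the same approach as the paper: reduce via Proposition~\ref{prop_1824} to the explicit model $b_t = a(t,x+tX+B_t)$, invoke the filtering identity~\eqref{eq_1136} to write $b_t = \E[X\mid\mathcal G_t]$, and conclude by martingale convergence using that $X = \lim_t X_t/t$ is $\mathcal G_\infty$-measurable. Your extra bookkeeping on filtrations (identifying the Brownian filtration with the natural filtration of $(\theta_t)$ via Lemma~\ref{lem_1300} and~\eqref{eq_1025}) is a welcome addition that the paper leaves implicit.
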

\begin{proof}
Let $X$ be a random vector having law $\mu_x$ that
is independent of the Brownian motion $(B_t)$.
By Proposition~\ref{prop_1824},
it suffices to prove that the stochastic process $(b_t)$ given by
\[
b_{t} = a (t , x + t X + B_t )  , \quad t\geq 0
\]
is a martingale whose limit as $t \to \infty$ 
equals $X$ almost surely. The  process $(b_t)_{t \geq 0}$ is uniformly bounded in view of 
(\ref{eq_1123_}). Let $X_t = t X + B_t$ and write $(\cG_t)$ for 
the natural filtration of
the process $(X_t)$. 
According to (\ref{eq_1136}),
\[
b_t = \E [ X\mid \mathcal \cG_t ] , \qquad t>0.
\]
This implies that $b_t \to \E [X\mid \mathcal G_\infty]$
almost surely, where $\mathcal G_\infty$
is the $\sigma$-algebra generated by $\cup_t \mathcal G_t$
(see e.g. \cite[Chapter 14]{will}).
However, $X = \lim_t X_t/t$ is $\mathcal G_\infty$-measurable.
Therefore $\E [ X \mid \mathcal G_\infty ] = X$ and the proof is complete. 
\end{proof}
\begin{remark}
In fact, the process $(M_t)_{t \geq 0}$ given by
$M_t = \int_{\R^n}  \phi \, d\mu_{t,\theta_t}$ is a martingale
for any bounded test function $\phi$, and not just for $\phi (x) = x$.
Hence, in a sense, the measure-valued
process $(\mu_{t,\theta_t})$ is a martingale.
This measure-valued martingale is called the \emph{stochastic localization process} associated
to $\mu$, see \cite{KLbull} and references therein.
\end{remark}
\begin{proof}[Proof of Theorem \ref{prop_1623}]
Item $(a)$ follows immediately from Proposition~\ref{prop_1456}
and Lemma~\ref{lem_1524}, since the composition of three diffeomorphisms is a diffeomorphism.
In order to prove $(b)$ we fix a point $\xi \in K$ and
let $x = (\nabla \Lambda_0)^{-1} (\xi)$, so that $\mu_{x}$ is the tilt of $\mu$
having barycenter at $\xi$. Note that
\[
S_t ( \xi )  = \nabla \Lambda_t \circ G_{t,B} ( x )
= a (t , G_{t,B} ( x ) ) .
\]
Lemma~\ref{lem_9999} thus implies $(b)$. In order to prove $(c)$, observe that by Lemma~\ref{lem_6734}
and Definition \ref{def_1942}, with $x = (\nabla \Lambda_0)^{-1} (\xi)$,
\begin{align}\label{eq_3432}
S_{t_1+t_2} (\xi) & = S_{t_1+t_2,B} (\xi)
= \nabla \Lambda_{t_1 + t_2, B} \circ G_{t_1 + t_2, B}(x)
\\ & = \nabla \Lambda_{t_1 + t_2, B} \circ G_{t_1, t_2, \sigma_{t_1}(B) } ( G_{t_1,B} (x) ) \nonumber 
\\ & = S_{t_1, t_2, \sigma_{t_1}(B) }( (\nabla \Lambda_{t_1}) \circ G_{t_1,B} (x) ) = S_{t_1, t_2, \sigma_{t_1}(B) }( S_{t_1,B}(\xi) ).
\nonumber
\end{align}
Let us now prove (\ref{eq_3430}). It follows from Lemma~\ref{lem_1300} that the process $(S_t (\xi))_{t \geq 0}$ is adapted to the filtration $(\mathcal F_{t})$
of the Brownian motion $(B_t)$.
Thus $S_{t_1}(\xi)$ is $\mathcal F_{t_1}$-measurable. Moreover,
since the Brownian motion has independent and stationary increments,
the process $\sigma_{t_1}(B)$ is
a standard Brownian motion independent of $\mathcal F_{t_1}$. Therefore
for any bounded, continuous function $f: K \rightarrow \RR$
\begin{equation}
\E [f ( S_{t_1,t_2 , \sigma_{t_1} ( B ) } ( S_{t_1,B} (\xi) ) ) \mid \mathcal F_{t_1} ] = F ( S_{t_1, B} (\xi) )  ,
\label{eq_925} \end{equation}
where $F: K \rightarrow \RR$ is given by
\[
F( \xi ) = \E f ( S_{t_1,t_2,B} (\xi )) = P_{t_1,t_2} f ( \xi ), \qquad  \xi \in K.
\]
By combining (\ref{eq_3432}) with (\ref{eq_925}) we conclude  (\ref{eq_3430}).
\end{proof}

\begin{remark} The Markov process $(S_{t}(\xi))$ is  a time-inhomogeneous
diffusion, whose generator is the second order
differential operator $\mathcal L_t$ given by
\[
\mathcal L_t f (\xi)  = \frac 12 \Tr \left[
\nabla^2 \Lambda_t \left( (\nabla \Lambda_t)^{-1} (\xi) \right)  \nabla^2 f ( \xi )
\right] ,
\]
for suitable functions $f \colon K \to \R$.
The proof is omitted.
\end{remark}

%
% $where $X_x$ is a random vector having law $\mu_x$. This shows
%in particular
%
%and the fact that if $B_t$ is a and by (\ref{eq_1134}), almost surely
%	$$ \lim_{t \rightarrow \infty} \frac{G_{t, W}(\xi)}{t} =
%\lim_{t \rightarrow \infty} \frac{\xi + B_t + t X_{\xi}}{t} =
%X_{\xi} + \lim_{t \rightarrow \infty} \frac{\xi + B_t}{t} = X_{\xi}. $$
%	This proves (i). As for (ii), the process
%	$$ \EE \left[ X_{\xi} | \cN_t \right] $$
%	is clearly a martingale with respect to the filtration $(\cN_t)_{t \geq 0}$. In the proof of Proposition \ref{prop_1824} we explained that $\cN_t$ is the $\sigma$-algebra generated by $(\tilde{\theta}_s)_{0 \leq s \leq t}$. Therefore, from (\ref{eq_1929}) and (\ref{eq_1136}),
%	$$ \EE \left[ X_{\xi} | \cN_t \right] = \EE \left[ X_{\xi} | (\tilde{\theta}_s)_{0 \leq s \leq t} \right] = a(t, \xi + \tilde{\theta}_t) = a(t, G_{t, W}(\xi)). $$
%	Consequently (ii) follows from (\ref{eq_1929_}).
%\end{proof}
%
%
%
%
%\begin{lemma} Let $\theta_0 \in \RR^n$ and let $B = (B_t)_{t \geq 0} \in \cC([0, \infty), \RR^n)$. Then there exists a unique solution
%$(\theta_t)_{t > 0}$ to the integral equation
%	\begin{equation}  \theta_t = \theta_0 + B_t + \int_0^t a(s,  \theta_s) ds \qquad \qquad \qquad (t > 0). \label{eq_1801} \end{equation}
%	Moreover, $\theta_t$ depends continuously on $(t, \theta_0, B) \in [0, \infty) \times \RR^n \times \cC([0, \infty), \RR^n)$ and it depends smoothly on $\theta_0 \in \RR^n$.
%	\label{lem_1626}
%\end{lemma}

\section{Wasserstein distances in the log-concave case}
\label{sec3}

As in the previous sections, let $\mu$ be a compactly-supported probability measure whose support affinely spans $\RR^n$.
In this section we add the assumption that $\mu$ is {\it log-concave}. In this case, the log-concave Lichnerowicz inequality (e.g. \cite[Section 4]{KLbull} and references therein)
implies that for any $t > 0$ and $\theta \in \RR^n$,
\begin{equation}  A(t, \theta) = \nabla^2 \Lambda_t(\theta) \leq \frac{1}{t} \cdot \id \label{eq_1817} \end{equation}
in the sense of symmetric matrices. By integration, this implies that for any $t > 0$ and $\theta_1, \theta_2 \in \RR^n$,
\begin{equation}  \langle \nabla \Lambda_t(\theta_1) - \nabla \Lambda_t(\theta_2), \theta_1 - \theta_2 \rangle \leq \frac{1}{t} \cdot |\theta_1 - \theta_2|^2.
\label{eq_1734} \end{equation}
Recall the flow map $G_{t,w}: \RR^n \rightarrow \RR^n$ from Definition \ref{def_1607}.
\begin{lemma} \label{lem_1119}
If $\mu$ is log-concave, then for any $w\in\mathcal C ([0,\infty) , \R^n)$
and $x,y \in \R^n$, the quantity
\begin{equation} 
\frac{ \vert G_{t,w} (x) - G_{t,w} (y) \vert }{t}
\label{eq_1144} \end{equation}
is a non-increasing function of $t \in (0, \infty)$.
\end{lemma}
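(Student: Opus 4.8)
The plan is to show that $t \mapsto |G_{t,w}(x) - G_{t,w}(y)|/t$ has non-positive derivative for $t > 0$. Write $\theta_t^x = G_{t,w}(x)$, $\theta_t^y = G_{t,w}(y)$, and set $\delta_t = \theta_t^x - \theta_t^y$. From the integral equation (\ref{eq_1801}) the Brownian part cancels in the difference, so $\delta_t = (x - y) + \int_0^t [a(s,\theta_s^x) - a(s,\theta_s^y)]\,ds$, hence $\delta_t$ is an absolutely continuous (in fact $\mathcal C^1$ in $t>0$) path with
\[
\frac{d}{dt}|\delta_t|^2 = 2\langle a(t,\theta_t^x) - a(t,\theta_t^y), \delta_t\rangle = 2\langle \nabla\Lambda_t(\theta_t^x) - \nabla\Lambda_t(\theta_t^y), \theta_t^x - \theta_t^y\rangle,
\]
recalling $a(t,\cdot) = \nabla\Lambda_t(\cdot)$ from (\ref{eq_1329}). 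By the log-concave Lichnerowicz bound (\ref{eq_1734}) this is at most $\frac{2}{t}|\delta_t|^2$.

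Now consider $f(t) = |\delta_t|^2 / t^2$ for $t > 0$. Differentiating,
\[
f'(t) = \frac{1}{t^2}\frac{d}{dt}|\delta_t|^2 - \frac{2}{t^3}|\delta_t|^2 \leq \frac{1}{t^2}\cdot \frac{2}{t}|\delta_t|^2 - \frac{2}{t^3}|\delta_t|^2 = 0.
\]
Thus $f$ is non-increasing on $(0,\infty)$, and therefore so is $\sqrt{f(t)} = |G_{t,w}(x) - G_{t,w}(y)|/t$, which is the claim.

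The only point requiring a little care is the regularity needed to justify the termwise differentiation and to handle the case $\delta_t = 0$. For the first, Lemma \ref{lem_1800} already gives that $\theta_t^x$ is $\mathcal C^1$ in $t > 0$ (and continuous down to $t=0$), so $\delta_t$ is $\mathcal C^1$ on $(0,\infty)$ and the computation of $\frac{d}{dt}|\delta_t|^2$ is legitimate; the function $f$ is then $\mathcal C^1$ on $(0,\infty)$ as well. For the second, note that by Proposition \ref{prop_1456} the map $G_{t,w}$ is expanding, so $|\delta_t| \geq |x-y|$; hence if $x \neq y$ then $\delta_t$ never vanishes and $\sqrt{f}$ is differentiable wherever we need it, while if $x = y$ the statement is trivial. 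So there is no genuine obstacle here — the argument is essentially the one-line Gronwall-type estimate above, with (\ref{eq_1734}) doing the real work.
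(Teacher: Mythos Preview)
Your argument is correct and matches the paper's proof essentially line for line: differentiate $|\delta_t|^2$, bound it by $\frac{2}{t}|\delta_t|^2$ via (\ref{eq_1734}), and conclude that $|\delta_t|^2/t^2$ has non-positive derivative. One small slip in the regularity remarks: Lemma~\ref{lem_1800} does not assert that $\theta_t^x$ itself is $\mathcal C^1$ in $t$ (for a generic continuous $w$ it is not), but as you already observed the $w_t$-terms cancel in $\delta_t = (x-y) + \int_0^t[a(s,\theta_s^x)-a(s,\theta_s^y)]\,ds$, and that is what makes $\delta_t$ a $\mathcal C^1$ path and the computation valid.
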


\begin{proof}
The proof is  similar to the second part of the proof of
Proposition \ref{prop_1456}.
Let $\theta_t^{x} = G_{t,w} (x)$ and $\theta_t^{y} = G_{t,w} (y)$.
By (\ref{eq_1329}) and (\ref{eq_1801}), 
\[
\theta_t^x - \theta_t^y = x-y
+ \int_0^t \left[ \nabla \Lambda_s( \theta_s^x )
- \nabla \Lambda_s ( \theta_s^y ) \right] \, ds .
\]
Differentiating with respect to $t$ and using (\ref{eq_1734}),
\[
 \frac{d}{dt} \vert \theta_t^x - \theta_t^y \vert^2
= 2 \langle \nabla \Lambda_t(\theta_t^x)
- \nabla \Lambda_t( \theta_{t}^y ) , \theta_t^x - \theta_t^y \rangle \\
 \leq \frac{2}{t} \cdot \vert \theta_t^x - \theta_t^y \vert^2 .
 \]
Hence
\[
\frac d{dt} \frac{ \vert \theta_t^x - \theta_t^y \vert^2 }{ t^2 } \leq 0.
\]
This implies that the function in (\ref{eq_1144}) is non-increasing in $t$. 
\end{proof}

For two Borel probability measures $\nu_1, \nu_2$ in $\RR^n$ and for $1 \leq p < \infty$ we write $W_p(\nu_1, \nu_2)$ for the $L^p$-Wasserstein distance between $\nu_1$ and $\nu_2$. That is,
$$ W_p(\nu_1, \nu_2) = \inf_{X_1,X_2} \left( \EE |X_1 - X_2|^p \right)^{1/p} $$
where the infimum runs over all random vectors $X_1,X_2$ defined on the same probability space with $X_i$ having law $\nu_i$ for $i=1,2$. In other words, $X_1$ and $X_2$ provide a  coupling of $\nu_1$ and $\nu_2$. As before, we let $B = (B_t)_{t\geq 0}$
be a standard Brownian motion in $\RR^n$, with $B_0 = 0$.

\begin{proposition} \label{prop_1705}
Assume that $\mu$ is log-concave. For $x\in \R^n$
and $t >0$ set $\theta_t^x = G_{t,B} ( x)$. Then for any
$x,y \in \R^n$, $1 \leq p < \infty$ and $t > 0$, 
 \begin{equation} \label{eq_1424} W_p \left(\mu_{x}, \mu_{y} \right) \leq \frac{1}{t} \cdot \left( \EE \left| \theta_t^x - \theta_t^y \right|^p \right)^{1/p}. \end{equation}
\end{proposition}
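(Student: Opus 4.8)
The plan is to realize the left-hand side of (\ref{eq_1424}) via the \emph{parallel coupling} from Section \ref{sec2} and then to exploit the monotonicity established in Lemma \ref{lem_1119}. First I would invoke Corollary \ref{cor_1000}: since $\theta_t^x = G_{t,B}(x)$ and $\theta_t^y = G_{t,B}(y)$ are driven by the \emph{same} Brownian motion $B$, the almost-sure limits
\[
Z_x := \lim_{s \to \infty} \frac{\theta_s^x}{s}, \qquad Z_y := \lim_{s \to \infty} \frac{\theta_s^y}{s}
\]
exist on a common probability space and have laws $\mu_x$ and $\mu_y$, respectively. Hence $(Z_x, Z_y)$ is a coupling of $\mu_x$ and $\mu_y$, and by the very definition of the Wasserstein distance,
\[
W_p(\mu_x, \mu_y)^p \leq \EE |Z_x - Z_y|^p.
\]

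The next step is to bound $|Z_x - Z_y|$ pointwise by the quantity appearing on the right-hand side of (\ref{eq_1424}). Writing $Z_x - Z_y = \lim_{s \to \infty} (\theta_s^x - \theta_s^y)/s$ almost surely, and applying Lemma \ref{lem_1119} with $w = B$ — which is precisely where the log-concavity of $\mu$ enters, through the log-concave Lichnerowicz bound (\ref{eq_1817}) — we get that for every $s \geq t > 0$,
\[
\frac{|\theta_s^x - \theta_s^y|}{s} \leq \frac{|\theta_t^x - \theta_t^y|}{t}.
\]
Letting $s \to \infty$ yields $|Z_x - Z_y| \leq |\theta_t^x - \theta_t^y|/t$ almost surely. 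Taking $p$-th moments and combining with the previous display gives
\[
W_p(\mu_x, \mu_y)^p \leq \EE|Z_x - Z_y|^p \leq \frac{1}{t^p}\, \EE |\theta_t^x - \theta_t^y|^p,
\]
which is exactly (\ref{eq_1424}). For each fixed $t > 0$ the right-hand side is finite, since $|\theta_t^x - \theta_t^y| \leq \e^{C_\mu t}\,|x-y|$ by Lemma \ref{lem_1400}; but this is not strictly needed, as the inequality is vacuous otherwise.

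I do not expect a serious obstacle here: the two genuinely substantial ingredients — the almost-sure convergence of $G_{t,B}(x)/t$ to a sample from $\mu_x$ (Corollary \ref{cor_1000}, ultimately Proposition \ref{prop_1824}) and the monotonicity of $t \mapsto |G_{t,B}(x) - G_{t,B}(y)|/t$ (Lemma \ref{lem_1119}) — are already in hand. The only points requiring care are that the \emph{same} Brownian motion must be used for both starting points $x$ and $y$ so that $(Z_x, Z_y)$ is a bona fide coupling rather than merely a pair of random vectors with the correct marginals, and the passage to the limit $s \to \infty$ in the inequality above, which is immediate since it is the limit of an inequality valid for all $s \geq t$.
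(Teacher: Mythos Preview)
Your proof is correct and follows essentially the same approach as the paper: use Corollary~\ref{cor_1000} to realize $(Z_x,Z_y)$ as a coupling of $\mu_x$ and $\mu_y$ via the common Brownian motion, then invoke the monotonicity from Lemma~\ref{lem_1119} to bound $|Z_x-Z_y|$ pointwise by $|\theta_t^x-\theta_t^y|/t$. The paper's proof is marginally terser but otherwise identical.
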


\begin{proof} By Corollary \ref{cor_1000} we know that
\[
\lim_{t\to \infty} \frac{\theta_t^{x}}{t}
\]
exists almost surely, and has law $\mu_x$. Similarly $\lim_{t} \theta_t^{y} / t$ exists and has law $\mu_y$. Thus, by the definition of the Wasserstein distance
\[
W_p \left(\mu_{x}, \mu_{y} \right)^p \leq
\EE  \left\vert \lim_{t\to \infty} \frac{\theta_t^{x}}{t} - \lim_{t \to \infty} \frac{ \theta_t^{y} }{t} \right\vert^p 
=
\EE \lim_{t\to \infty} \left\vert \frac{\theta_t^{x} -\theta_t^{y} }{t} \right\vert^p .
\]
On the other hand, the quantity
$ \left\vert \frac{\theta_t^{x} - \theta_t^{y}}{t} \right \vert$
is almost surely a non-increasing function of $t \in (0, \infty)$, according 
to Lemma~\ref{lem_1119}.
In particular, the value of this quantity at any fixed time $t$
is at least as large as the limit value, and (\ref{eq_1424}) follows.
\end{proof}
Next we formulate an infinitesimal version of Proposition \ref{prop_1705} in the case $p=2$.
Recall from Lemma~\ref{lem_1800}
that for any $t>0$ and any continuous path $w$,
the map $G_{t,w}: \RR^n \rightarrow \RR^n$ is smooth and  $G_{t,w}'(x) \in \RR^{n \times n}$
denotes its derivative at the point $x \in \RR^n$.
%Therefore
%we can define a random process $(M_t)$ with values in the space of
%$n \times n$ matrices by setting
%\begin{equation}\label{defMt}
%M_t = d G_{t,B} (0) = \left. \frac d{dx} \right\vert_{x = 0}  \theta_t^x .
%\end{equation}
%
\begin{corollary} \label{cor_1521}
Assume that $\mu$ is log-concave. For $t \geq 0$ set $$ M_t = G_{t, B}'(0) \in \RR^{n \times n}. $$ 
Then for any $v \in \RR^n$ and $t > 0$,
\begin{equation}
\limsup_{\epsilon \to 0^+} \frac{ W_2 ( \mu , \mu_{\epsilon v} ) }\epsilon
\leq \frac{  ( \E \vert M_t v \vert^2 )^{1/2} } t.
\label{eq_1521}
\end{equation}
\end{corollary}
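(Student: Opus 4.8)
The plan is to apply Proposition~\ref{prop_1705} with $x = 0$ and $y = \epsilon v$, and then let $\epsilon \to 0^+$, using the spatial smoothness of the flow map $G_{t,w}$ to identify the limit of the resulting difference quotients with $M_t v$. Since $\mu_0 = \mu$, Proposition~\ref{prop_1705} with $p = 2$ gives, for every $\epsilon > 0$ and $t > 0$,
\[
W_2 \left( \mu, \mu_{\epsilon v} \right) \leq \frac{1}{t} \left( \E \left| \theta_t^{\epsilon v} - \theta_t^{0} \right|^2 \right)^{1/2} = \frac{1}{t} \left( \E \left| G_{t,B}(\epsilon v) - G_{t,B}(0) \right|^2 \right)^{1/2} .
\]
Dividing both sides by $\epsilon$, I would conclude the corollary by showing that
\[
\lim_{\epsilon \to 0^+} \frac{1}{\epsilon^2} \, \E \left| G_{t,B}(\epsilon v) - G_{t,B}(0) \right|^2 = \E | M_t v |^2 ,
\]
after which $\limsup_{\epsilon \to 0^+} W_2(\mu, \mu_{\epsilon v})/\epsilon \leq (\E|M_t v|^2)^{1/2}/t$ follows immediately.

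To establish the displayed limit, I would argue pathwise. The Brownian path $w = B$ almost surely lies in $\cC([0,\infty),\RR^n)$, and by Lemma~\ref{lem_1800} the map $x \mapsto G_{t,w}(x)$ is then smooth with derivative $M_t = G_{t,B}'(0)$ at the origin; hence, almost surely,
\[
\frac{G_{t,B}(\epsilon v) - G_{t,B}(0)}{\epsilon} \longrightarrow M_t v \qquad \text{as } \epsilon \to 0^+ .
\]
The remaining issue is to transfer this almost sure convergence to convergence of the second moments, i.e.\ to justify the interchange of limit and expectation. For this I would invoke Lemma~\ref{lem_1400}: the map $G_{t,B} \colon \RR^n \to \RR^n$ is $\e^{C_\mu t}$-Lipschitz, so
\[
\frac{\left| G_{t,B}(\epsilon v) - G_{t,B}(0) \right|}{\epsilon} \leq \e^{C_\mu t} \, |v|
\]
for every $\epsilon > 0$, which is a deterministic bound, independent of $\epsilon$. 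Squaring and applying the dominated convergence theorem yields the displayed limit.

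The argument is essentially routine, and the one point genuinely requiring care is precisely this interchange of $\lim$ and $\E$; note that $\epsilon \mapsto |G_{t,B}(\epsilon v) - G_{t,B}(0)|/\epsilon$ need not be monotone, so a monotone convergence argument is unavailable and the uniform Lipschitz domination of Lemma~\ref{lem_1400} is the natural tool. Everything else is a direct combination of Proposition~\ref{prop_1705} (which already incorporates the log-concavity assumption through Lemma~\ref{lem_1119}) with the pathwise differentiability recorded in Lemma~\ref{lem_1800}.
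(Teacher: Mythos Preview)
Your proof is correct and matches the paper's argument essentially step for step: apply Proposition~\ref{prop_1705} with $x=0$, $y=\epsilon v$, $p=2$, use the pathwise smoothness from Lemma~\ref{lem_1800} to get almost-sure convergence of the difference quotient to $M_t v$, and invoke the uniform Lipschitz bound of Lemma~\ref{lem_1400} to justify dominated convergence. The only difference is cosmetic ordering---the paper computes the limit first and then substitutes into Proposition~\ref{prop_1705}, while you do the reverse.
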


\begin{proof}
Recall that $\mu = \mu_0$ and that we use the notation $\theta_t^x = G_{t,B} (x)$.
By Lemma~\ref{lem_1400}, almost surely, for all  $\epsilon > 0$,
\[
\frac{ \vert \theta_t^0 - \theta_t^{\epsilon v} \vert }{ \epsilon } \leq \e^{C_\mu t} |v|,
\]
for some constant $C_\mu > 0$.
Thus, by the dominated convergence theorem, 
\[
\lim_{\epsilon \to 0^+}
\frac{ \E \vert \theta_t^0 - \theta_t^{\epsilon v} \vert^2 }{\epsilon ^2 } =
 \E \left\vert \lim_{\epsilon \to 0^+} \frac{\theta_t^0 - \theta_t^{\epsilon v}}{\epsilon}  \right\vert^2  = 
 \E \vert \partial_v G_{t,B}(0) \vert^2 =  \E \vert M_t v\vert^2 .
\]
By substituting this into Proposition \ref{prop_1705} we obtain (\ref{eq_1521}). 
\end{proof}

The infinitesimal Wasserstein distance
is intimately related to the $H^{-1}$-norm,
see e.g. Villani \cite[Section 7.6]{Vil} or the Appendix of \cite{K_ptrf}.
Specifically, we shall need the following lemma:

\begin{lemma}\label{lem_villani} Let $\mu$ be a centered, compactly-supported
probability measure on $\RR^n$. Then for any vector $v\in \R^n$,
\[
\Vert \langle x ,v\rangle \Vert_{H^{-1} (\mu) }
\leq \limsup_{\epsilon \to 0^+} \frac{ W_2 ( \mu , \mu_{\epsilon v} ) }{\epsilon} .
\]
\end{lemma}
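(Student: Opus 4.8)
\emph{Proof strategy.} The plan is to verify the dual estimate
\[
\int_{\RR^n} \langle x, v\rangle\, g(x)\, d\mu(x) \;\le\; \limsup_{\eps \to 0^+} \frac{W_2(\mu, \mu_{\eps v})}{\eps}
\]
for every $g \in \mathcal C_c^{\infty}(\RR^n)$ with $\int_{\RR^n} |\nabla g|^2\, d\mu \le 1$, and then to take the supremum over all such $g$, invoking the characterization \eqref{eq_1528} of the $H^{-1}(\mu)$-norm. This is legitimate since $\langle x, v\rangle$ is centered with respect to the centered measure~$\mu$.

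First I would record the first-order behaviour of the exponential tilt. Specializing \eqref{eq_1045} to $t = 0$ gives $d\mu_{\eps v}/d\mu(x) = \exp\big(\eps\langle x, v\rangle - \Lambda_0(\eps v)\big)$; since the barycenter $\nabla \Lambda_0(0) = \int x\, d\mu$ vanishes, $\Lambda_0(\eps v) = O(\eps^2)$. Because $\mu$ is compactly supported, $\langle x, v\rangle$ is bounded on the support of $\mu$, so $d\mu_{\eps v}/d\mu(x) = 1 + \eps\langle x, v\rangle + O(\eps^2)$ uniformly there; hence, for the bounded function $g$,
\[
\int_{\RR^n} g\, d\mu_{\eps v} - \int_{\RR^n} g\, d\mu \;=\; \eps \int_{\RR^n} \langle x, v\rangle\, g\, d\mu + O(\eps^2) \qquad (\eps \to 0^+).
\]

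Next I would bound the same difference from above by transport. For each $\eps > 0$ I would pick an optimal coupling $(X_1, X_2)$ of $(\mu, \mu_{\eps v})$, so that $\EE|X_1 - X_2|^2 = W_2(\mu, \mu_{\eps v})^2$. Since $g$ is smooth and compactly supported its gradient is Lipschitz, say with constant $L$, and hence $g(y) \le g(x) + \langle \nabla g(x), y - x\rangle + \tfrac{L}{2}|y - x|^2$ for all $x, y \in \RR^n$ (integrate $\nabla g$ along the segment from $x$ to $y$). Taking $x = X_1$, $y = X_2$, passing to expectations, and applying Cauchy--Schwarz to the linear term gives
\[
\int_{\RR^n} g\, d\mu_{\eps v} - \int_{\RR^n} g\, d\mu \;\le\; \Big(\int_{\RR^n} |\nabla g|^2\, d\mu\Big)^{1/2} W_2(\mu, \mu_{\eps v}) + \frac{L}{2}\, W_2(\mu, \mu_{\eps v})^2 .
\]
Dividing by $\eps$ and combining with the expansion, it remains to let $\eps \to 0^+$. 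One may assume $\limsup_{\eps \to 0^+} W_2(\mu, \mu_{\eps v})/\eps < \infty$, since otherwise the lemma is vacuous; this forces $W_2(\mu, \mu_{\eps v}) = O(\eps)$, so the quadratic term contributes $O(\eps) \to 0$, leaving $\int \langle x, v\rangle g\, d\mu \le \big(\int |\nabla g|^2 d\mu\big)^{1/2} \limsup_{\eps \to 0^+} W_2(\mu, \mu_{\eps v})/\eps \le \limsup_{\eps \to 0^+} W_2(\mu, \mu_{\eps v})/\eps$. Taking the supremum over admissible $g$ and applying \eqref{eq_1528} then finishes the proof.

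This is essentially the ``easy'' direction of the duality between the $H^{-1}(\mu)$-norm and the infinitesimal $W_2$-distance, so I do not anticipate a real obstacle. The two points that need a little care are the uniform $O(\eps^2)$ control on the density expansion --- which is precisely where compact support of $\mu$ is used --- and the second-order Taylor bound $g(y) - g(x) - \langle \nabla g(x), y - x\rangle \le \tfrac{L}{2}|y-x|^2$, whose role is to let us measure $\nabla g$ in $L^2(\mu)$ rather than along the geodesic joining $\mu$ to $\mu_{\eps v}$, the remainder being swallowed by the vanishing quadratic term.
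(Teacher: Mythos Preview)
Your proposal is correct and follows essentially the same route as the paper's proof: expand the tilted density to first order (using compact support and $\nabla\Lambda_0(0)=0$), pass the difference $\int g\,d\mu_{\eps v}-\int g\,d\mu$ through a second-order Taylor bound on the test function $g$, apply Cauchy--Schwarz against an (optimal) coupling, and then let $\eps\to0$ after reducing to the case where the limsup is finite. The only cosmetic differences are that the paper uses the pointwise Cauchy--Schwarz form $|\nabla\varphi(x)|\cdot|y-x|$ in the Taylor step and takes an arbitrary coupling followed by an infimum, whereas you keep the inner product and fix an optimal coupling; neither changes the substance of the argument.
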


\begin{proof} 
As usual, we write $o(\eps)$ for an expression $X$ such that $X / \eps$ tends to zero as $\eps \to 0$,
while $o(1)$ stands for an expression $X$ that itself tends to zero as $\eps \to 0$. We may assume that
\begin{equation} \label{eq_ffrriznvoen}
W^2_2 (\mu ,\mu_{\epsilon v} )= o(\epsilon) ,
\end{equation}
since otherwise the conclusion of the lemma is vacuous. 
The measure $\mu$ is centered, and from (\ref{eq_1329}) we see that $\nabla \Lambda_0(0) = 0$ and $\Lambda_0(0) = 0$.
Consequently, as $\eps \rightarrow 0$, $$ \Lambda_0(\eps e_i) = o(\eps). $$
Fix a smooth, compactly-supported function $\vphi: \RR^n \rightarrow \RR$. Since $\vphi$ is compactly-supported, for $\eps > 0$,
\begin{equation} \label{eq_1010}
\begin{split}
\int_{\RR^n} \langle x,v\rangle \vphi(x) \, d \mu(x) &
= \int_{\RR^n} \frac{e^{\eps \langle x, v\rangle} - 1}{\eps} \vphi(x)
\, d \mu(x) + o(1)
\\ & = \int_{\RR^n} \frac{e^{\eps \langle x,v\rangle - \Lambda_0(\eps v)} - 1}{\eps}  \vphi(x) \, d \mu( x) + o(1)  \\
& = \frac{1}{\eps} \left[ \int_{\RR^n} \vphi \, d \mu_{\epsilon v} - \int_{\RR^n} \vphi \, d \mu \right] + o(1) .
\end{split}
\end{equation}
Since $\vphi$ is smooth and compactly-supported, by Taylor's theorem there exists $R_0 > 0$ such that for all $x,y \in \RR^n$,
\begin{equation}  \vphi(y) - \vphi(x) \leq |\nabla \vphi(x)| \cdot |y -x| + R_0 |x-y|^2. \label{eq_1715} \end{equation}
Let us momentarily fix $\eps > 0$ and let $X,Y$ be an arbitrary coupling of $\mu$ and $\mu_{\eps v}$, i.e., $X$ has law $\mu$
and $Y$ has law $\mu_{\eps v}$. 
By (\ref{eq_1715}) and the Cauchy-Schwartz inequality,
\[
\begin{split}
\left\vert \int_{\R^n} \phi \, d \mu
- \int_{\R^n} \phi \, d\mu_{\epsilon v} \right\vert &
= \left\vert \EE \vphi(X) - \vphi(Y)  \right\vert \\
& \leq \EE \left[ |\nabla \vphi(X)| \cdot |Y -X| + R_0 |X-Y|^2 \right] \\
& \leq \Vert \nabla \phi \Vert_{L^2(\mu)}
\cdot \sqrt{ \EE |X- Y|^2 } + R_0 \cdot \EE |X- Y|^2.
\end{split}
\]
By considering the infimum over all couplings $X, Y$, we conclude  that for any $\eps > 0$,
$$ \left\vert \int_{\R^n} \phi \, d \mu
- \int_{\R^n} \phi \, d\mu_{\epsilon v} \right\vert 
\leq  \Vert \nabla \phi \Vert_{L^2(\mu)}
\cdot W_2 (\mu , \mu_{\epsilon v} ) + R_0 \cdot W_2 (\mu , \mu_{\epsilon v} )^2. $$
By substituting this back in~(\ref{eq_1010}) we obtain 
\[
\int_{\R^n} \langle x,v\rangle \phi (x) \, d \mu(x)
\leq \epsilon^{-1} \Vert \nabla \phi \Vert_{L^2 (\mu)}
\cdot W_2 (\mu , \mu_{\epsilon v} )+ \epsilon^{-1} R_0
\cdot W_2^2 (\mu, \mu_{\epsilon v} ) + o (1) .
\]
By letting $\epsilon$ tend to $0$ and using (\ref{eq_ffrriznvoen})  we conclude that
for any compactly-supported, smooth function $\phi: \RR^n \rightarrow \RR$,
\[
\int_{\R^n} \langle x,u\rangle \phi (x) \, d \mu(x)
\leq \Vert \nabla \phi \Vert_{ L^2 (\mu) } \cdot
\limsup_{\epsilon \to 0^+} \frac{ W_2 ( \mu , \mu_{\epsilon v} ) }{\epsilon}.
\]
This completes the proof, thanks to the definition (\ref{eq_1528}) of 
the $H^{-1}(\mu)$-norm.
\end{proof}
By applying Lemma~\ref{lem_villani}
for the coordinate vectors $e_1,\dotsc,e_n \in \RR^n$
and combining its conclusion with Corollary~\ref{cor_1521} we
arrive at the following:
\begin{corollary} \label{cor_4701}
Assume that $\mu$ is centered, compactly-supported and log-concave. Then for any $t > 0$,
\[
\sum_{i=1}^n \Vert x_i \Vert_{H^{-1}(\mu)}^2
\leq \frac 1{t^2} \cdot \E \vert M_t \vert^2, 
\]
where $M_t = G_{t,B}'(0) \in \RR^{n \times n}$ and
\[
\vert M_t \vert = \left(\sum_{i=1}^n \vert M_t e_i\vert^2 \right)^{1/2} = \left( \tr[ M_t^* M_t ] \right)^{1/2}
\]
is the Hilbert-Schmidt norm of $M_t$.
\end{corollary}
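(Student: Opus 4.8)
The plan is to apply the two preceding results to each coordinate direction and then sum. First I would fix an index $i \in \{1,\ldots,n\}$ and apply Lemma~\ref{lem_villani} with $v = e_i$; this is legitimate because $\mu$ is assumed centered and compactly-supported, and it gives
\[
\Vert x_i \Vert_{H^{-1}(\mu)} = \Vert \langle x, e_i \rangle \Vert_{H^{-1}(\mu)} \leq \limsup_{\eps \to 0^+} \frac{W_2(\mu, \mu_{\eps e_i})}{\eps}.
\]
Then I would invoke Corollary~\ref{cor_1521}, again with $v = e_i$ and with the fixed parameter $t > 0$ from the statement; this is where log-concavity of $\mu$ enters, via Proposition~\ref{prop_1705} and the monotonicity in Lemma~\ref{lem_1119}. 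It bounds the right-hand side above by $(\EE \vert M_t e_i \vert^2)^{1/2}/t$. Chaining the two inequalities and squaring yields $\Vert x_i \Vert_{H^{-1}(\mu)}^2 \leq \EE \vert M_t e_i \vert^2 / t^2$ for every $i$ and every $t > 0$.

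It then remains to sum over $i = 1,\ldots,n$ and pull the finite sum inside the expectation:
\[
\sum_{i=1}^n \Vert x_i \Vert_{H^{-1}(\mu)}^2 \leq \frac{1}{t^2} \sum_{i=1}^n \EE \vert M_t e_i \vert^2 = \frac{1}{t^2} \EE \sum_{i=1}^n \vert M_t e_i \vert^2 = \frac{1}{t^2} \EE \vert M_t \vert^2,
\]
the last equality being the definition of the Hilbert--Schmidt norm recorded in the statement, $\vert M_t \vert^2 = \sum_i \vert M_t e_i \vert^2 = \Tr[M_t^* M_t]$. This is exactly the asserted inequality.

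Since both ingredients are already in hand, there is essentially no obstacle here; the only point worth noting is that Corollary~\ref{cor_1521} delivers, for each fixed $t$, a bound involving that same $t$ on the right, so the resulting inequality is valid for every $t > 0$ rather than only asymptotically --- precisely the flexible form needed for the subsequent analysis of the covariance process $(A_t)_{t \geq 0}$ of stochastic localization.
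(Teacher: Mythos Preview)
Your proof is correct and matches the paper's approach exactly: the paper simply states that the corollary follows by applying Lemma~\ref{lem_villani} to the coordinate vectors $e_1,\dotsc,e_n$ and combining with Corollary~\ref{cor_1521}, which is precisely the argument you spell out.
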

Most of the remainder of this paper is devoted to estimating $\E \vert M_t \vert^2$ from above. 
Fix a  path $w\in \mathcal C ( [0,\infty) , \R^n )$
and a point $x\in \R^n$, and let $M_t = G_{t,w}'(x)$. Recall from Lemma \ref{lem_1800}
that $(M_t)_{t \geq 0}$ is a $\mathcal C^1$-smooth
function, and that it is the unique solution of the equation
\begin{equation} \label{eq_1216}
\begin{cases}
M_0 = \id \\
\frac d{dt} M_t = A_t M_t
\end{cases}
\end{equation}
where $A_t = A(t, G_{t,w} (x) )$. This equation is
sometimes referred to as the \emph{product integral equation}. In dimension $1$, the solution of (\ref{eq_1216}) is
simply $M_t = \exp ( \int_0^t A_s \,ds )$. This identity does not necessarily hold
in higher dimensions, due to the lack of commutativity,
but nevertheless we  have the following inequality:
\begin{proposition}\label{prop_1813}
Let $(A_t)_{t \geq 0}$ be a continuous path
of symmetric, positive-definite $n \times n$ matrices, and let $(M_t)_{t \geq 0}$ be the
 solution of (\ref{eq_1216}). Denote
the eigenvalues $A_t$, repeated according to their multiplicity, by $\lambda_{1}(t) \geq \ldots \geq \lambda_{n}(t) > 0$. Then for any $t > 0$,
\begin{equation} \vert M_t \vert^2 \leq \sum_{i=1}^n \exp \left( 2 \int_0^t \lambda_{i}(s) ds \right) .
\label{eq_1357} \end{equation}
\end{proposition}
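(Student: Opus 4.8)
The plan is to rephrase the claim in terms of the singular values of $M_t$, to control the partial products of those singular values by passing to exterior powers, and then to assemble the resulting family of estimates via a weak majorization argument. To begin, let $\sigma_1(t) \ge \cdots \ge \sigma_n(t) > 0$ be the singular values of $M_t$, i.e. the square roots of the eigenvalues of $M_t^* M_t$. Then
\[ |M_t|^2 = \Tr[M_t^* M_t] = \sum_{i=1}^n \sigma_i(t)^2, \]
so that (\ref{eq_1357}) is equivalent to $\sum_{i=1}^n \sigma_i(t)^2 \le \sum_{i=1}^n \exp(2 \int_0^t \lambda_i(s)\,ds)$.

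The core of the proof is the estimate
\[ \prod_{i=1}^k \sigma_i(t) \le \exp\Bigl( \int_0^t \bigl( \lambda_1(s) + \cdots + \lambda_k(s) \bigr)\,ds \Bigr) \qquad \text{for every } k \in \{1, \dots, n\}. \]
To establish it, I would fix $k$ and work on the $k$-th exterior power $\bigwedge^k \RR^n$. Differentiating the identity $(\bigwedge^k M_t)(v_1 \wedge \cdots \wedge v_k) = M_t v_1 \wedge \cdots \wedge M_t v_k$ and using (\ref{eq_1216}), one sees that $Y_t := \bigwedge^k M_t$ solves $Y_0 = \id$ and $\dot Y_t = (D_k A_t)\,Y_t$, where $D_k A_t$ is the derivation on $\bigwedge^k \RR^n$ induced by $A_t$, acting by $w_1 \wedge \cdots \wedge w_k \mapsto \sum_j w_1 \wedge \cdots \wedge (A_t w_j) \wedge \cdots \wedge w_k$. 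Diagonalizing $A_t$ in an orthonormal eigenbasis shows that $D_k A_t$ is symmetric on $\bigwedge^k \RR^n$, with eigenvalues $\{ \sum_{i \in I} \lambda_i(t) : I \subseteq \{1, \dots, n\}, |I| = k \}$, the largest of which is $\lambda_1(t) + \cdots + \lambda_k(t)$. Consequently, for any unit vector $w \in \bigwedge^k \RR^n$ one has $\frac{d}{dt} |Y_t w|^2 = 2 \langle (D_k A_t) Y_t w, Y_t w \rangle \le 2 ( \lambda_1(t) + \cdots + \lambda_k(t) ) |Y_t w|^2$, and Gronwall's lemma gives $\|\bigwedge^k M_t\|_{\mathrm{op}}^2 \le \exp(2 \int_0^t (\lambda_1(s) + \cdots + \lambda_k(s))\,ds)$. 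Since the operator norm of $\bigwedge^k M_t$ equals the product $\sigma_1(t) \cdots \sigma_k(t)$ of the $k$ largest singular values of $M_t$ — as one reads off from the singular value decomposition of $M_t$ — the estimate follows.

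To conclude, take logarithms and set $a_i = \log \sigma_i(t)$ and $b_i = \int_0^t \lambda_i(s)\,ds$. Both sequences are non-increasing (the second because $\lambda_1(s) \ge \cdots \ge \lambda_n(s)$ for every $s$), and the estimate just proved says $\sum_{i=1}^k a_i \le \sum_{i=1}^k b_i$ for all $k$. I would then apply the increasing convex function $\phi(x) = e^{2x}$. Writing $e^{2x} = \int_{\RR} (x - c)_+ \cdot 4 e^{2c}\, dc$, it suffices to check that $\sum_{i=1}^n (a_i - c)_+ \le \sum_{i=1}^n (b_i - c)_+$ for every $c \in \RR$; and indeed, since $(a_i)$ and $(b_i)$ are non-increasing, $\sum_i (a_i - c)_+ = \max_{0 \le k \le n} \sum_{i=1}^k (a_i - c) \le \max_{0 \le k \le n} \sum_{i=1}^k (b_i - c) = \sum_i (b_i - c)_+$. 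Therefore $\sum_{i=1}^n \sigma_i(t)^2 = \sum_{i=1}^n e^{2 a_i} \le \sum_{i=1}^n e^{2 b_i} = \sum_{i=1}^n \exp(2 \int_0^t \lambda_i(s)\,ds)$, which is (\ref{eq_1357}).

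The main obstacle is the exterior power step: a direct Gronwall estimate on $\Tr[M_t^* M_t]$ only yields the much weaker bound $|M_t|^2 \le n \exp(2 \int_0^t \lambda_1(s)\,ds)$, so one is forced to control all of the partial products $\sigma_1 \cdots \sigma_k$ simultaneously, which is precisely what the computation on $\bigwedge^k \RR^n$ delivers; the delicate points there are the (standard, but slightly technical) identification of the spectrum of the induced derivation $D_k A_t$ and of the singular values of $\bigwedge^k M_t$. The final combinatorial step is routine, but one should note that we only obtain a weak majorization relation between $(a_i)$ and $(b_i)$, so it is essential that $\phi$ be increasing and not merely convex.
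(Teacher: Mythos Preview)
Your proof is correct and takes a genuinely different route from the paper. The paper never passes to exterior powers; instead it works additively with the eigenvalues $\mu_i(t)$ of $M_t^* M_t$. Using the von Neumann trace inequality it derives, for each $k$, the integral inequality
\[
\sum_{i=1}^k \mu_i(t) \le \sum_{i=1}^k \left[ 1 + 2\int_0^t \mu_i(s)\,\lambda_i(s)\,ds \right],
\]
and then proves a bespoke lemma (an induction on $k$, integrating a Gronwall-type differential inequality at each step) to conclude $\sum_{i=1}^k \mu_i(t) \le \sum_{i=1}^k \exp\bigl(2\int_0^t \lambda_i(s)\,ds\bigr)$ for all $k$. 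Your argument, by contrast, controls the partial \emph{products} $\sigma_1\cdots\sigma_k$ via a single Gronwall estimate on $\bigwedge^k\RR^n$, obtaining weak log-majorization, and then converts this into the desired sum inequality by the standard Hardy--Littlewood--P\'olya device for increasing convex functions. Your approach is more modular and draws on classical matrix-analysis machinery (exterior powers, log-majorization), while the paper's is entirely self-contained and elementary, avoiding multilinear algebra at the price of the ad hoc inductive lemma; the paper's route also yields the stronger family of partial-sum bounds $\sum_{i\le k}\mu_i \le \sum_{i\le k}\exp(2\int_0^t\lambda_i)$ directly, not only the case $k=n$.
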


The proof of Proposition \ref{prop_1813} requires the following lemma.

\begin{lemma} Let $\mu_1(t), \ldots, \mu_n(t)$ and $\lambda_1(t) \geq \ldots \geq \lambda_n(t)$ be non-negative, continuous functions of $t \in [0, \infty)$.
Assume that  for $t \geq 0$ and $k=1,\ldots,n$,
\begin{equation}\label{eq_stepmlljhbv_}
\sum_{i=1}^k \mu_i(t)
\leq \sum_{i=1}^k \left[ 1 + 2 \int_0^t \mu_i (s) \lambda_i (s) \, ds \right].
\end{equation}
Then for $t \geq 0$ and $k=1,\ldots,n$, 
\begin{equation}\label{eq_nvfinvkfvvvff}
\sum_{i=1}^k \mu_i (t) \leq \sum_{i=1}^k \exp \left( 2 \int_0^t \lambda_i (s) \, ds \right).
\end{equation}
\label{lem_2230}
\end{lemma}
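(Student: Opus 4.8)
The plan is to prove \eqref{eq_nvfinvkfvvvff} by induction on $k$. Throughout, write $S_k(t) = \sum_{i=1}^k \mu_i(t)$, let $\nu_i(t) = \exp\bigl( 2\int_0^t \lambda_i(s)\,ds \bigr)$ and $T_k(t) = \sum_{i=1}^k \nu_i(t)$, so that the assertion to prove is $S_k(t) \le T_k(t)$ for all $t \ge 0$. Each $\nu_i$ is $\mathcal C^1$ with $\nu_i(0)=1$ and $\nu_i' = 2\lambda_i \nu_i$, whence
\[
2\int_0^t \lambda_i(s)\,\nu_i(s)\, ds = \nu_i(t) - 1 \qquad (i = 1, \ldots, n).
\]
The base case $k=1$ is immediate: hypothesis \eqref{eq_stepmlljhbv_} with $k=1$ reads $\mu_1(t) \le 1 + 2\int_0^t \lambda_1 \mu_1 \, ds$, and Gronwall's lemma gives $\mu_1(t) \le \nu_1(t) = T_1(t)$.

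For the inductive step, fix $k \ge 2$ and assume $S_j \le T_j$ on $[0,\infty)$ for every $j \le k-1$. The key device is summation by parts. Since $\lambda_1(s) \ge \cdots \ge \lambda_k(s) \ge 0$, writing $\mu_i = S_i - S_{i-1}$ with the convention $S_0 \equiv 0$ gives
\[
\sum_{i=1}^k \mu_i(s)\lambda_i(s) \;=\; \lambda_k(s)\, S_k(s) + \sum_{j=1}^{k-1}\bigl(\lambda_j(s)-\lambda_{j+1}(s)\bigr)\, S_j(s),
\]
in which all the coefficients $\lambda_k(s)$ and $\lambda_j(s) - \lambda_{j+1}(s)$ are non-negative. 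Substituting this into \eqref{eq_stepmlljhbv_}, bounding $S_j \le T_j$ for $j \le k-1$ while leaving the $S_k$-term untouched, then expanding $T_j = \sum_{l \le j}\nu_l$ and interchanging the order of summation so that $\sum_{j=1}^{k-1}(\lambda_j - \lambda_{j+1})T_j = \sum_{l=1}^{k-1}(\lambda_l - \lambda_k)\nu_l$, and finally invoking the displayed identity together with the telescoping $\sum_{l=1}^{k-1}(\nu_l(t) - 1) = T_{k-1}(t) - (k-1)$, the right-hand side collapses and one is left with
\[
S_k(t) \;\le\; 1 + T_{k-1}(t) + 2\int_0^t \lambda_k(s)\bigl(S_k(s) - T_{k-1}(s)\bigr)\,ds.
\]

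To finish, I would set $D(t) = S_k(t) - T_{k-1}(t)$; the last display becomes $D(t) \le 1 + 2\int_0^t \lambda_k(s) D(s)\, ds$, and Gronwall's lemma — whose standard proof (via $t \mapsto \int_0^t \lambda_k D$) is insensitive to the sign of $D$ — yields $D(t) \le \exp\bigl(2\int_0^t\lambda_k(s)\,ds\bigr) = \nu_k(t)$, that is, $S_k(t) \le T_{k-1}(t) + \nu_k(t) = T_k(t)$, completing the induction. The one point worth flagging is the exact cancellation producing that last display: applying Gronwall to $S_k$ directly, with the whole $j\le k-1$ contribution regarded as a (monotone) forcing term, turns out to be too lossy and overshoots $T_k$. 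What rescues the argument is precisely the identity $2\int_0^t\lambda_i\nu_i = \nu_i(t)-1$, which collapses the forcing term — after subtracting $T_{k-1}(t)$ — to the constant $1$, reducing the whole estimate to an elementary constant-coefficient Gronwall inequality for $S_k - T_{k-1}$.
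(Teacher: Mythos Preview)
Your proof is correct and follows essentially the same approach as the paper: induction on $k$, combined with Abel summation to exploit the ordering $\lambda_1 \ge \cdots \ge \lambda_n$, followed by a Gronwall-type integration. The paper packages the argument slightly differently---it introduces the auxiliary quantities $1 + 2\int_0^t \mu_i\lambda_i$ and works with their derivative, ultimately integrating a differential inequality for $e^{-2\int_0^t\lambda_k}\sum_{i\le k}\nu_i$---but this is exactly equivalent to your Gronwall step applied to $D = S_k - T_{k-1}$, and the two arguments are really the same proof in different clothing.
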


\begin{proof} Denote
\begin{equation}  \nu_i(t) = 1 + 2 \int_0^t \mu_i (s) \lambda_i (s) \, ds. \label{eq_1917} \end{equation}
According to (\ref{eq_stepmlljhbv_}), for all $k$ and $t$,
\begin{equation}\label{eq_stepmlljhbv}
\sum_{i=1}^k \mu_i(t)
\leq \sum_{i=1}^k \nu_i(t).
\end{equation}
We will prove (\ref{eq_nvfinvkfvvvff}) by induction on $k$. Consider first the case $k=1$.
Note that $\nu_1$ is $\mathcal C^1$-smooth in $t$, and
that by (\ref{eq_1917}) and the case $k=1$  of (\ref{eq_stepmlljhbv}) we have
\[
\frac d{dt} \nu_1 (t) = 2 \mu_1 (t) \lambda_1 (t)
\leq 2 \nu_1(t) \lambda_1 (t) .
\]
By integrating this differential inequality and
using ~\eqref{eq_stepmlljhbv} we obtain 
\[
\mu_1 (t) \leq \nu_1 (t) \leq
\exp \left( 2 \int_0^t \lambda_1 (s) \, ds \right).
\]
This is precisely the case $k=1$ of the desired inequality (\ref{eq_nvfinvkfvvvff}).
Next, let $k \geq 2$ and assume that (\ref{eq_nvfinvkfvvvff})
holds true all the way up
to $k-1$. Observe that since
\[
\lambda_1 (t) -\lambda_k (t) \geq \lambda_2(t) - \lambda_k(t) \geq \dotsb \geq \lambda_{k-1} (t) - \lambda_k (t) \geq 0 ,
\]
the induction hypothesis implies that
\[
\sum_{i=1}^{k-1} \mu_i (t) (\lambda_i (t) - \lambda_k (t) )
\leq \sum_{i=1}^{k-1} \exp \left( 2 \int_0^t \lambda_i(s) \,ds  \right)
(\lambda_i (t) - \lambda_k (t) ) .
\]
By combining this with (\ref{eq_1917}) and \eqref{eq_stepmlljhbv} we obtain
\[
\begin{split}
\frac{d}{dt} \sum_{i=1}^k \nu_i (t)
& = 2 \sum_{i=1}^k \mu_i (t) \lambda_i (t) \\
& = 2 \sum_{i=1}^{k-1} \mu_i (t) (\lambda_i (t) - \lambda_k (t) )
 + 2 \left( \sum_{i=1}^k \mu_i (t) \right) \lambda_k (t)  \\
& \leq 2 \sum_{i=1}^{k-1} \exp \left( 2 \int_0^t \lambda_i (s) \, ds \right)
 ( \lambda_i (t) - \lambda_k (t) ) + 2 \left( \sum_{i=1}^k \nu_i (t) \right) \lambda_k (t) .
\end{split}
\]
Elementary manipulations show that the last inequality can be reformulated
as
\[
\frac d{dt} \left( \exp \left( -2 \int_0^t \lambda_k (s) \, ds \right) \sum_{i=1}^k \nu_i (t) \right) \leq
\frac{d}{dt} \left( \sum_{i=1}^{k-1} \exp \left( 2 \int_0^t (\lambda_i (s) -\lambda_k (s)) \, ds \right) \right) .
\]
Integrating, and recalling that $\nu_i (0) = 1$ for  $1 \leq i\leq n$
we obtain 
\[
\exp \left( -2 \int_0^t \lambda_k (s) \, ds \right) \sum_{i=1}^k \nu_i (t)  \leq 1 +
\sum_{i=1}^{k-1} \exp \left( 2 \int_0^t (\lambda_i (s) -\lambda_k (s)) \, ds \right) .
\]
Recalling (\ref{eq_stepmlljhbv}) we finally deduce that 
\[
\sum_{i=1}^k \mu_i (t) \leq \sum_{i=1}^k \nu_i (t) \leq
\sum_{i=1}^k \exp \left( 2 \int_0^t \lambda_i (t) \, ds \right).
\]
This completes the proof.
\end{proof}

Let $A, B \in \RR^{n \times n}$ be symmetric, positive semi-definite matrices,
and write $a_1 \geq \ldots \geq a_n$ for the eigenvalues of $A$ while $b_1 \geq \ldots \geq b_n$ are the eigenvalues of $B$.
Then,
\begin{equation}  \Tr[A B] \leq \sum_{i=1}^n a_i b_i.
\label{eq_2124} \end{equation}
This inequality is proven e.g. in \cite[Theorem 8.7.6]{HJ}, where it is referred to as the von Neumann trace inequality.

\begin{proof}[Proof of Proposition \ref{prop_1813}]
Since $A_t$ is a symmetric matrix, it follows from (\ref{eq_1216}) that
$$ \frac{d}{dt} M_t^* M_t = 2 M_t^* A_t M_t. $$
Denote the eigenvalues of $M_t^* M_t$ by $\mu_1(t) \geq \ldots \geq \mu_n(t) \geq 0$,
and note that these are also the eigenvalues of $M_t M_t^*$.
Let $1 \leq k \leq n$ and let $P \in \RR^{n \times n}$ be an orthogonal projection matrix  of rank $k$.
Write $p_1(t) \geq \ldots \geq p_n(t) \geq 0$ for the eigenvalues of $M_t P M_t^*$.
Then,
\begin{align}  \nonumber \frac{d}{dt} \tr[ M_t^* M_t P] & = 2 \tr[ M_t^* A_t M_t P] = 2 \tr[ A_t (M_t P M_t^*) ] \\ & \leq 2 \sum_{i=1}^n \lambda_i(t) p_i(t)
= 2 \sum_{i=1}^k \lambda_i(t) p_i(t), \label{eq_2220} \end{align}
according to (\ref{eq_2124}), where we note that $p_i(t) = 0$ for $i > k$ since the matrix $M_t P M_t^*$ has rank at most $k$.
By the min-max characterization of the eigenvalues of a symmetric matrix,
\begin{equation}\label{eq_2221}
\begin{split}  p_i(t) & = \max_{E \in G_{n,i}} \min_{0 \neq v \in E} \frac{\langle M_t P M_t^* v, v \rangle}{|v|^2}
= \max_{E \in G_{n,i}} \min_{0 \neq v \in E} \frac{|P M_t^* v|^2}{|v|^2}
\\ & \leq \max_{E \in G_{n,i}} \min_{0 \neq v \in E} \frac{|M_t^* v|^2}{|v|^2} = \max_{E \in G_{n,i}} \min_{0 \neq v \in E} \frac{\langle M_t M_t^* v, v \rangle}{|v|^2} = \mu_i(t),
\end{split}
\end{equation}
where $G_{n,i}$ is the collection of all $i$-dimensional subspaces of $\RR^n$.
Recall that $M_0 = \id$. Hence, by integrating  (\ref{eq_2220}) and using (\ref{eq_2221}),
\begin{equation}
\tr[ M_t^* M_t P ] \leq \tr[P] + 2 \sum_{i=1}^k \int_0^t \lambda_i(s) \mu_i(s) ds
= \sum_{i=1}^k \left[ 1 + 2 \int_0^t \lambda_i(s) \mu_i(s) ds \right].
\label{eq_2223}
\end{equation}
Inequality (\ref{eq_2223}) is valid in particular for the orthogonal projection $P$
onto the span of the $k$ eigenvectors of $M_t^* M_t$ that correspond to the eigenvalues $\mu_1(t),\ldots,\mu_k(t)$. It thus follows from (\ref{eq_2223}) that
for $k=1,\ldots,n$,
\begin{equation}
\sum_{i=1}^k \mu_i(t) \leq \sum_{i=1}^k \left[ 1 + 2 \int_0^t \lambda_i(s) \mu_i(s) ds \right].
\label{eq_2224}
\end{equation}
Inequality (\ref{eq_2224}) is precisely the assumption (\ref{eq_stepmlljhbv_})
of Lemma \ref{lem_2230}. Since $A_t$ and $M_t^* M_t$ vary continuously with $t$,
the eigenvalues $\lambda_1(t) \geq \ldots \geq \lambda_n(t) \geq 0$ and 
$\mu_1(t) \geq \ldots \geq \mu_n(t)$ vary continuously with $t$ as well.
We may therefore apply Lemma \ref{lem_2230} with $k =n$
and conclude that
$$ \Tr[M_t^* M_t] = \sum_{i=1}^n \mu_i (t) \leq \sum_{i=1}^n \exp \left( 2 \int_0^t \lambda_i (s) \, ds \right), $$
which is the desired inequality (\ref{eq_1357}).
\end{proof}

\begin{remark} We may deduce from (\ref{eq_1357}) through Jensen's inequality the arguably simpler bound
\[
|M_t|^2 \leq \frac{1}{t} \int_0^t \Tr \left[ e^{2 t A_s} \right] ds.
\]
However, it is the more sophisticated inequality (\ref{eq_1357}) that is needed for the proof below.
\end{remark}
\begin{remark} \label{rem_1156} Let $\mu$ be a log-concave probability measure in $\RR^n$, and
let $\gamma$ be the standard Gaussian probability
	measure in $\RR^n$. By using the couplings discussed above and the bound $A_t \leq \id / t$, one may prove a peculiar bound about the
	exponential tilts of $\mu$ and those of $\mu * \gamma$. Namely, for any $\theta_1 , \theta_2 \in \R^n$ and $p \geq 1$ we have the bound
$$
	W_p \left( \mu_{\theta_1}, \mu_{\theta_2} \right) \leq W_p \left( (\mu * \gamma)_{\theta_1}, (\mu* \gamma)_{\theta_2} \right) .
		$$
We omit the details of the proof.
\end{remark}
By combining  Corollary \ref{cor_4701} and
Proposition \ref{prop_1813} we  obtain the following:
\begin{corollary}\label{cor_311} Let $\mu$ be a compactly-supported, centered, log-concave probability measure in $\RR^n$.
Then for any fixed $t > 0$,
\begin{equation}  \sum_{i=1}^n \| x_i \|_{H^{-1}(\mu)}^2 \leq \frac{1}{t^2} \cdot \EE \left[  \sum_{i=1}^n  \exp \left( 2 \int_0^t \lambda_{i} (s) ds \right) \right], 
\label{eq_1933} \end{equation}
where $\lambda_{1} (t) \geq \dotsb \geq \lambda_n (t) >0$
are the eigenvalues of the matrix
\[
A_t = \nabla^2 \Lambda_t ( G_{t,B} (0) ) ,
\]
and $B = (B_t)_{t \geq 0}$ is a standard Brownian motion in $\RR^n$ with $B_0 = 0$.
\end{corollary}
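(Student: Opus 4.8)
The corollary is a direct synthesis of the two preceding results, so the plan is simply to run Proposition \ref{prop_1813} along each fixed Brownian trajectory and then take an expectation. First I would fix $t > 0$ and recall from the statement of Corollary \ref{cor_4701} that $M_t = G_{t,B}'(0) \in \RR^{n \times n}$. By Lemma \ref{lem_1800}, applied to the almost surely continuous path $w = B$, the matrix-valued function $s \mapsto M_s = G_{s,B}'(0)$ solves the product integral equation (\ref{eq_1216}) with
$$ A_s = A\bigl(s, G_{s,B}(0)\bigr) = \nabla^2 \Lambda_s\bigl(G_{s,B}(0)\bigr). $$
Since the support of $\mu$ affinely spans $\RR^n$, formula (\ref{eq_1122}) shows that each $A_s$ is symmetric and positive-definite; moreover $s \mapsto G_{s,B}(0)$ is continuous by Lemma \ref{lem_1800} and $\theta \mapsto \nabla^2 \Lambda_s(\theta)$ is smooth, so $(A_s)_{s \geq 0}$ is a continuous path of symmetric, positive-definite matrices whose eigenvalues $\lambda_1(s) \geq \dots \geq \lambda_n(s) > 0$ depend continuously on $s$ and are exactly the functions appearing in the statement.

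Next I would invoke Proposition \ref{prop_1813} for this fixed trajectory, obtaining, almost surely,
$$ |M_t|^2 \leq \sum_{i=1}^n \exp\left( 2 \int_0^t \lambda_i(s)\, ds \right). $$
Both sides are bounded measurable functions of the Brownian path: the left-hand side because $G_{t,B}$ is $\e^{C_\mu t}$-Lipschitz by Lemma \ref{lem_1400}, so $|M_t|^2 \leq n \e^{2 C_\mu t}$; the right-hand side because $\lambda_i(s) \leq C_\mu$ by (\ref{eq_727}). Hence taking expectations preserves the inequality, and
$$ \EE |M_t|^2 \leq \EE\left[ \sum_{i=1}^n \exp\left( 2 \int_0^t \lambda_i(s)\, ds \right) \right]. $$

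Finally I would combine this with the bound $\sum_{i=1}^n \| x_i \|_{H^{-1}(\mu)}^2 \leq t^{-2}\, \EE |M_t|^2$ supplied by Corollary \ref{cor_4701}, which yields (\ref{eq_1933}) at once. There is essentially no obstacle to overcome: all the substance already sits in Corollary \ref{cor_4701} (the Wasserstein/$H^{-1}$ chain of inequalities) and in Proposition \ref{prop_1813} (the product-integral estimate). The only points that genuinely need a line of justification are that the random path $(A_s)$ satisfies the hypotheses of Proposition \ref{prop_1813} trajectory-by-trajectory — continuity and positive-definiteness, both of which follow from Lemma \ref{lem_1800} and (\ref{eq_1122}) — and that the pointwise inequality it produces is between (bounded, hence integrable) quantities, so that passing to expectations is legitimate.
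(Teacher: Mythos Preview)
Your proposal is correct and matches the paper's own proof, which is literally the one-line remark that Corollary \ref{cor_311} follows by combining Corollary \ref{cor_4701} with Proposition \ref{prop_1813}. The extra care you take in checking continuity and positive-definiteness of $(A_s)$ and the integrability of both sides is appropriate but routine, and does not depart from the intended argument.
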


%Theorem \ref{thm_guantype} below bounds the expression on the right-hand side of (\ref{eq_1933})
%in the case where $\mu$ is {\it isotropic}. Its proof requires Proposition \ref{prop_vnis}
%which  is stated and proved in the next section.

\section{The covariance process of stochastic localization}
\label{sec4}

Let $\mu$ be an  isotropic, compactly-supported, log-concave probability measure in $\RR^n$,
and let $(B_t)_{t \geq 0}$ be a standard Brownian motion in $\R^n$ with $B_0 = 0$.
We let $(\theta_t)_{t \geq 0}$ be the stochastic process given by
\[
\theta_t = G_{t,B} (0) , \qquad  t \geq 0.
\]
The measure-valued process $(\mu_t)_{t \geq 0}$ defined via
\[
\mu_t = \mu_{t,\theta_t} , \qquad t \geq 0
\]
is called \emph{the stochastic localization process} of $\mu$.
Since $\mu$ is
log-concave, almost surely for any $t > 0$ the probability measure $\mu_t$ is  $t$-uniformly log-concave (see e.g. \cite{K_root} for the simple explanation). As before, we let $a_t = a(t, \theta_t)$ be the barycenter process:
\[
a_t = \int_{\R^n} x \, d \mu_t (x) = \nabla \Lambda_t ( \theta_t ) ,
\qquad  t\geq 0,
\]
while $A_t = A(t, \theta_t)$ is the covariance process:
\[
A_t = \cov ( \mu_t ) = \nabla^2 \Lambda_t ( \theta_t ) , \qquad t\geq  0.
\]
%
%, and $(A_t)$ is
%the associated covariance process. For convenience, we also assume that
%$\mu$ has compact support, and  we let $R$ be the outer radius of the support of
%$\mu$. Note that by definition $\mu_t$ is supported on the same set as $\mu$,
%so this hypothesis implies in particular that almost surely
%\begin{equation}\label{eq_bounded_radius}
% A_t \leq R^2 \cdot \id , \quad \forall t >0  .
%\end{equation}
Recall that we denote by $\lambda_1 (t) \geq \dotsb \geq \lambda_n (t)>0$ the eigenvalues of $A_t$. In Guan~\cite{guan} it is proved that
\begin{equation}\label{eq_guan}
\E \Tr A_t^2 = \E \left[ \sum_{i=1}^n \lambda_i (t)^2 \right] \leq C n , \qquad \forall t >0 .
\end{equation}
In this section we prove a result of the same flavor, which reads as follows:
\begin{proposition} \label{prop_vnis}
Let $\tau$ be a stopping time, with respect to the natural filtration of the Brownian motion $(B_t)_{t \geq 0}$.
Then for any fixed $t > 0$,
\[
\sum_{i=1}^n \prob ( \lambda_i (t\wedge \tau ) \geq 3 )
\leq C n \cdot \exp ( - t^{-\alpha} ) ,
\]
where $C, \alpha > 0$ are universal constants.
Our proof yields $\alpha = 1/8$.
\end{proposition}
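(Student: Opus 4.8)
The plan is to run a potential-function argument on the covariance process $(A_s)_{s \geq 0}$ of stochastic localization, exploiting that $A_0 = \id$ by isotropy of $\mu$, so that every eigenvalue starts at $1$, while we must show that few of them reach $3$ by the small (possibly stopped) time $t$. Recall the dynamics: if $T_s$ denotes the third moment tensor of $\mu_s$, then It\^o's formula gives the matrix stochastic differential equation $dA_s = -A_s^2\,ds + dN_s$, where $(N_s)$ is a matrix-valued martingale whose quadratic variation is built from $T_s$ (namely $d[N]^{(ij),(kl)}_s = \sum_m (T_s)_{ijm}(T_s)_{klm}\,ds$). Since $A_s \le \id/s$ by (\ref{eq_1817}), the statement is trivial for $t \ge 1/3$, so its content is a quantitative, stopping-robust form of the fact that $A_{t\wedge\tau}$ stays close to $\id$ for small $t$.

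Fix $t \in (0,1/3)$ and a large parameter $p = p_t$, to be chosen with $p_t \asymp t^{-\alpha}$, and consider the smooth spectral potential
\[
\Phi_s \;=\; \Tr\!\big[\psi_p(A_s)\big] \;=\; \sum_{i=1}^n \psi_p\big(\lambda_i(s)\big),
\qquad \psi_p(\lambda) = (\lambda/2)^{p}.
\]
Since $\psi_p$ is increasing and convex with $\psi_p(1) = 2^{-p}$ and $\psi_p(3) = (3/2)^p \ge 1$, we have $\Phi_0 = n\,2^{-p}$ and $\#\{i : \lambda_i(s) \ge 3\} \le (2/3)^{p}\Phi_s \le \Phi_s$. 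As $A \mapsto \Tr[\psi_p(A)]$ is a smooth function of the symmetric matrix $A$, It\^o's formula applies with no difficulty at eigenvalue collisions and gives
\[
d\Phi_s \;=\; \Big(\!-\Tr\big[\psi_p'(A_s)A_s^2\big] + \tfrac12\sum_{i,j=1}^n \psi_p^{[2]}(\lambda_i,\lambda_j)\sum_m (T_s)_{ijm}^2 \Big)\,ds + d(\mathrm{martingale}),
\]
where $\psi_p^{[2]}(\lambda_i,\lambda_j)$ is the divided-difference weight produced by It\^o's formula (equal to $\psi_p''(\lambda_i)$ when $\lambda_i=\lambda_j$), taken in the eigenbasis of $A_s$. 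The first drift term is $\le 0$ since $\psi_p'\ge 0$, and the It\^o correction is $\ge 0$ since $\psi_p$ is convex. The aim is to show that the It\^o correction is absorbed by a fixed multiple of the negative drift plus an error of the form $c_p\,\Phi_s$, so that $s\mapsto e^{-c_p s}\Phi_s$ becomes a supermartingale.

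The heart of the matter, and the step I expect to be the main obstacle, is the bound on the It\^o correction -- exactly where ``a variant of Guan's technique'' is needed. The naive estimate $|(T_s)_{ijm}| \le C\sqrt{\lambda_i\lambda_j\lambda_m}$ from one-dimensional reverse H\"older inequalities is far too weak; one must instead use the finer third-moment (improved Lichnerowicz) estimates for log-concave, $s$-uniformly log-concave measures, and sum over all eigendirections so that the total third-moment mass is controlled through $\Tr A_s^2$, whose expectation is bounded by Guan's inequality (\ref{eq_guan}). Two further delicate points: the weights $\psi_p^{[2]}(\lambda_i,\lambda_j)$ grow like $p^2$ (times a power of the larger eigenvalue), which must be beaten down; and $s$-uniform log-concavity only gives $\lambda_1(s)\le 1/s$, singular as $s\to0^+$, so the estimate must be organized to avoid this factor -- and, if necessary, supplemented by an auxiliary stopping at the first time $\lambda_1$ exceeds a fixed constant, on whose complement the asserted count is zero. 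Tracking the $p$-dependence through this bookkeeping and then balancing the accumulated-error factor $e^{c_p t}$ against $\Phi_0 = n\,2^{-p}$, optimized over $p = p_t$, is what fixes the exponent $\alpha = 1/8$.

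Finally, the stopping time $\tau$ causes no difficulty: the supermartingale property of $e^{-c_p s}\Phi_s$ is stable under stopping, so optional stopping at $t\wedge\tau$ yields $\E[e^{-c_p(t\wedge\tau)}\Phi_{t\wedge\tau}] \le \Phi_0$, hence $\E\,\Phi_{t\wedge\tau} \le e^{c_p t}\Phi_0 \le n\exp(-c\,t^{-\alpha})$ for the optimized $p$. Therefore
\[
\sum_{i=1}^n \PP\big(\lambda_i(t\wedge\tau)\ge 3\big) = \E\big[\#\{i : \lambda_i(t\wedge\tau)\ge 3\}\big] \le \E\,\Phi_{t\wedge\tau} \le C\,n\exp(-c\,t^{-\alpha}),
\]
which, after renaming constants, is the claim.
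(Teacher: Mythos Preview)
Your proposal captures the right starting point---a spectral potential evolving under It\^o's formula, with the Daleckii--Krein divided-difference weights and Guan's third-moment control---but it has two genuine gaps, and the sketch as written will not close.

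First, the pure power potential $\psi_p(\lambda)=(\lambda/2)^p$ is too crude; this is essentially Chen's choice $f(x)=x^q$, which the paper explicitly contrasts with Guan's approach. The trouble is that $\psi_p^{[2]}(\lambda_i,\lambda_j)$ carries a factor $\sim p^2$ (times a large power of $\lambda_{\max}$) even in the regime where both eigenvalues are large, and Guan's 3-tensor bound does not compensate for this. The paper instead uses a hybrid function $f_{D,r}$ that equals $x^2$ for $x\ge r$ and $e^{D(x-r)}$ for $x\le r-D^{-1}$; the point of the quadratic top is that $f^{[2]}\equiv 2$ when both eigenvalues exceed $r$, so the dangerous ``large--large'' contribution is decoupled from the parameter $D$ and can be handled by Guan's lemma alone, while the parameter $D$ only enters the ``small--small'' and mixed regimes. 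Your sentence ``the weights grow like $p^2$, which must be beaten down'' flags the problem but offers no mechanism.

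Second, there is no single-shot supermartingale that works. With the correct potential, the differential inequality one actually obtains is
\[
\frac{d}{ds}\,\E\Tr f(A_{s\wedge\tau})\ \le\ C\Big(\frac{1}{s}+\frac{D^2}{\sqrt{s}}\Big)\,\E\Tr f(A_{s\wedge\tau}),
\]
and the $1/s$ term is not integrable down to $0$, so you cannot start from $\Phi_0$ and integrate once. The paper runs an iterative bootstrap over the geometric sequence $t_k=2^{-8k}t$, choosing $D_k=t_k^{-1/4}$ and shifting the threshold $r_k$ by $t_k^{1/8}$ at each step; integrating over $[t_{k+1},t_k]$ costs only a fixed factor, and the change of potential from $f_k$ to $f_{k+1}$ produces an additive error $n\exp(-t_k^{-1/8})$. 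Summing the resulting recursion and using the qualitative short-time estimate (\ref{eq_1738}) to kill the boundary term as $k\to\infty$ is what yields $Cn\exp(-t^{-1/8})$; the exponent $\alpha=1/8$ comes from this scale-by-scale balance, not from optimizing a single $p$. Incidentally, the appeal to Guan's inequality~(\ref{eq_guan}) for $\E\Tr A_s^2$ is misplaced: that estimate is not used here (and is morally downstream of this proposition); what is used is Guan's \emph{pointwise} 3-tensor bound (Lemma~\ref{lem_guan}).
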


Consider the stopping time 
\begin{equation}\label{eq_deftaustar}
\tau_* = \inf \{ t > 0 \, ; \,  \Vert A_t \Vert_{op} \geq 2 \} ,
\end{equation}
where $\| \cdot \|_{op}$ is the operator norm, i.e., $\Vert A_t \Vert_{op} = \lambda_1 (t)$.
It is known (see for instance \cite[Section 7]{KLbull} or references therein) that
\begin{equation}
\label{eq_1737}
\prob ( \tau_* \leq t ) \leq \exp (- c\cdot t^{-1} ) ,
\qquad \forall t \leq c(\log n)^{-2}.
\end{equation}
We do not need the full strength of the estimate (\ref{eq_1737}) in our argument below. Rather,
we will use a much simpler qualitative fact, that 
\begin{equation}\label{eq_1738}
\prob ( \tau_* \leq t ) = o (t^k) , \qquad \forall k \geq 1. 
\end{equation}
Nevertheless, note that if~\eqref{eq_1737} were  true
for any time $t$, and not just in the range $[0,c \cdot (\log n)^{-2}]$,
then Proposition~\ref{prop_vnis} would follow
from the obvious inequalities
\[
\prob ( \lambda_i (t \wedge \tau ) \geq 3 ) \leq
\prob ( \lambda_i (t \wedge \tau ) \geq 2 )
\leq \prob ( \Vert A_{t\wedge \tau} \Vert_{op} \geq 2 )
\leq \prob ( \tau_* \leq t ) .
\]
However, such an optimistic estimate
for the operator norm of $A_t$ cannot be true in general,
see~\cite[section 8.1]{KLbull}.  Thus, short-time bounds such as (\ref{eq_1737}) are inadequate 
for proving Proposition~\ref{prop_vnis}, 
and we need to use {\it growth regularity estimates}, which are estimates showing that the matrix $A_t$ cannot grow too wildly on small intervals. 
Such estimates were established in Chen \cite{chen} and later in Guan \cite{guan}, and 
the proof of Proposition \ref{prop_vnis} 
relies heavily on \cite{guan}. In fact, in the case where $\tau \equiv +\infty$, the conclusion of Proposition \ref{prop_vnis} 
essentially follows from Guan's argument in \cite{guan}.

\medskip The proof of Proposition \ref{prop_vnis} occupies the remainder of this section. 
We begin with Guan's bound on $3$-tensors from \cite{guan}, whose proof is provided for completeness:
\begin{lemma}\label{lem_guan} Let $t > 0$ and suppose that $X$ is a centered, $t$-uniformly log-concave random vector in $\R^n$. 
Let $\lambda_1,\ldots,\lambda_n \in \RR$ be the eigenvalues of $\cov(X)$
and let $v_1,\ldots,v_n \in \RR^n$ be a corresponding orthonormal basis of eigenvectors. 
Abbreviate $X_i = \langle X, v_i \rangle$. Then for  $1 \leq k \leq n$ and $u>0$,
$$ \sum_{i,j=1}^{n}  ( \E X_i X_j X_k )^2 \mathbbm 1_{\{ \max(\lambda_i,\lambda_j) \leq u \}}  \leq 4 t^{-1/2} u^{3/2} \lambda_k. $$
\end{lemma}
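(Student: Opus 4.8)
The plan is to integrate by parts against the log-concave density and exploit the uniform convexity. Write $\rho = e^{-V}$ for the density of $X$, where $V$ is a convex function with $\nabla^2 V \geq t \cdot \mathrm{Id}$ by the $t$-uniform log-concavity assumption. The key identity is the integration-by-parts formula: for nice functions $f, g$ one has $\int f \, \partial_\ell g \, \rho = -\int (\partial_\ell f) g \, \rho + \int f g \, \partial_\ell V \, \rho$. Applying this with $f = x_i x_j$ and $g = x_k$ (integrating in the $v_\ell$ direction for a suitable index $\ell$) will produce on one side the third moment $\E X_i X_j X_k$ and on the other side first- and second-moment terms together with a term involving $\partial_\ell V$. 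Since $X$ is centered and $\cov(X)$ is diagonal in the basis $v_1, \dots, v_n$ with $\E X_i X_j = \lambda_i \delta_{ij}$, the lower-order terms collapse considerably.

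First I would set up the right weighting: rather than estimating each $\E X_i X_j X_k$ individually, I would test against the vector field $W = \sum_{i,j : \max(\lambda_i,\lambda_j)\le u} c_{ij} X_i X_j v_j$ for optimally chosen coefficients $c_{ij}$, or more cleanly, fix $k$ and consider the function $h = X_k$ paired with the quadratic $q = \sum_{i,j} (\E X_i X_j X_k)\,\mathbbm{1}_{\{\max(\lambda_i,\lambda_j)\le u\}}\, X_i X_j$. The Brascamp–Lieb / Poincaré-type inequality for the $t$-uniformly log-concave measure — namely $\var_\mu(h) \le \frac1t \E|\nabla h|^2$, or more precisely the weighted version $\int (\partial_k h)(\nabla^2 V)^{-1}(\partial_k h) \, d\mu \ge \var(h)$ — is the analytic engine. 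One would like to bound $\sum_{i,j}(\E X_i X_j X_k)^2 \mathbbm{1}_{\{\cdots\}}$ from above by something like $\|q\|_{L^2(\mu)} \cdot \E X_k^2$ after a Cauchy–Schwarz step, and then control $\|q\|_{L^2(\mu)}$: here the truncation $\max(\lambda_i, \lambda_j) \le u$ is essential because it lets one bound the fourth-moment-type quantity $\E q^2$ by roughly $u \cdot (\text{trace of covariance on the truncated block}) \cdot \lambda_k$, and the factor $t^{-1/2}$ enters through one application of the $t$-uniform convexity in the Brascamp–Lieb step, which gains a $t^{-1}$ that gets halved by a Cauchy–Schwarz between the "gradient" and "Hessian-weighted" norms.

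Concretely, the main computation I expect is: apply Brascamp–Lieb to the quadratic $q$ (whose mean is $\E q = \sum_i (\E X_i^2 X_k)\,\mathbbm{1}_{\{\lambda_i \le u\}}\lambda_i$, a term that must be tracked or shown negligible), getting $\var(q) \le t^{-1}\E|\nabla q|^2$; then $\nabla q$ is linear in $X$ so $\E|\nabla q|^2$ is a quadratic form in the coefficients $\E X_i X_j X_k$ with weights controlled by $u$ on the truncated support; matching this against $\sum(\E X_i X_j X_k)^2\mathbbm 1$ itself via the defining property $\sum (\E X_iX_jX_k)^2\mathbbm 1 = \E[q \cdot X_k] - (\E q)\E X_k = \E[q(X_k - \E X_k)] \le \|q - \E q\|_{L^2}\,\lambda_k^{1/2}$ closes a self-improving inequality of the form $S \le C t^{-1/2}u^{3/2}\lambda_k^{1/2} S^{1/2}$, which gives $S \le (Ct^{-1/2}u^{3/2}\lambda_k^{1/2})^2 = C^2 t^{-1}u^3\lambda_k$; recovering the stated $4t^{-1/2}u^{3/2}\lambda_k$ instead requires doing the Cauchy–Schwarz more carefully so that only one half-power of $t^{-1}$ and of $u^3$ survives — likely by splitting the gradient of $q$ into its $v_k$-component and the rest, and only using uniform convexity in the $v_k$ direction.

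The main obstacle I anticipate is getting the exponents and the explicit constant $4$ exactly right: the naive Brascamp–Lieb route over-counts, so one has to be surgical about which direction the uniform convexity is spent in (the $k$-th direction only, to get $t^{-1/2}$ rather than $t^{-1}$), about separating diagonal ($i=j$) from off-diagonal contributions in $q$, and about the role of the mean term $\E q$. A secondary subtlety is justifying the integration by parts / Brascamp–Lieb for a possibly non-smooth compactly-supported log-concave density, but that is handled by the standard approximation arguments already invoked elsewhere in the paper (smoothing by a small Gaussian, using density of $\mathcal C_c^\infty$).
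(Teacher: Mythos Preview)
Your overall architecture is right and matches the paper: set $S = \sum_{i,j}(\E X_iX_jX_k)^2\mathbbm 1_{\{\max(\lambda_i,\lambda_j)\le u\}}$, recognize $S = \E[X_k\, q]$ for the quadratic $q(Y)=\langle HY,Y\rangle$ with $H=\E[X_k\,Y\otimes Y]$ and $Y=\mathrm{Proj}_E X$ (here $E$ is the span of the $v_i$ with $\lambda_i\le u$), apply Cauchy--Schwarz $S\le \lambda_k^{1/2}\var(q)^{1/2}$, and control $\var(q)$ by a Poincar\'e inequality together with $\E|2HY|^2=4\Tr(H^2\cov(Y))\le 4u\,S$. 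So far this is exactly what the paper does.

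The gap is in the Poincar\'e step. Plain Brascamp--Lieb gives only $C_P(Y)\le t^{-1}$, and running your bootstrap with that yields $S\le 4t^{-1}u\,\lambda_k$, not the stated $4t^{-1/2}u^{3/2}\lambda_k$. In the regime that matters downstream ($u$ bounded, $t$ small, so $ut\ll 1$) the paper's bound is strictly stronger, and the $1/\sqrt t$ (rather than $1/t$) in front of the $D^2$ term in Lemma~\ref{lem_key} is exactly what allows $D$ to grow like $t^{-1/4}$ in the proof of Proposition~\ref{prop_vnis}. So this is not a cosmetic constant issue. Your proposed fixes --- sharper Cauchy--Schwarz, or spending the uniform convexity only in the $v_k$ direction --- will not recover the missing factor $(ut)^{1/2}$: the quadratic $q$ is a function of $Y$, not of $X_k$, so restricting convexity to the $k$th direction does nothing for $\var(q)$. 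What the paper actually uses is the \emph{improved Lichnerowicz inequality} from~\cite{K_root}: since $Y$ is $t$-uniformly log-concave with $\|\cov(Y)\|_{op}\le u$, one has
\[
C_P(Y)\;\le\;\sqrt{u/t}.
\]
Plugging this into $\var(q)\le C_P(Y)\,\E|\nabla q|^2\le 4\sqrt{u/t}\cdot u\,S$ and then into $S\le\lambda_k^{1/2}\var(q)^{1/2}$ closes the bootstrap at $S\le 4t^{-1/2}u^{3/2}\lambda_k$, with the constant $4$ falling out exactly. That inequality is the missing idea.
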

\begin{proof} Write $E \subseteq \RR^n$ for the subspace spanned by the vectors
$v_i$ for which $\lambda_i \leq u$. Let $Proj_E$ be the orthogonal projection operator onto $E$ in $\RR^n$.
Note that
\begin{equation} 
\sum_{i,j=1}^{n}  ( \E X_i X_j X_k )^2  \mathbbm 1_{\{ \max(\lambda_i,\lambda_j) \leq u \}}
= \Tr ( H^2 )
\label{eq_1055} \end{equation}
where $H = \E \left[ X_k Y \otimes Y \right] \in \RR^{n \times n}$ and $Y = Proj_E X$.
It follows from the Pr\'ekopa-Leindler inequality that
$Y$ is also $t$-uniformly log-concave.
By the definition of the subspace $E$, we have $\Vert \cov(Y) \Vert_{op} \leq u$.
Thanks to the improved Lichnerowicz inequality from~\cite{K_root}, the Poincar\'e constant of $Y$ (denoted by $C_P (Y)$) satisfies
\[
C_P ( Y ) \leq  \sqrt{ \frac u t }.
\]
Consequently, 
\begin{equation}\label{eq_nivmrvfilf}
\begin{split}
\var ( \langle H Y, Y\rangle )
& \leq C_P(Y) \cdot  \E \vert 2 H Y \vert^2  \\
& \leq 4 t^{-1/2} u^{1/2} \cdot \Tr ( H^2 \cov (Y) ) \\
& \leq 4 t^{-1/2} u^{3/2} \cdot \Tr H^2 .
\end{split}
\end{equation}
On the other hand, since $X_k$ has mean $0$, the Cauchy-Schwarz
inequality shows that
\begin{equation} \label{eq_vommlg}
\begin{split}
\Tr ( H^2 ) & = \E X_k \langle H Y , Y \rangle  \\
& \leq ( \E X_k^2 )^{1/2} \cdot ( \var \langle H Y , Y \rangle )^{1/2} \\
& = \lambda_k^{1/2} \cdot ( \var \langle H Y , Y \rangle )^{1/2} .
\end{split}
\end{equation}
The conclusion of the lemma follows from (\ref{eq_1055}), (\ref{eq_nivmrvfilf}) and (\ref{eq_vommlg}).
\end{proof}

Recall that $\lambda_1(t) \geq \ldots \geq \lambda_n (t) > 0$ are the eigenvalues of  $A_t$.
Let 
\[ 
u_1(t),\ldots,u_n(t) \in \R^n 
\]
be a corresponding orthonormal basis of eigenvectors.
For $i,j=1,\ldots,n$ denote
\begin{equation} \label{eq_4700}
\xi_{ij}(t) = \int_{\R^n} \langle x - a_t, u_i(t)\rangle
\langle x - a_t , u_j(t)\rangle \,  (x - a_t)  \, d\mu_t(x) \,  \in \R^n,
\end{equation}
and
$$ \xi_{ijk}(t) = \int_{\R^n} \langle x - a_t, u_i(t) \rangle
\langle x - a_t, u_j(t) \rangle
\langle x - a_t , u_k(t) \rangle \, d\mu_t(x) \, \in \R.
$$
For a smooth function $f: \R \rightarrow \R$ and a symmetric matrix $A \in \RR^{n \times n}$
whose spectral decomposition is $A = \sum_{i=1}^n \lambda_i \, u_i \otimes u_i$
we set $f(A)= \sum_{i=1}^n f(\lambda_i) \, u_i \otimes u_i$. In particular
$$ \Tr f(A) = \sum_{i=1}^n f(\lambda_i) . $$
%The following
%lemma is a well-known computation in the theory of stochastic localization, see e.g. \cite[Lecture 3]{k_chen}. Well there's the stopping time... \texttt{TODO}

\begin{lemma}
For any $\mathcal C^2$-smooth function $f: [0, \infty) \rightarrow \R$
and any stopping time $\tau$ we have
\begin{equation}
\begin{split}
\frac{d}{dt} \E\Tr f(A_{t\wedge \tau})
& = \frac{1}{2} \sum_{i,j=1}^n \E  \left[  |\xi_{ij}(t)|^2
\frac{f'(\lambda_i(t)) - f'(\lambda_j(t))}{\lambda_i - \lambda_j}
\cdot \mathbbm 1_{\{t < \tau\}} \right] \\
& - \E \left[ \sum_{i=1}^n \lambda_i^2 f'(\lambda_i(t)) \cdot \mathbbm 1_{\{t < \tau\}} \right],
\end{split}
\label{eq_1016} \end{equation}
where we interpret the quotient by continuity as $f''(\lambda_i(t))$
when $\lambda_i (t) = \lambda_j (t)$.
\label{lem_1413}
\end{lemma}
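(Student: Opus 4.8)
The plan is to regard $\Psi(A):=\Tr f(A)$ as a $\mathcal C^2$ function on the space of symmetric matrices and to apply Itô's formula to $\Psi(A_t)$: the first term on the right-hand side of (\ref{eq_1016}) will appear as the second-order Itô correction, the second term as the first-order drift, and the remaining first-order martingale term will vanish after taking expectations.

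First I would record the stochastic differential equation satisfied by the covariance process. Writing $a_t=\int x\,d\mu_t$ and $A_t=\int(x-a_t)^{\otimes 2}\,d\mu_t$, the identity $d\theta_t=dB_t+a(t,\theta_t)\,dt$ from (\ref{eq_1025}), combined with (\ref{eq_1329})--(\ref{eq_1122}), yields after a direct Itô computation the classical stochastic localization relations (see \cite{eldan1} or \cite[Section 7]{KLbull})
\[
da_t=A_t\,dB_t,\qquad dA_t=-A_t^2\,dt+dN_t ,
\]
where $N=(N_t)$ is the symmetric matrix-valued martingale with $(dN_t)_{jk}=\langle\bar\xi_{jk}(t),dB_t\rangle$ and $\bar\xi_{jk}(t)=\int(x-a_t)_j(x-a_t)_k(x-a_t)\,d\mu_t\in\R^n$. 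Since $\mu$ is compactly supported and each $\mu_t$ shares its support, the processes $a_t$, $A_t$ and $\bar\xi_{jk}(t)$ are uniformly bounded; hence $N$ is a genuine $L^2$-martingale and every integral below is legitimate.

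Next I would use the classical fact (Daleckii--Krein) that $\Psi(A)=\Tr f(A)$ is $\mathcal C^2$ whenever $f\in\mathcal C^2$, with $D\Psi(A)[H]=\Tr[f'(A)H]$ and, for $A=\sum_i\lambda_i\,u_i\otimes u_i$,
\[
D^2\Psi(A)[H,H]=\sum_{i,j=1}^n\frac{f'(\lambda_i)-f'(\lambda_j)}{\lambda_i-\lambda_j}\,\langle u_i,Hu_j\rangle^2 ,
\]
the quotient being read as $f''(\lambda_i)$ when $\lambda_i=\lambda_j$ (a quick check against $f(x)=x^2$ fixes the normalization). Applying Itô's formula to $\Psi(A_t)$ with $dA_t=-A_t^2\,dt+dN_t$ and the Itô rules $(dt)^2=0$, $dt\,dB_{t,\ell}=0$, the first-order term contributes the drift $\Tr[f'(A_t)(-A_t^2)]=-\sum_i\lambda_i(t)^2f'(\lambda_i(t))$ together with the martingale increment $\Tr[f'(A_t)\,dN_t]$, while the second-order term is $\tfrac12\,D^2\Psi(A_t)[dN_t,dN_t]$. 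Writing $dN_t=\sum_\ell\bar\Xi_\ell(t)\,dB_{t,\ell}$ with $(\bar\Xi_\ell(t))_{jk}=\bar\xi_{jk,\ell}(t)$, the definition (\ref{eq_4700}) gives $\langle u_i(t),\bar\Xi_\ell(t)u_j(t)\rangle=(\xi_{ij}(t))_\ell$, hence $\sum_\ell\langle u_i(t),\bar\Xi_\ell(t)u_j(t)\rangle^2=|\xi_{ij}(t)|^2$, so the second-order term equals $\tfrac12\sum_{i,j}\frac{f'(\lambda_i(t))-f'(\lambda_j(t))}{\lambda_i(t)-\lambda_j(t)}|\xi_{ij}(t)|^2\,dt$. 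Integrating on $[0,t\wedge\tau]$, taking expectations (the stopped martingale term vanishing by optional stopping together with boundedness), rewriting $\E\int_0^{t\wedge\tau}(\cdots)\,ds=\E\int_0^t(\cdots)\mathbbm 1_{\{s<\tau\}}\,ds$, and differentiating in $t$ then yields (\ref{eq_1016}).

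The step requiring the most care is the second-order Itô term. One must resist differentiating the eigenvalues $\lambda_i(t)$ separately, since these are merely continuous --- not $\mathcal C^1$ --- at eigenvalue crossings; instead one differentiates the genuinely $\mathcal C^2$ matrix function $\Psi$, whose Hessian already packages the divided differences and stays continuous across collisions. This is precisely the reason (\ref{eq_1016}) must interpret the quotient by continuity.
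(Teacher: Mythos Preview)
Your proof is correct and follows essentially the same route as the paper: apply It\^o's formula to the $\mathcal C^2$ spectral function $\Psi(A)=\Tr f(A)$ using the SDE $dA_t=-A_t^2\,dt+\sum_\ell H_{\ell,t}\,dB_{\ell,t}$, invoke the Daleckii--Krein formula for the Hessian of $\Psi$, observe that $\sum_\ell\langle u_i,H_{\ell,t}u_j\rangle^2=|\xi_{ij}(t)|^2$, and use compact support to justify that the local martingale part is a true martingale so its stopped expectation vanishes. Your notation $\bar\Xi_\ell$ is precisely the paper's $H_{\ell,t}$, and your closing remark about avoiding direct differentiation of the eigenvalues is a welcome clarification of why the Daleckii--Krein formulation is the right one.
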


\begin{proof}
It is known that the matrix-valued process $(A_t)_{t \geq 0}$ satisfies the equation 
\begin{equation} 
d A_t =  \sum_{i=1}^n H_{i,t} d B_{i,t} - A_t^2 \, dt
\label{eq_2000} \end{equation}
where $B_{1,t},\dotsc,B_{n,t}$ are the coordinates of the
Brownian motion $(B_t)$, and where $(H_{i,t})_{t \geq 0}$ is the matrix-valued process given by
\begin{equation}\label{eq_4701}
H_{i,t} = \int_{\R^n} (x_i-a_{i,t} ) \, (x-a_t)^{\otimes 2} \, d \mu_t(x),
\end{equation}
with $a_t = (a_{t,1},\ldots, a_{t,n}) \in \RR^n$. See e.g. \cite[section 7]{KLbull} for a derivation of 
the stochastic differential equation (\ref{eq_2000}). This equation implies that for any stopping time $\tau$,
\[
d A_{t\wedge \tau} =  \mathbbm 1_{\{t<\tau\}} \cdot \left(
\sum_{i=1}^n H_{i,t} d B^i_t - A_t^2 \, dt \right).
\]
Write $\RR^{n \times n}_{symm}$ for the linear space of all symmetric
$n \times n$ matrices, equipped with the scalar product $\langle A, B \rangle = \tr[AB]$. Using It\^o's formula, we see that for any
$\mathcal C^2$-smooth function $F: \RR^{n \times n}_{symm} \rightarrow \RR$,
\begin{equation}\label{eq_finvdmsvni}
\begin{split}
d F(A_{t\wedge\tau})
& = \mathbbm 1_{\{t<\tau\}} \cdot  \sum_{i=1}^n \Tr ( \nabla F ( A_t ) H_{i,t} ) d B_{i,t}  \\
& + \mathbbm 1_{\{t<\tau\}} \left(
\frac12 \sum_{i=1}^n \nabla^2 F(A_t) ( H_{i,t}, H_{i,t} )
- \Tr ( \nabla F (A_t) A_t^2 )  \right) \, dt.
\end{split}
\end{equation}
Since $\mu$ is compactly-supported, and since $\mu_t$ has the same support
as $\mu$, the matrix-valued processes $(A_t)$ and $(H_{i,t})$ are uniformly bounded. 
Consequently, the local martingale part of the right-hand side of~\eqref{eq_finvdmsvni} is a genuine martingale, and the
absolutely-continuous part is integrable. By taking expectation
we thus get
\begin{equation}\label{eq_vfdlmhhguelti}
\frac{d}{dt} \E F(A_{t\wedge\tau})
= \E \left[ \mathbbm 1_{\{t<\tau\}} \left( \frac12 \sum_{i=1}^n  \nabla^2 F(A_t) ( H_{i,t}, H_{i,t} ) -  \Tr ( \nabla F (A_t) A_t^2 ) \right)  \right] .
\end{equation}
Consider the particular case where $F: \RR^{n \times n}_{symm} \rightarrow \RR$ takes the form
\[
F(A) = \Tr  f(A)
\]
for some smooth function $f\colon \R \to \R$. In this case, the gradient and Hessian 
of $F$  may be described explicitly.
Indeed, by the Hadamard perturbation lemma, for any symmetric matrix
$A = \sum_{i=1}^n \lambda_i \, u_i\otimes u_i \in \RR^{n \times n}$,
\begin{equation}\label{eq_rlrfrgghh}
\nabla F(A) = f'(A) = \sum_{i=1}^n f'(\lambda_i) u_i \otimes u_i.
\end{equation}
The corresponding formula 
for the Hessian of $F$ is sometimes called the Daleckii-Krein formula (e.g. \cite[Chapter V]{bhatia}). It  states that for any symmetric matrix $H \in \RR^{n \times n}$,
\begin{equation}\label{eq_ferjiferld}
\nabla^2 F ( A ) ( H , H )
= \sum_{i,j=1}^n \frac{ f'(\lambda_i) - f'(\lambda_j)}{\lambda_i - \lambda_j} \cdot \langle Hu_i,u_j\rangle^2 ,
\end{equation}
where the quotient is interpreted by continuity as $f''(\lambda_i)$ when $\lambda_i = \lambda_j$.
Recall the definitions (\ref{eq_4700}) and (\ref{eq_4701})
of $H_{i,t}$ and $\xi_{ij}(t)$ and observe that for any fixed $i,j \leq n$ we have
\[
\sum_{k=1}^n \langle H_{k,t} u_i(t) , u_j(t) \rangle^2 = \sum_{k=1}^n \vert \xi_{ijk} (t) \vert^2= \vert \xi_{ij} (t) \vert^2.
\]
Combining this with (\ref{eq_vfdlmhhguelti}), (\ref{eq_rlrfrgghh}), and (\ref{eq_ferjiferld}) yields the result.
\end{proof}
We will apply Lemma \ref{lem_1413} for a $\mathcal C^2$-smooth function
$f: [0, \infty) \rightarrow [0,\infty)$ satisfying
the following three conditions:
\begin{equation}\label{condit}
\begin{cases}
f \text{ is increasing} \\
f(x) = x^2 , & \forall x \geq r \\
f''(x) \leq D^2 f(x) , & \forall x \geq 0 
\end{cases}
\end{equation}
where $r,D > 0$ are parameters.
\begin{lemma} \label{lem_key}
Let $f: [0, \infty) \rightarrow [0,\infty)$ be a $\mathcal C^2$-function satisfying~\eqref{condit} with parameters $D>1$ and $r\in[2,3]$. Then for any stopping time $\tau$ and any fixed $t > 0$,
\begin{equation} \frac{d}{dt} \E \tr f(A_{t\wedge \tau}) \leq C \left( \frac 1t + \frac{D^2}{\sqrt{t}} \right) \cdot \E \tr f(A_{t\wedge\tau}),
\label{eq_1414}
\end{equation}
where $C > 0$ is a universal constant.
\end{lemma}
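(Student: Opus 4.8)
The plan is to feed the function $F(A)=\Tr f(A)$ into Lemma~\ref{lem_1413} and then reduce the resulting identity to a deterministic pointwise estimate. Writing $\Delta_{ij}(t)$ for the divided difference $\big(f'(\lambda_i(t))-f'(\lambda_j(t))\big)/\big(\lambda_i(t)-\lambda_j(t)\big)$, interpreted as $f''(\lambda_i(t))$ when $\lambda_i(t)=\lambda_j(t)$, and noting the representation
\[
\Delta_{ij}(t)=\int_0^1 f''\big((1-s)\lambda_i(t)+s\lambda_j(t)\big)\,ds,
\]
Lemma~\ref{lem_1413} with $F=\Tr f$ gives
\[
\frac{d}{dt}\E\Tr f(A_{t\wedge\tau}) = \E\!\left[\mathbbm 1_{\{t<\tau\}}\Big(\tfrac12\sum_{i,j=1}^n |\xi_{ij}(t)|^2\,\Delta_{ij}(t) - \sum_{i=1}^n \lambda_i(t)^2 f'(\lambda_i(t))\Big)\right].
\]
The drift term $-\sum_i\lambda_i(t)^2 f'(\lambda_i(t))$ is nonpositive since $f$ is increasing; moreover $\mu_t$ is always $t$-uniformly log-concave and $A_t\le \id/t$ by the log-concave Lichnerowicz inequality~\eqref{eq_1817}, so $\lambda_i(t)\le 1/t$ for every $i$. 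Since $\Tr f(A_{t\wedge\tau})\ge 0$, it therefore suffices to prove the pointwise bound
\[
\tfrac12\sum_{i,j=1}^n |\xi_{ij}|^2\,\Delta_{ij} - \sum_{i=1}^n \lambda_i^2 f'(\lambda_i) \le C\Big(\tfrac1t+\tfrac{D^2}{\sqrt t}\Big)\Tr f(A)
\]
for every $t>0$ and every centered, $t$-uniformly log-concave probability measure on $\RR^n$, where $A$ is its covariance (eigenvalues $\lambda_1\ge\dots\ge\lambda_n$, necessarily at most $1/t$, and third-moment tensors $\xi_{ij},\xi_{ijk}$ as around~\eqref{eq_4700}); applying this to the recentering of $\mu_t$ and taking expectations then yields~\eqref{eq_1414}.

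The next step is to control the divided differences via the three properties~\eqref{condit} of $f$. If $\min(\lambda_i,\lambda_j)\ge r$ then the segment $[\lambda_j,\lambda_i]$ lies in $[r,\infty)$, where $f(x)=x^2$, so $\Delta_{ij}=2$. In general, since $f\ge 0$ is increasing,
\[
\Delta_{ij}\le \sup_{[0,\max(\lambda_i,\lambda_j)]} f'' \le D^2\,f\big(\max(\lambda_i,\lambda_j)\big)\le D^2\big(f(\lambda_i)+f(\lambda_j)\big);
\]
in particular $\Delta_{ij}\le 9D^2$ whenever $\max(\lambda_i,\lambda_j)\le 3$, because $r\le 3$ and $f(3)=9$.

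With these bounds I would split $\sum_{i,j}|\xi_{ij}|^2\Delta_{ij}=\sum_{i,j,k}\xi_{ijk}^2\,\Delta_{ij}$ according to whether $\max(\lambda_i,\lambda_j)\le 3$ or $>3$ (the latter being possible only when $t<1/3$). For the bounded part one applies Guan's tensor inequality Lemma~\ref{lem_guan} with $u=3$ to the measure at hand, obtaining $\sum_{i,j}\xi_{ijk}^2\mathbbm 1_{\{\max(\lambda_i,\lambda_j)\le 3\}}\le 4\cdot 3^{3/2}t^{-1/2}\lambda_k$; summing over $k$ and using $\Delta_{ij}\le 9D^2$ bounds this part by $C D^2 t^{-1/2}\Tr A$, after which one compares $\Tr A$ with $\Tr f(A)$ (a clean comparison once one uses the lower bound $f\ge c_0>0$ provided by the construction of $f$, splitting off the eigenvalues $\lambda_i\ge r$ via $\lambda_i\le f(\lambda_i)/r$). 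For the large and mixed pairs, where $f$ agrees with $x^2$ on the large eigenvalues, the relevant quantities are exactly of the $\Tr A^2$-type controlled by Guan in the proof of~\eqref{eq_guan}: one exploits $\lambda_i\le 1/t$ together with the negative drift term $-\sum_i\lambda_i^2 f'(\lambda_i)$, which on the large eigenvalues equals $-2\sum\lambda_i^3$ and is precisely the quantity Guan's argument uses to dominate the quadratic-variation contribution, and this produces the $Ct^{-1}\Tr f(A)$ summand. The two terms in the coefficient then have transparent origins: $D^2/\sqrt t$ is the product of the Hessian bound $f''\le D^2 f$ with the factor $t^{-1/2}$ in Lemma~\ref{lem_guan} (which itself rests on the improved Lichnerowicz inequality of~\cite{K_root}), while $1/t$ comes from $t$-uniform log-concavity through $\lambda_i\le 1/t$.

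The main obstacle is the large-eigenvalue regime, and the mixed pairs with one large and one small eigenvalue. Naive Cauchy--Schwarz estimates on the third-moment tensor $\xi_{ijk}$ are far too lossy here --- they introduce spurious factors of the dimension $n$ or extra powers of $1/t$ --- so one has to follow Guan's intricate analysis of the covariance process in~\cite{guan} rather closely, using simultaneously the exact identity $f=x^2$ on $[r,\infty)$, the sign of the drift, and the structure of the stochastic differential equation~\eqref{eq_2000}, in order to land precisely on the coefficient $C(1/t+D^2/\sqrt t)$. The only genuinely new ingredient relative to~\cite{guan} is the stopping time $\tau$, but this is harmless: the indicator $\mathbbm 1_{\{t<\tau\}}$ merely truncates the evolution, the bound $A_{t\wedge\tau}\le\id/t$ and the $t$-uniform log-concavity of $\mu_{t\wedge\tau}$ survive the truncation, and the drift term remains nonpositive.
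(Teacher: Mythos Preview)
Your reduction via Lemma~\ref{lem_1413} and the observation that the drift term is nonpositive are exactly right, and your divided-difference bounds are correct. But the way you handle the ``bounded'' regime $\max(\lambda_i,\lambda_j)\le 3$ has a genuine gap. You replace $\Delta_{ij}$ by the uniform constant $9D^2$, apply Lemma~\ref{lem_guan} with $u=3$, and arrive at $CD^2t^{-1/2}\Tr A$. You then assert that $\Tr A$ can be compared to $\Tr f(A)$ using a universal lower bound $f\ge c_0>0$. No such bound is part of the hypotheses~\eqref{condit}, and the functions $f_{D,r}$ actually used later satisfy $f_{D,r}(0)=e^{-Dr}$, which tends to zero as $D\to\infty$. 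Concretely, if many eigenvalues sit near~$1$ then $\Tr A$ is of order~$n$ while $\Tr f(A)$ can be of order $ne^{-D}$, so your inequality would force the constant $C$ in~\eqref{eq_1414} to depend on~$D$. The fix---and this is what the paper does---is not to pass to the uniform bound $9D^2$ but to keep the sharper estimate $\Delta_{ij}\le D^2 f(\max(\lambda_i,\lambda_j))$ and carry the factor $f(\lambda_i)$ through the application of Lemma~\ref{lem_guan}. The lemma then returns a factor $\lambda_i$, which is harmlessly $\le r+1\le 4$ in this regime, and one lands directly on $CD^2t^{-1/2}\Tr f(A)$.

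Your treatment of the large-eigenvalue regime is also off. You say one must ``follow Guan's intricate analysis'' and that the negative drift $-\sum_i\lambda_i^2 f'(\lambda_i)$ is ``precisely the quantity Guan's argument uses to dominate the quadratic-variation contribution.'' In fact the paper simply discards the drift and handles the large eigenvalues by a short, self-contained computation: when $\min(\lambda_i,\lambda_j)\ge r$ one has $\Delta_{ij}=2$; when $\lambda_i\ge r+1$ and $\lambda_j\le r$ one has $\Delta_{ij}\le f'(\lambda_i)/(\lambda_i-\lambda_j)\le 2\lambda_i/(\lambda_i-\lambda_j)\le 8$; and the resulting sum $\sum_{i,j}|\xi_{ij}|^2\mathbbm 1_{\{\lambda_i\ge r\}}$ is bounded via Lemma~\ref{lem_guan} with $u=\lambda_i$, after using symmetry of $\xi_{ijk}$ to arrange $\max(\lambda_j,\lambda_k)\le\lambda_i$, together with $\lambda_i\le 1/t$. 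This yields the $C/t$ term directly---no bootstrap, no use of the drift.
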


\begin{proof} Apply Lemma \ref{lem_1413}.
The second summand on the right-hand side of (\ref{eq_1016}) is non-positive and may therefore be ignored. 
To prove the lemma, it is enough to show that for any fixed $t > 0$, almost surely
\begin{equation}\label{eq_goal}
\sum_{i,j=1}^n
|\xi_{ij}(t)|^2 \frac{f'(\lambda_i (t)) - f'(\lambda_j(t))}
{\lambda_i (t) - \lambda_j (t)}
\leq C \left( \frac 1t + \frac{D^2}{\sqrt{t}} \right) \cdot \sum_{i=1}^n
f(\lambda_i(t)).
\end{equation}
Indeed, since $f$ is non-negative, for all $t > 0$ and $1 \leq i \leq n$, almost surely,
\[
f ( \lambda_i (t) ) \mathbbm 1_{\{t<\tau\}} \leq f( \lambda_i (t\wedge \tau) ). 
\]
Therefore, by multiplying~\eqref{eq_goal}
by $\mathbbm 1_{\{t < \tau\}}$ and taking expectation we obtained the desired inequality (\ref{eq_1414}).

\medskip Inequality~\eqref{eq_goal} is established in \cite{guan},
but for completeness we recall the argument. We omit the dependence
in $t$ in order to lighten notation.
Observe  that since $f'(x) = 2x$ when $x \geq r$, we have
\[
\begin{split}
\sum_{i,j=1}^n \frac{ f'(\lambda_i)-f'(\lambda_j) }{ \lambda_i -\lambda_j }
|\xi_{ij}|^2 \mathbbm  1_{\{ \min ( \lambda_i, \lambda_j ) \geq r   \} }
& = 2 \sum_{i,j} \vert \xi_{ij}\vert^2
 \mathbbm  1_{\{ \min ( \lambda_i , \lambda_j ) \geq r \} } \\
 & \leq 2 \sum_{i,j} \vert \xi_{ij}\vert^2 \mathbbm 1_{\{\lambda_i \geq r \}} .
\end{split}
\]
Additionally, 
\[
\begin{split}
\sum_{i,j=1}^n \frac{ f'(\lambda_i)-f'(\lambda_j) }{ \lambda_i -\lambda_j }
|\xi_{ij}|^2 \mathbbm  1_{\{ \lambda_i \geq r+1\}} \mathbbm 1_{\{ \lambda_j \leq r \} }
& \leq \sum_{i,j} \frac{ 2\lambda_i  }{ \lambda_i - \lambda_j }  \vert \xi_{ij} \vert^2
\mathbbm  1_{\{ \lambda_i \geq r+1\}} \mathbbm 1_{\{ \lambda_j \leq r \} }  \\
& \leq 8 \sum_{i, j} \vert\xi_{ij} \vert^2  \mathbbm 1_{\{ \lambda_i \geq r \} }.
\end{split}
\]
Here we  used the fact that $\lambda_i/(\lambda_i-\lambda_j) \leq r+1\leq 4$ when $\lambda_j \leq r$ and $\lambda_i \geq r+1$.
Moreover, since  $\xi_{ijk}$ is symmetric in $i,j$ and $k$, 
\[
\begin{split}
\sum_{i,j=1}^n \vert \xi_{ij} \vert^2  \mathbbm  1_{\{ \lambda_i \geq r \} }
& =  \sum_{i,j,k}  \xi_{ijk}^2  \mathbbm  1_{\{ \lambda_i \geq r \} } \\
& \leq 3   \sum_{i,j,k}  \xi_{ijk}^2  \mathbbm  1_{\{ \lambda_i \geq r \} } \mathbbm 1_{\{\max(\lambda_j , \lambda_k) \leq \lambda_i \}} \\
& \leq \frac{12}{\sqrt t} \sum_{i} \lambda_i^{5/2}
\mathbbm  1_{\{ \lambda_i \geq r \} } \\
& \leq \frac{12}{t} \sum_i \lambda_i^2  \mathbbm  1_{\{ \lambda_i \geq r \} } \\
& \leq \frac {12}t \sum_i f( \lambda_i ) .
\end{split}
\]
Here we used Lemma~\ref{lem_guan} (with $u=\lambda_i$)
and the fact that $\lambda_i \leq t^{-1}$. The application of 
Lemma~\ref{lem_guan} is legitimate since the probability measure $\mu_t$ is $t$-uniformly log-concave.
To summarize, thus far we have shown that the contribution to the left-hand side of \eqref{eq_goal}
of the indices $i,j$ for which
either both $\lambda_i$ and $\lambda_j$ are larger than $r$, or
else one of the two is less than $r$ and the other larger that $r+1$, is at most
$$ C \sum_{i, j} \vert\xi_{ij} \vert^2  \mathbbm 1_{\{ \lambda_i \geq r \} } \leq \frac{\tilde{C}}{t} \sum_{i=1}^n f(\lambda_i). $$
All other pairs of indices $i,j$ satisfy $\max (\lambda_i, \lambda_j ) \leq r+1$. By symmetry, it suffices to bound the contribution to the left-hand side of (\ref{eq_goal}) 
of all $i,j$ for which $\lambda_j \leq \lambda_i \leq r+1$.
Using~\eqref{condit} and
the fact that $f$ is increasing, we obtain
\begin{equation}\label{eqtrrr}
\begin{split}
\sum_{i,j=1}^n & \frac{ f'(\lambda_i)-f'(\lambda_j) }{ \lambda_i -\lambda_j }
|\xi_{ij}|^2 \mathbbm  1_{ \{  \lambda_j \leq \lambda_i \leq r+1 \} } \\
& \leq D^2 \sum_{i,j}  f(\lambda_i) \vert \xi_{ij} \vert^2 \mathbbm  1_{ \{ \lambda_j \leq \lambda_i \leq r+1 \} }   \\
& \leq  D^2 \sum_{i,j,k} f(\lambda_i) \xi_{ijk}^2 \mathbbm  1_{ \{ \max(\lambda_i,\lambda_j,\lambda_k) \leq r+1 \} } \\
& \qquad + D^2 \sum_{i,j,k} f(\lambda_i) \xi_{ijk}^2 \mathbbm  1_{ \{ \max(\lambda_i,\lambda_j)\leq r+1 \leq \lambda_k \} }.
\end{split}
\end{equation}
By Lemma~\ref{lem_guan} applied with $u = r+1$, and recalling that $r \leq 3$,
\[
\begin{split}
\sum_{i,j,k} f(\lambda_i) \xi_{ijk}^2
\mathbbm  1_{ \{ \max(\lambda_i,\lambda_j,\lambda_k) \leq r+1 \} }
& \leq \frac{4}{\sqrt{t}}
\sum_{i} f(\lambda_i)  \cdot (r+1)^{3/2} \lambda_i  \cdot \mathbbm  1_{ \{ \lambda_i \leq r+1 \} }  \\
& \leq \frac{C}{\sqrt t} \sum_{i=1}^n f ( \lambda_i ) .
\end{split}
\]
In order to bound the second term on the right-hand side of~\eqref{eqtrrr} we 
use that $f(\lambda_i) \leq f(r+1) = (r+1)^2\leq 16$ and then apply Lemma~\ref{lem_guan}
with $u = r+1$. We get
\[
\begin{split}
\sum_{i,j,k} f(\lambda_i ) \xi_{ijk}^2 \mathbbm 1_{\{\max(\lambda_i,\lambda_j)\leq r+1 \leq \lambda_k \} }
&\leq \frac{C'}{\sqrt{t}}  \sum_k \lambda_k
\mathbbm  1_{ \{ \lambda_k \geq r+1 \} } \\
& \leq \frac{C'}{\sqrt{t}}  \sum_k \lambda_k^2
\mathbbm  1_{ \{ \lambda_k \geq r+1 \} } \\
& \leq \frac{C'}{\sqrt{t}}  \sum_{k=1}^n f(\lambda_k) .
\end{split}
\]
This finishes the proof of (\ref{eq_goal}). 
\end{proof}
Lemma \ref{lem_key} is more flexible than Lemma 8 from Chen \cite{chen}, where functions of the form $f(t) = t^q$ are considered. Similarly to Guan \cite{guan}, we will apply 
Lemma \ref{lem_key} for the following family of functions. For
$D > 1$ and $r \in [2,3]$ we let $f_{D,r}: [0, \infty) \rightarrow \RR$ be a $\mathcal C^2$-smooth, positive, increasing function such that
\begin{equation}\label{eq_defffff}
f(x) = f_{D,r} (x) = \begin{cases}
 \e^{D(x - r)} & x \leq r- D^{-1} \\
 x^2  & x \geq r ,
 \end{cases}
\end{equation}
and
\begin{equation} \label{eqderiv}
f'' (x) \leq (12 D)^2 \cdot f(x) , \qquad \forall x \geq 0 .
\end{equation}

\begin{lemma} For any $D > 1$ and $r \in [2,3]$ there exists a $\mathcal C^2$-smooth, positive, increasing function $f = f_{D,r}: [0, \infty) \rightarrow \RR$ satisfying 
(\ref{eq_defffff}) and (\ref{eqderiv}). \label{lem_1444}
\end{lemma}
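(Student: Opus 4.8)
The plan is to look for $f$ in the form $f=e^{g}$ with $g\colon[0,\infty)\to\RR$ a $\mathcal C^{2}$ function. Then $f>0$ automatically, $f'=g'e^{g}$ and $f''=\bigl(g''+(g')^{2}\bigr)e^{g}$, so $f$ is increasing as soon as $g'\ge0$, and the bound (\ref{eqderiv}) becomes
\[
g''(x)+g'(x)^{2}\le(12D)^{2}\qquad\text{for every }x\ge0 .
\]
The two prescribed pieces of (\ref{eq_defffff}) force $g(x)=D(x-r)$ on $[0,r-D^{-1}]$, where $g'=D$, $g''=0$ and the displayed inequality reads $D^{2}\le(12D)^{2}$, and $g(x)=2\log x$ on $[r,\infty)$, where $g''+(g')^{2}=2/x^{2}\le2/r^{2}\le1/2$. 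So everything reduces to bridging $g$ across the short interval $[r-D^{-1},r]$, keeping $g'\ge0$ and the displayed bound.

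The key idea is to interpolate in logarithmic coordinates: instead of forming a convex combination of $e^{D(x-r)}$ and $x^{2}$, combine their logarithms $D(x-r)$ and $2\log x$, which on the bridge differ only by the $O(1)$ quantity $2\log r+1$. Fix once and for all a non-increasing $\chi\in\mathcal C^{2}(\RR)$ with $0\le\chi\le1$, $\chi\equiv1$ on $(-\infty,-1]$, $\chi\equiv0$ on $[0,\infty)$, chosen so that $\|\chi'\|_{\infty}$ and $\|\chi''\|_{\infty}$ are suitable small absolute constants; this is possible because the transition takes place over an interval of length one (a $\mathcal C^{2}$ ``smoothstep'' does it). Now set, for $x>0$,
\[
g(x)=\chi\bigl(D(x-r)\bigr)\,D(x-r)+\Bigl(1-\chi\bigl(D(x-r)\bigr)\Bigr)\,2\log x ,
\]
with the convention that the second summand is $0$ wherever $1-\chi$ vanishes, so that $g(x)=D(x-r)$ for $x\le r-D^{-1}$ and $g(x)=2\log x$ for $x\ge r$. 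Since $\chi$ is flat to second order at $-1$ and at $0$, the function $g$ is $\mathcal C^{2}$ on $[0,\infty)$, so $f:=e^{g}$ is $\mathcal C^{2}$, positive, and satisfies (\ref{eq_defffff}) by construction.

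It remains to verify monotonicity and the second–order bound on the bridge $[r-D^{-1},r]$, the rest being immediate. Writing $s=D(x-r)\in[-1,0]$ and differentiating,
\[
g'(x)=D\chi'(s)\bigl(s-2\log x\bigr)+D\chi(s)+\bigl(1-\chi(s)\bigr)\tfrac{2}{x},
\]
\[
g''(x)=D^{2}\chi''(s)\bigl(s-2\log x\bigr)+2D^{2}\chi'(s)-\tfrac{4D\chi'(s)}{x}-\tfrac{2\bigl(1-\chi(s)\bigr)}{x^{2}} .
\]
Because $r-D^{-1}>r-1\ge1$ we have $x>1$ and $s\le0<2\log x$; together with $\chi'\le0$ this makes the first summand of $g'$ nonnegative and the other two positive, so $g'>0$ and $f$ is strictly increasing. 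On the bridge one has $|s|\le1$, $|s-2\log x|\le1+2\log3<3.2$ and $0<1/x<1$ (using $r\le3$, $D>1$, $1<x\le3$); hence $|g'|\le(3.2\|\chi'\|_{\infty}+3)D$ and $g''\le(3.2\|\chi''\|_{\infty}+6\|\chi'\|_{\infty}+2)D^{2}$, so
\[
g''+(g')^{2}\le\bigl(3.2\|\chi''\|_{\infty}+6\|\chi'\|_{\infty}+2+(3.2\|\chi'\|_{\infty}+3)^{2}\bigr)D^{2}.
\]
With $\chi$ chosen so that $\|\chi'\|_{\infty}$ is close to its optimal value (around $1.3$–$2$) and $\|\chi''\|_{\infty}$ is moderate — a value near $11$ is comfortably attainable — the bracket is about $100<144=(12)^{2}$, so $g''+(g')^{2}\le(12D)^{2}$ and thus $f''=\bigl(g''+(g')^{2}\bigr)f\le(12D)^{2}f$, which is (\ref{eqderiv}).

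The one genuinely delicate point is making these constants close up, and that is exactly what the logarithmic interpolation buys: an arithmetic interpolation $\chi e^{D(x-r)}+(1-\chi)x^{2}$ would place a term of order $D^{2}\|\chi''\|_{\infty}x^{2}$ in $f''$ competing against an $f$ that can be as small as $e^{-1}$, forcing an impossibly small bound on $\|\chi''\|_{\infty}$; in log coordinates the analogous term is only of order $D^{2}\|\chi''\|_{\infty}\cdot(2\log r+1)$ against an $f$ bounded below by $e^{-1}$, and the margin to $(12D)^{2}$ is ample. Everything else — the $\mathcal C^{2}$ gluing at $r-D^{-1}$ and $r$, positivity, monotonicity — is routine.
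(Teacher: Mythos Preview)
Your proof is correct and takes a genuinely different route from the paper. The paper interpolates the \emph{derivative} of $f$: writing $r_0=r-D^{-1}$, it sets $f(x)=e^{-1}+D\int_0^{x-r_0}h(Dt)\,dt$ on $[r_0,r]$ for a positive $\mathcal C^1$, $40$-Lipschitz function $h$ on $[0,1]$ with prescribed boundary data $h(0),h(1),h'(0),h'(1)$ and a prescribed integral; the existence of such $h$ is obtained by a short connectedness argument (find $h_0,h_1$ whose integrals bracket the target). You instead interpolate $\log f$ via a smoothstep cutoff, which is more explicit and avoids any existence argument: the quintic smoothstep has $\|\chi'\|_\infty=15/8$ and $\|\chi''\|_\infty=10\sqrt3/3\approx 5.77$, and plugging these into your bracket gives roughly $113<144$, so (\ref{eqderiv}) holds with room to spare. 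The trade-off is that your argument requires tracking the numerical constants carefully (and your write-up is a bit loose there --- it would be cleaner to name the quintic smoothstep and compute the bracket explicitly), whereas the paper's argument hides the constants in the Lipschitz bound $L=40$ and a qualitative existence step. Both approaches exploit the same underlying fact: on the bridge the two prescribed logarithms differ only by $O(1)$, so a rescaled interpolation over $[r-D^{-1},r]$ introduces at most two extra powers of $D$ in $f''/f$.
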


\begin{proof} Set $r_0 = r - D^{-1}$ and $L = 40$. We claim that there exists a positive, $\mathcal C^1$-smooth, $L$-Lipschitz function $h: [0, 1] \rightarrow \RR$
such that 
\begin{equation}  h(0) = 1/e, h(1) = 2 r/D, h'(0) = 1/e, h'(1) = 2 / D^2  \label{eq_2250} 
\end{equation}
and 
$$\int_{0}^1 h(t) dt =  r^2 - 1/e. 
$$
Once we find such a function $h$, we define for $x \in [r_0, r]$,
$$ f(x) = 1/e + D \int_{0}^{x-r_0} h \left( D t \right) dt, $$
while for $x \not \in [r_0, r]$ we define $f(x)$ according to (\ref{eq_defffff}). Observe that $f$ is a positive, increasing, $\mathcal C^2$-function
satisfying (\ref{eq_defffff}). The function $f$ clearly satisfies (\ref{eqderiv}) for all $x \not \in [r_0, r]$, 
while for $x \in [r_0, r]$, it satisfies (\ref{eqderiv}) since
 $$ f''(x)  \leq D^2 L \leq 120 D^2 / e \leq 120 D^2 f(x). $$
We still need to find a function $h$ satisfying the above properties. Write $\cH$ for the collection of all $\mathcal C^1$-smooth, positive, $L$-Lipschitz functions $h$ satisfying (\ref{eq_2250}).
Then $\cH$ is a convex set, and hence the range of the map $\cH \ni h \mapsto \int_0^1 h$ is an interval.
It thus suffices to find $h_0, h_1 \in \cH$ with 
\begin{equation}  \int_0^1 h_0 < r^2 - 1/e < \int_0^1 h_1. \label{eq_632} \end{equation}
In fact, by approximation, it suffices to find non-negative $L$-Lipschitz functions $h_0$ and $h_1$ 
satisfying (\ref{eq_632}) with $h_i(0) = 1/e$ and $h_i(1) = 2r/D$ for $i=0,1$. Recall that $L = 40, D > 1$ and $r \in [2,3]$.
The construction of $h_0$ and $h_1$ is now an elementary exercise.
\end{proof}

We are now in a position to prove Proposition \ref{prop_vnis}.
\begin{proof}[Proof of Proposition~\ref{prop_vnis}]
It suffices to treat the case $t < c$, where $c > 0$ is a universal constant. Fix $t\leq 2^{-8}$,
and for an integer $k \geq 0$ denote 
 $$ t_k = 2^{-8k} t. $$ 
 For $k \geq 0$ set
$$ D_k = t_k^{-1/4}, $$ and note 
 that $D_k >1$. 
Define a sequence $(r_k)_{k \geq 0}$ by
\begin{equation}\label{eq_defri}
r_0 = 3  , \qquad r_{k+1} = r_{k} - t_k^{1/8} , \; k \geq 0 .
\end{equation}
Since $t\leq 2^{-8}$,
\[
\sum_{k=0}^{\infty} t_k^{1/8} = \sum_{k=0}^{\infty} 2^{-k} t^{1/8} = 2 t^{1/8} \leq 1.
\]
From (\ref{eq_defri}) we thus see that $r_k \in [2,3]$ for all $k \geq 0$. 
Consider the function $f_k = f_{D_k,r_k}$ provided by Lemma \ref{lem_1444}.
Apply Lemma~\ref{lem_key} for the function $f_{k}$ and $D = 12 D_k$. 
Observe that for $s \in [t_{k+1},t_k]$ we have
$s \leq t_k = D_k^{-4}$ and hence 
\[
\frac{D_k^2}{\sqrt s} \leq \frac 1s .
\]
Lemma~\ref{lem_key} thus shows that
\[
\frac{d}{ds} \E  \Tr f_k ( A_{s\wedge \tau} )
\leq \frac {C_1} s \E \Tr f_k ( A_{s\wedge \tau} ) ,
\quad  \forall  s \in [t_{k+1},t_k] ,
\]
where $C_1 > 0$ is a universal constant.
Integrating this differential inequality yields
\begin{equation}\label{eq_step000}
\E \Tr f_{k} ( A_{t_{k}\wedge \tau} ) \leq  \left( \frac{t_k}{t_{k+1}} \right)^{C_1}
\E \Tr f_{k} (A_{t_{k+1} \wedge \tau} )  .
\end{equation}
Consider the function $g_k$ defined by
\[
g_k (x) = x^2 \mathbbm 1_{\{x \geq r_k \} }.
\]
Since $f_k = f_{D_k, r_k}$, we see from (\ref{eq_defffff}) that 
\begin{equation}\label{eq_tttoyoy}
g_k \leq f_k.
\end{equation}
We claim that 
\begin{equation}\label{eq_yonbe}
f_k \leq \frac 94  g_{k+1} + \exp(-t_{k}^{-1/8} ).
\end{equation}
Let us prove (\ref{eq_yonbe}). Since $t_k \leq 1$ and $D_k = t_k^{-1/4}$ we have
\[
r_{k+1} = r_{k} - t_k^{1/8} \leq r_k - t_k^{1/4} = r_k - D_k^{-1} .
\]
Therefore, if $x \leq r_{k+1}$ then by (\ref{eq_defffff}),
\[
f_{k} (x) \leq f_k (r_{k+1} ) = \exp (D_k (r_{k+1}-r_k))
= \exp(- t_k^{-1/4} t_k^{1/8} ) = \exp( -t_k^{-1/8} ).
\]
Hence (\ref{eq_yonbe}) holds true when we evaluate $f_k$ and $g_{k+1}$  at a point $x \in [0, r_{k+1}]$. 
If $x \geq r_{k}$ then $f_k(x) = x^2 = g_{k+1}(x)$
and (\ref{eq_yonbe}) holds true in this case too. Finally,  if $x \in [r_{k+1} , r_k]$ then
\[
f_k (x) \leq f_k (r_k) = r_k^2 \leq \frac{ r_k^2}{ r_{k+1}^2 } x^2 \leq \frac 94 x^2 = \frac 94 g_{k+1} (x),
\]
since $r_{k}, r_{k+1} \in [2,3]$. We have thus completed the proof of (\ref{eq_yonbe}).
By substituting ~\eqref{eq_tttoyoy} and \eqref{eq_yonbe}  into~\eqref{eq_step000}
and setting
\[
F_k = \E \Tr g_k ( A_{t_k \wedge \tau} )
\]
we  obtain
\[
F_k \leq \left( \frac{t_k}{t_{k+1}} \right)^{C_1} \left( \frac 94 F_{k+1} + n \exp ( - t_{k}^{-1/8} ) \right) .
\]
Since $t_{k}/t_{k+1} = 2^8$ and $9/4 \leq 2^2$
this inequality implies that
\begin{equation} 
F_k \leq 2^{C_2} \left( F_{k+1} + n \exp ( - t_{k}^{-1/8} ) \right) ,
\label{eq_715} \end{equation}
where $C_2 = 8C_1 + 2$. From the recursive inequality (\ref{eq_715}) we obtain that for any $k \geq 1$,
\begin{equation}\label{eq_stepPPPP}
 F_0  \leq 2^{C_2 k} F_k + n
\cdot \sum_{i=0}^{k-1}
2^{C_2 (i+1)} \exp ( - t_{i}^{-1/8}).
\end{equation}
Observe that
\[
t_i^{-1/8}  = 2^i t^{-1/8}
= (2^i-1)t^{-1/8} + t^{-1/8} \geq 2 (2^i-1) + t^{-1/8}.
\]
We thus conclude from (\ref{eq_stepPPPP}) 
and from the inequality $2^{C_2 k} \leq t_k^{-C_2}$ that for $k \geq 1$,
\begin{equation}  F_0 \leq t_k^{-C_2} F_k + n e^{-t^{-1/8}} 
\cdot \sum_{i=0}^{k-1}
2^{C_2 (i+1)} e^{ - 2 (2^i-1) }
\leq t_k^{-C_2} F_k + C_3 n \cdot e^{-t^{-1/8}},
 \label{eq_5666} \end{equation}
where the last passage follows from the fact that the series is clearly convergent. 
Next, we claim that  $t_k^{-C_2} F_k$ tends to $0$
when $k$ tends to $+\infty$.
Indeed, recall that $r_k \geq 2 $ for all $k$. Therefore,
\[
\begin{split}
F_k = \E \Tr g_k ( A _{t_k \wedge \tau} )
& \leq \E \left[ \sum_{i=1}^n \lambda_i ( t_k \wedge \tau )^2
\mathbbm 1_{\{ \lambda_i ( t_k \wedge \tau ) \geq 2 \} } \right]  \\
& \leq  \E \left[ \vert A_{t_k\wedge \tau} \vert^2
\mathbbm 1_{\{ \Vert A_{t_k\wedge \tau} \Vert_{op} \geq 2 \} }\right].
\end{split}
\]
Thus  it suffices to prove that when $t \to 0$,
\begin{equation}\label{eq_enough}
 \E \left[ \vert A_{t\wedge \tau} \vert^2
\mathbbm 1_{ \{ \Vert A_{t\wedge \tau} \Vert_{op} \geq 2 \} } \right] = o(t^{C_2}).
\end{equation}
Recall from (\ref{eq_727}) that $\vert A_t \vert \leq \tilde{C}_{\mu}$
almost surely, for some constant $\tilde{C}_{\mu}$ depending only on the 
compactly-supported measure $\mu$. It follows that there exists a constant $D_\mu > 0$ depending only on
$\mu$ such that
\begin{equation} 
\E \left[ \vert A_{t\wedge \tau} \vert^2
\mathbbm 1_{ \{ \Vert A_{t\wedge \tau} \Vert_{op} \geq 2 \} } \right]
 \leq D_\mu \cdot \prob ( \Vert A_{t\wedge \tau} \Vert_{op} \geq 2 )
\leq D_\mu \cdot \prob ( \tau_* \leq t ),
\label{eq_730} \end{equation}
where $\tau_*$ was defined in (\ref{eq_deftaustar}).
Inequality (\ref{eq_730}) combined with the qualitative estimate (\ref{eq_1738})
imply~(\ref{eq_enough}), which proves the claim. Consequently, we 
may let $k$ tend to $+\infty$ in (\ref{eq_5666}), and obtain
$$ F_0 \leq C_3  n \cdot \exp ( - t^{-1/8} ). $$
By using the inequality
\[
\mathbbm 1_{\{ x\geq 3 \} } \leq x^2 \mathbbm 1_{\{x\geq 3\}}
= g_0 (x)  ,
\]
we finally obtain
\[
\sum_{i=1}^n \prob( \lambda_i ( t\wedge \tau ) \geq 3 )
\leq  \E \Tr g_0 ( A_{t\wedge \tau} )  = F_0 \leq C_3 n \cdot
\exp(-t^{-1/8} )   ,
\]
and the proof is complete. 
\end{proof}

\section{Proofs of the main results}
\label{sec6}

We continue with the notation and assumptions of Section \ref{sec4}.
Thus $\mu$ is an isotropic, compactly-supported, log-concave probability measure in $\RR^n$.
Recall the covariance process $(A_t)_{t \geq 0}$ 
and its eigenvalues $\lambda_1 (t) \geq \dotsb \geq \lambda_n (t)>0$.  For  $k= 1, \dotsc,n$ we consider 
the stopping time
$$ 
\tau_k = \inf \{ t >0 \, ; \,  \lambda_{k}(t) \geq 3 \} .
$$
Proposition \ref{prop_vnis} admits the following corollary:

\begin{corollary} \label{cor_main}
For $k=1,\ldots,n$ and $t > 0$,
\begin{equation}\label{eq_gaaaa}
\prob ( \tau_k  \leq t ) \leq C_1 \frac nk \cdot \exp ( - t^{-\alpha} ) ,
\end{equation}  
and
\begin{equation} 
\E \tau_k^{-2} \leq C_2 \left( 1 + \log \frac nk \right)^\beta.
\label{eq_1544} \end{equation}
Here, $C_1,C_2, \alpha, \beta > 0$ are universal constants (in fact $\alpha = 1/8$ and $\beta = 16$).
\end{corollary}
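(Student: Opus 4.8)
The plan is to derive both inequalities from Proposition~\ref{prop_vnis}, applied with the stopping time $\tau = \tau_k$, together with elementary estimates on the resulting tail bound.

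For~\eqref{eq_gaaaa}, note first that since $\mu$ is isotropic we have $A_0 = \cov(\mu) = \id$, so $\lambda_k(0) = 1 < 3$; as $t \mapsto A_t$ is continuous, so is $t\mapsto \lambda_k(t)$, and consequently $\tau_k$ is a stopping time with $\tau_k > 0$ almost surely. On the event $\{\tau_k \le t\}$ one has $t \wedge \tau_k = \tau_k$, and by continuity $\lambda_k(\tau_k) \ge 3$; since the eigenvalues are listed in non-increasing order, $\lambda_i(t\wedge\tau_k) = \lambda_i(\tau_k) \ge \lambda_k(\tau_k) \ge 3$ for every $i \le k$. Hence, on $\{\tau_k\le t\}$, at least $k$ of the indicators $\mathbbm 1_{\{\lambda_i(t\wedge\tau_k)\ge 3\}}$ equal $1$, so that
\[
k\cdot\prob(\tau_k\le t)\le \E\Big[\sum_{i=1}^n\mathbbm 1_{\{\lambda_i(t\wedge\tau_k)\ge 3\}}\Big]=\sum_{i=1}^n\prob\big(\lambda_i(t\wedge\tau_k)\ge 3\big)\le C\,n\,\exp(-t^{-\alpha}),
\]
by Proposition~\ref{prop_vnis} with $\tau = \tau_k$. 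Dividing by $k$ gives~\eqref{eq_gaaaa} with $C_1 = C$.

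For~\eqref{eq_1544}, write $N = n/k \ge 1$. Since $\tau_k > 0$ almost surely, Fubini's theorem yields
\[
\E\,\tau_k^{-2}=\E\int_{\tau_k}^{\infty}2t^{-3}\,dt=\int_0^{\infty}2t^{-3}\,\prob(\tau_k\le t)\,dt.
\]
I would split this integral at the threshold $T := (1+\log N)^{-1/\alpha}$. On $[T,\infty)$ we use the trivial bound $\prob(\tau_k\le t)\le 1$, which contributes at most $\int_T^{\infty}2t^{-3}\,dt = T^{-2}=(1+\log N)^{2/\alpha}$. On $(0,T]$ we insert~\eqref{eq_gaaaa} and substitute $v = t^{-\alpha}$, which transforms the integral into
\[
\frac{2C_1 N}{\alpha}\int_{T^{-\alpha}}^{\infty}e^{-v}\,v^{2/\alpha-1}\,dv.
\]
Here $T^{-\alpha} = 1+\log N$, and the elementary Gamma-tail estimate $\int_a^{\infty}e^{-v}v^{m-1}\,dv\le C(m)\,e^{-a}(1+a)^{m-1}$, valid for $a \ge 1$ with $m = 2/\alpha$ a universal constant, combined with $N\,e^{-(1+\log N)}=e^{-1}$, bounds this contribution by $C'(1+\log N)^{2/\alpha-1}$. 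Adding the two pieces gives $\E\,\tau_k^{-2}\le C_2(1+\log N)^{2/\alpha}$, which is~\eqref{eq_1544} with $\beta = 2/\alpha = 16$ since $\alpha = 1/8$.

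The argument is routine once Proposition~\ref{prop_vnis} is available; the only point requiring a little care is the calibration of the cut-off $T$ in the second part, which must be chosen so that the exponential gain $e^{-T^{-\alpha}}$ exactly absorbs the factor $N = n/k$, leaving only the polylogarithmic factor $(1+\log N)^{2/\alpha}$ rather than a genuine power of $N$.
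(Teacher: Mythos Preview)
Your proof is correct and follows essentially the same route as the paper. For \eqref{eq_gaaaa} the argument is identical. For \eqref{eq_1544} the paper uses the change of variables $x=1/t$ in the layer-cake formula and splits at $x_0 = 2^{1/\alpha}(\log\tfrac{n}{k})^{1/\alpha}$, bounding the tail by writing $e^{-x^{\alpha}} = e^{-x^{\alpha}/2}e^{-x^{\alpha}/2}$ so that the factor $n/k$ is absorbed; your choice of cut-off $T=(1+\log N)^{-1/\alpha}$ and use of the incomplete Gamma bound achieve the same cancellation and the same exponent $\beta = 2/\alpha = 16$.
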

%
%When $k= O(1)$, 
%inequality~\eqref{eq_gaaaa} is clearly implied by~\eqref{eq_1737}.
%This corollary only provides new information when $k\gg 1$.
%
\begin{proof}
Fix $1 \leq k \leq n$, and apply Proposition~\ref{prop_vnis} for the stopping
time $\tau_k$ to obtain
\begin{equation}\label{eq_nveimi}
\sum_{i=1}^n \prob ( \lambda_i ( t\wedge \tau_k ) \geq 3 )
\leq C n \cdot \exp ( - t^{-1/8} ) .
\end{equation}
Observe that if $\tau_k \leq t$ then at time $t\wedge \tau_k = \tau_k$ the $k$
largest eigenvalues of $A_{t\wedge \tau_k}$ are greater than or equal to $3$.
This implies that
\[
\prob ( \tau_k \leq t ) \leq \prob ( \lambda_i ( t\wedge \tau_k ) \geq 3 ) , \qquad \forall i \leq k.
\]
Consequently,
\begin{equation}
k \cdot \prob ( \tau_k \leq t ) \leq \sum_{i=1}^k \prob ( \lambda_i ( t\wedge \tau_k ) \geq 3 ) \leq
\sum_{i=1}^n \prob ( \lambda_i ( t\wedge \tau_k ) \geq 3 ) .
\label{eq_738} \end{equation}
From  \eqref{eq_nveimi} and (\ref{eq_738}) we deduce ~\eqref{eq_gaaaa}. In order 
to prove (\ref{eq_1544}) we set 
\[
x_0 = 2^{1/\alpha} \left( \log  \frac nk \right)^{1/\alpha},
\]
for $\alpha = 1/8$.
By ~\eqref{eq_gaaaa}, 
\[
\begin{split}
\E \tau_k^{-2} & = \int_0^\infty 2x \cdot \prob ( \tau_k \leq x^{-1} ) \, dx \\
& \leq x_0^2 + C_1 \frac{n}{k} \int_{x_0}^\infty 2x \e^{-x^{\alpha} } \, dx  \\
& \leq x_0^2 + C_1 \cdot \left( \frac{n}{k} \e^{- \frac 12 x_0^{\alpha} } \right) \int_0^\infty 2x
\e^{- \frac 12 x^\alpha} \, dx \\
& = x_0^2 + C' ,
\end{split}
\]
which yields the desired result.
\end{proof}
We may now prove the following variant of Guan's estimate (\ref{eq_guan}).

\begin{theorem}\label{thm_guantype} 
Let $\mu$ be an isotropic, compactly-supported, log-concave probability measure in $\RR^n$. Then,
with the notation of Corollary \ref{cor_311},
\[
\E \left[
\sum_{i=1}^n \exp \left( 2 \int_0^1 \lambda_{i} (t) \, dt \right) \right]
\leq C n,
\]
where $C > 0$ is a universal constant.
\end{theorem}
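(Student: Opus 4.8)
The plan is to control the exponential moment by first establishing, for each index $i$, a pathwise upper bound on $\int_0^1 \lambda_i(t)\,dt$ obtained by breaking the time interval at the stopping time $\tau_i$. Before $\tau_i$ one has the trivial bound $\lambda_i(t) < 3$ coming from the very definition of $\tau_i$, whereas from $\tau_i$ onwards one invokes the log-concave Lichnerowicz inequality (\ref{eq_1817}), which gives $\lambda_i(t) \leq \lambda_1(t) = \|A_t\|_{op} \leq 1/t$. First I would record that $\tau_i > 0$ almost surely: since $\mu$ is isotropic, $A_0 = \cov(\mu) = \id$, so $\lambda_i(0) = 1 < 3$, and the eigenvalue processes are continuous. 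Splitting the integral at $1 \wedge \tau_i$ then yields, almost surely,
\[
\int_0^1 \lambda_i(t)\,dt \;\leq\; 3\,(1 \wedge \tau_i) + \int_{1\wedge\tau_i}^1 \frac{dt}{t} \;\leq\; 3 + \max\bigl\{\,0,\ \log(1/\tau_i)\,\bigr\},
\]
where the second integral is read as $0$ when $\tau_i \geq 1$.

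Exponentiating this bound gives $\exp\bigl(2\int_0^1 \lambda_i(t)\,dt\bigr) \leq e^{6}\max\{1, \tau_i^{-2}\} \leq e^{6}(1 + \tau_i^{-2})$ almost surely, so that after summing over $i$ and taking expectations,
\[
\E\Bigl[\,\sum_{i=1}^n \exp\Bigl(2\int_0^1 \lambda_i(t)\,dt\Bigr)\Bigr] \;\leq\; e^{6}\Bigl(n + \sum_{i=1}^n \E\,\tau_i^{-2}\Bigr).
\]
At this point the heavy lifting is already done: the bound (\ref{eq_1544}) of Corollary~\ref{cor_main} supplies $\E\,\tau_i^{-2} \leq C_2\bigl(1 + \log(n/i)\bigr)^{\beta}$ with $\beta$ a universal constant, so it remains only to sum this over $i = 1,\dots,n$.

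The final step is the elementary estimate $\sum_{i=1}^n \bigl(1 + \log(n/i)\bigr)^{\beta} \leq C n$. By convexity this reduces to $\sum_{i=1}^n (\log(n/i))^{\beta} \leq C' n$, which in turn follows by comparing the sum of the non-increasing function $x \mapsto (\log(n/x))^{\beta}$ with its integral,
\[
\sum_{i=1}^n (\log(n/i))^{\beta} \;\leq\; (\log n)^{\beta} + \int_0^{n} (\log(n/x))^{\beta}\,dx \;=\; (\log n)^{\beta} + n\,\Gamma(\beta+1),
\]
together with the trivial bound $(\log n)^{\beta} \leq C_\beta\, n$ for the boundary term. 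Combining the three displays completes the argument. I do not expect any genuine obstacle here: all the difficulty is concentrated in Proposition~\ref{prop_vnis} and Corollary~\ref{cor_main}, and the only point requiring a moment's care is the logarithmic singularity of the a priori bound $\lambda_i(t) \leq 1/t$ near $t = 0$, which is precisely why the cruder bound $\lambda_i(t) < 3$ on $[0,\tau_i]$ must be used there instead.
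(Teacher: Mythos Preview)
Your proof is correct and follows essentially the same approach as the paper's own proof: split $\int_0^1 \lambda_i(t)\,dt$ at $1\wedge\tau_i$, use $\lambda_i(t)\le 3$ before $\tau_i$ and the Lichnerowicz bound $\lambda_i(t)\le 1/t$ afterwards, exponentiate to reduce to $\E\,\tau_i^{-2}$, invoke Corollary~\ref{cor_main}, and sum. The paper's argument is identical up to cosmetic differences (it phrases the final sum as a Riemann sum for $\int_0^1(1+\log(1/x))^\beta\,dx$ rather than via the Gamma integral, and does not pause to note $\tau_i>0$).
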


\begin{proof}
Recall from (\ref{eq_1817}) that $A_t \leq t^{-1} \cdot \id$ for all $t > 0$, almost surely. Therefore $\lambda_{k}(t) \leq t^{-1}$ for
all $k$ and  $t$. Consequently, for $k=1,\ldots,n$, 
\[
\int_0^{1} \lambda_{k} (t) \, dt \leq 3 (\tau_k\wedge 1) + \int_{\tau_k \wedge 1}^1 \frac{dt}{t}
\leq 3 - \log ( \tau_k \wedge 1) .
\]
Hence,
\[
\exp \left( 2\int_0^{1} \lambda_{k}(t) \, dt \right) \leq \e^6 \cdot \left( \tau_k^{-2} \vee 1 \right)
\leq \e^6 \cdot \left( \tau_k^{-2} + 1 \right),
\]
where $a \vee b = \max \{a,b\}$. From Corollary \ref{cor_main} we thus obtain that 
\[
\E \left[ \sum_{i=1}^n \exp \left( 2 \int_0^1 \lambda_{i}(t) dt \right) \right]  \leq C' \sum_{k=1}^n \left(  1 + \log \frac nk \right)^\beta \leq \tilde{C} n,
\]
with $\beta = 16$, where the last passage follows from the fact that the monotone function $\left( 1+ \log \frac 1x \right)^\beta$ is integrable in the interval $[0,1]$, and consequently the Riemann sum
corresponding to this integral is bounded by a universal constant.
\end{proof}

%Recall that $A_t \leq t^{-1} \id$, almost surely, so this bound
%already leads to the estimate
%\[
%\E \Vert A_t\Vert^2_{HS}  \leq 2n + \frac n{t^2} \prob( \Vert A_t\Vert_{op} \geq 2 ) \leq Cn,
%\]
%for some universal constant $C$ and for every $t\leq c \log^{-2} n$.
%For larger values $t$ we will apply Lemma~\ref{lem_key}. It is relatively
%easy to see that one application of the lemma already leads to the
%bound
%\[
%\E \Vert A_t\Vert^2_{HS}  \leq Cn , \quad \forall t \leq \frac c{\log \log n}.

As explained in the Introduction, Theorem \ref{thm1} 
is a consequence of Theorem~\ref{thm2}.

\begin{proof}[Proof of Theorem~\ref{thm2}]
In the case where $\mu$ is compactly-supported, 
Corollary \ref{cor_311} and Theorem \ref{thm_guantype} imply that
\begin{equation}\label{eq_jjjj}
\sum_{i=1}^n \Vert x_i\Vert_{H^{-1} (\mu)}^2 \leq Cn.
\end{equation}
We need to eliminate the assumption that $\mu$ is compactly-supported.
Consider the space $\mathcal X$ of all isotropic, log-concave probability measures on $\RR^n$,
equipped with the topology of weak convergence, i.e.,  the minimal topology under which
$\mu \mapsto \int \vphi d \mu$ is a continuous functional, for any continuous, compactly-supported 
function $\vphi: \RR^n \rightarrow \RR$. In particular, for any 
$\vphi \in \mathcal C_c^{\infty}(\RR^n)$ and $i=1,\ldots,n$, the functional 
\[
\mu \mapsto \int_{\R^n} [ 2 x_i \phi (x) -  \vert \nabla \phi (x)\vert^2  ] \, d \mu (x)
\]
is continuous in $\cX$. Observe that we may rewrite  (\ref{eq_1528}) as 
\[
\Vert x_i\Vert^2_{H^{-1} (\mu)} =
\sup_{\phi \in \mathcal C^\infty_c (\R^n)}
\left\{ \int_{\R^n} [ 2 x_i \phi (x) -  \vert \nabla \phi (x)\vert^2    ] \, d \mu (x) \right\}.
\]
This implies that $\mu \mapsto \Vert x_i\Vert^2_{H^{-1} (\mu)}$ is lower semi-continuous in $\mathcal X$.
Therefore (\ref{eq_jjjj}) holds true for any $\mu$ in the closure in $\cX$ of  the collection 
of compactly-supported measures. 

\medskip All that remains is to show that any isotropic, log-concave probability measure 
is the weak limit of a sequence of compactly-supported, isotropic, log-concave probability measures. 
This is a standard fact, which may be proved as follows: Let $\mu$ be an isotropic, log-concave probability measure in $\RR^n$. Write $\nu_k$ for the conditioning of $\mu$ on the ball of radius $k$ centered at the origin.
Write $b_k \in \RR^n$ for the barycenter of $\nu_k$ and set $$ T_k(x) = \cov(\nu_k)^{-1/2}(x - b_k), \qquad x \in \RR^n, k \geq 1. $$
Let $\mu_k$ be the push-forward of $\nu_k$ under $T_k$, which is a compactly-supported, isotropic, log-concave probability measure.
Clearly  $T_k(x) \longrightarrow  x$ for any $x \in \RR^n$, and the convergence is locally-uniform. Therefore $\mu_k \longrightarrow  \mu$
in the topology of weak convergence of measures. This completes the proof.
\end{proof}

\medskip
\noindent Department of Mathematics,
Weizmann Institute of Science,
Rehovot 76100, Israel. \\
{\it e-mail:} \verb"boaz.klartag@weizmann.ac.il"

\medskip
\noindent Universit\'e de Poitiers, CNRS, LMA, Poitiers, France. \\
{\it e-mail:} \verb"joseph.lehec@univ-poitiers.fr"

\end{document}